\newcommand\la{\leftarrow}
\newcommand\id{\mathrm{id}}
\newcommand\ten{\otimes}
\newcommand\eps{\epsilon}
\newcommand\CC{\mathrm{C}}
\newcommand\Th{\mathrm{Th}\,}
\newcommand\CCC{\mathrm{CC}}
\renewcommand\H{\mathrm{H}}
\newcommand\z{\mathrm{Z}}
\newcommand\HH{\mathrm{HH}}
\newcommand\Z{\mathbb{Z}}
\newcommand\Q{\mathbb{Q}}
\newcommand\Ql{\mathbb{Q}_{\ell}}
\newcommand\R{\mathbb{R}}
\newcommand\Cx{\mathbb{C}}
\newcommand\bA{\mathbb{A}}
\newcommand\bD{\mathbb{D}}
\newcommand\bG{\mathbb{G}}
\newcommand\bO{\mathbb{O}}
\newcommand\C{\mathcal{C}}
\newcommand\cA{\mathcal{A}}
\newcommand\cB{\mathcal{B}}
\newcommand\cD{\mathcal{D}}
\newcommand\cF{\mathcal{F}}
\newcommand\cG{\mathcal{G}}
\newcommand\cM{\mathcal{M}}
\newcommand\cN{\mathcal{N}}
\newcommand\cP{\mathcal{P}}
\newcommand\cR{\mathcal{R}}
\newcommand\cS{\mathcal{S}}
\newcommand\cT{\mathcal{T}}
\newcommand\cZ{\mathcal{Z}}
\newcommand\sA{\mathscr{A}}
\newcommand\sC{\mathscr{C}}
\newcommand\sD{\mathscr{D}}
\newcommand\sF{\mathscr{F}}
\newcommand\sH{\mathscr{H}}
\newcommand\sU{\mathscr{U}}
\newcommand\sV{\mathscr{V}}
\newcommand\Alg{\mathrm{Alg}}
\newcommand\Hopf{\mathrm{Hopf}}
\newcommand\Comm{\mathrm{Comm}}
\newcommand\Hom{\mathrm{Hom}}
\newcommand\HHom{\underline{\mathrm{Hom}}}
\newcommand\End{\mathrm{End}}
\newcommand\dg{\mathrm{dg}}
\newcommand\per{\mathrm{per}}
\newcommand\Gal{\mathrm{Gal}}
\newcommand\Ob{\mathrm{Ob}\,}
\newcommand\Co{\mathrm{Co}}
\newcommand\mal{\mathrm{Mal}}
\newcommand\Spec{\mathrm{Spec}\,}
\newcommand\Cat{\mathrm{Cat}}
\newcommand\Sing{\mathrm{Sing}}
\newcommand\FD{\mathrm{FD}}
\renewcommand\>{\rangle}
\newcommand\Lim{\varprojlim}
\newcommand\LLim{\varinjlim}
\newcommand\into{\hookrightarrow}
\newcommand\abuts{\implies}
\newcommand\xra{\xrightarrow}
\newcommand\pr{\mathrm{pr}}
\newcommand\alg{\mathrm{alg}}
\newcommand\bt{\bullet}
\newcommand\by{\times}
\newcommand\Vect{\mathrm{Vect}}
\newcommand\Rep{\mathrm{Rep}}
\newcommand\Symm{\mathrm{Symm}}
\newcommand\et{\acute{\mathrm{e}}\mathrm{t}}
\newcommand\Nori{\mathrm{Nori}}
\newcommand\mot{\mathrm{mot}}
\newcommand\Tot{\mathrm{Tot}\,}
\renewcommand\ss{\mathrm{ss}}
\newcommand\ind{\mathrm{ind}}
\newcommand\pd{\partial}
\newcommand\MHS{\mathrm{MHS}}
\newcommand\MTS{\mathrm{MTS}}
\newcommand\gr{\mathrm{gr}}
\renewcommand\alg{\mathrm{alg}}
\newcommand\red{\mathrm{red}}
\newcommand\Lie{\mathrm{Lie}}
\newcommand\dR{\mathrm{dR}}
\newcommand\op{\mathrm{opp}}
\newcommand\eff{\mathrm{eff}}
\newcommand\co{\colon\thinspace}
\newcommand\oR{\mathbf{R}}
\newcommand\oL{\mathbf{L}}
\newcommand\uleft\underleftarrow
\newcommand\uline\underline
\newcommand\uright\underrightarrow
\newtheorem{theorem}{Theorem}[section]
\newtheorem{proposition}[theorem]{Proposition}
\newtheorem{corollary}[theorem]{Corollary}
\newtheorem{lemma}[theorem]{Lemma}
\newtheorem*{theorem*}{Theorem}
\newtheorem*{proposition*}{Proposition}
\newtheorem*{corollary*}{Corollary}
\newtheorem*{lemma*}{Lemma}
\newtheorem*{conjecture*}{Conjecture}
\theoremstyle{definition}
\newtheorem{definition}[theorem]{Definition}
\newtheorem*{definition*}{Definition}
\theoremstyle{remark}
\newtheorem{example}[theorem]{Example}
\newtheorem{examples}[theorem]{Examples}
\newtheorem{remark}[theorem]{Remark}
\newtheorem*{example*}{Example}
\newtheorem*{examples*}{Examples}
\newtheorem*{remark*}{Remark}
\newtheorem*{remarks*}{Remarks}
\newtheorem*{exercise*}{Exercise}
\newtheorem*{property*}{Property}
\newtheorem*{properties*}{Properties}
\begin{document}

\begin{abstract}
 We consider the effect of $t$-structures on the Tannaka duality theory for dg categories developed in \cite{HHtannaka1}. We  associate  non-negative dg coalgebras $C$ to dg functors on the hearts of $t$-structures, and relate dg $C$-comodules to the original dg category.  We give several applications for pro-algebraic homotopy types associated to various cohomology theories, and for motivic Galois groups.
\end{abstract}

\title[Tannaka duality for enhanced triangulated categories II: $t$-structures]{Tannaka duality for enhanced triangulated categories II: $t$-structures and homotopy types}
\author{J.P.Pridham}
\thanks{This work was supported by  the Engineering and Physical Sciences Research Council [grant numbers EP/I004130/1 and EP/I004130/2].}

\maketitle
\section*{Introduction}

Tannaka duality in Joyal and Street's formulation (\cite[\S 7, Theorem 3]{JoyalStreet}) characterises abelian $k$-linear categories $\cA$ with exact faithful $k$-linear functors $\omega$ to finite-dimensional $k$-vector spaces as categories of finite-dimensional comodules of coalgebras $C$. When $\cA$ is a rigid tensor category and $\omega$ monoidal, $C$ becomes a Hopf algebra (so $\Spec C$ is a group scheme), giving the duality theorem of \cite[Ch. II]{tannaka}.

In \cite{HHtannaka1}, these duality theorems are extended to dg categories. Given a $k$-linear dg category $\cA$ and a $k$-linear dg functor $\omega$ to finite-dimensional complexes, there is natural a dg coalgebra structure $C$ on the Hochschild homology complex
\[
 \omega^{\vee}\ten_{\cA}^{\oL}\omega\simeq \CCC_{\bt}(\cA, \omega^{\vee}\ten_k \omega).
\]
Then \cite[Theorem \ref{HHtannaka1-tannakathm}]{HHtannaka1}  shows that the    dg derived category $\cD_{\dg}(C)$ of $C$-comodules is quasi-equivalent to a derived quotient  $\cD_{\dg}(\cA)/(\ker\omega)$ of the
dg derived category $\cD_{\dg}(\cA)$   generated by $\cA$. In particular, when $\omega$ is faithful, this gives a quasi-equivalence
$\cD_{\dg}(\cA) \simeq \cD_{\dg}(C)$, which is a derived analogue of Joyal and Street's Tannaka duality.

The main drawback of the Hochschild construction for the dg coalgebra in \cite{HHtannaka1} is that it always creates terms in negative cochain degrees. This means that quasi-isomorphisms of such dg coalgebras might not be derived Morita equivalences, and that we cannot rule out negative homotopy groups for dg categories of cohomological origin. 

In Section  \ref{SSsn}, we give an alternative presentation of  the Hochschild construction which associates non-negative dg coalgebras to hearts of $t$-structures (Corollary \ref{coalgconc3}, Propositions \ref{coalgconcequiv},
 \ref{coalgconcper}).
In this setting, the correspondence between dg categories and dg coalgebras can be understood as a form of Koszul duality
(Proposition \ref{Koszulequiv}). Via duality of the commutative and Lie operads,  dg tensor categories then correspond to dg Hopf algebras (Corollary \ref{hopfalgconc3}, Proposition \ref{Koszulequiv2}). Example \ref{mothom2}  then explains how these results combine with Ayoub's calculations to show that existence of a motivic $t$-structure would characterise Voevodsky's motives over a number field as the derived category of Nori's abelian category of mixed motives, implying the $K(\pi,1)$ conjecture.
\S \ref{moriyasn} explains how our constructions generalise Moriya's Tannakian dg categories. 

Section \ref{htpysn} is mostly concerned with applications to the real relative Malcev homotopy types of a manifold $X$. 
Lemma \ref{integratelemma} equates the dg category of derived connections on $X$ with  the pre-triangulated category generated by the de Rham dg category of semisimple local systems. Corollary \ref{integratecor} then  equates this with the dg category of representations of the schematic homotopy type $G(X,x)^{\alg}$. \S \ref{univhalg} looks at the universal bialgebra, which avoids  choices of basepoint and can be thought of as the sheaf 
of functions on the space of algebraic paths.
In \S \ref{qlhtpy}, we establish analogues for $\Ql$ relative Malcev homotopy types of a scheme, and \S \ref{motsn} discusses motivic generalisations.

I would like to thank Joseph Ayoub for providing helpful comments and spotting careless errors.

\subsection*{Notational conventions}

 Fix a commutative ring $k$.
 When the base is not specified, $\ten$ will mean $\ten_k$. When $k$ is a field, we write $\Vect_k$ for the category of all vector spaces over $k$, and $\FD\Vect_k$ for the full subcategory of finite-dimensional vector spaces.

We will always use the symbol $\cong$ to denote isomorphism, while $\simeq$ will be equivalence,  quasi-isomorphism or quasi-equivalence.

\tableofcontents

\section{Background from \cite{HHtannaka1}}
We now recall some conventions, definitions and results from \cite{HHtannaka1}.

\subsection{Conventions for DG categories}
\begin{definition}
 A $k$-linear dg category $\C$ is a category enriched in cochain complexes of $k$-modules, so has objects $\Ob \C$, cochain complexes $\HHom_{\C}(x,y)$ of morphisms, associative multiplication
\[
 \HHom_{\C}(y,z)\ten_k\HHom_{\C}(x,y)\to \HHom_{\C}(x,z)
\]
and identities $\id_x \in \HHom_{\C}(x,x)^0$.
\end{definition}

Given a dg category $\C$, we will write $\z^0\C$ and $\H^0\C$ for the categories with the same objects as $\C$ and with morphisms
\begin{align*}
 \Hom_{\z^0\C}(x,y) &:=\z^0\HHom_{\C}(x,y),\\
\Hom_{\H^0\C}(x,y) &:=\H^0\HHom_{\C}(x,y).
\end{align*}

 When we refer to limits or colimits in a dg category $\C$, we will mean limits or colimits in the underlying category $\z^0\C$.

\begin{definition}
Given a dg category $\C$ and objects $x,y$, write $\C(x,y):= \HHom_{\C}(y,x)$.
\end{definition}

\begin{definition}
 A dg functor $F\co \cA \to \cB$ is said to be a quasi-equivalence if $\H^0F\co \H^0\cA \to \H^0\cB$ is an equivalence of categories, with $\cA(X,Y) \to \cB(FX,FY)$ a quasi-isomorphism for all objects $X,Y \in \cA$.
\end{definition}

\begin{definition}\label{perkdef}
We follow \cite[2.2]{kellerModelDGCat} in writing $\C_{\dg}(k)$ for the dg category of chain complexes over $k$, where $\HHom(U,V)^i$ consists of graded $k$-linear morphisms $U \to V[i]$, and the differential is given by $df= d \circ f \mp f \circ d$. 

We write
$\per_{\dg}(k)$ for the full dg subcategory of 
finite rank  cochain complexes of projective $k$-modules. Beware that this category is not closed under quasi-isomorphisms, so does not include all perfect complexes in the usual sense. 
\end{definition}

Following the conventions of \cite[3.1]{kellerModelDGCat}, we will write $\C_{\dg}(\cA)$ for the  dg category  of $k$-linear dg functors $\cA^{\op} \to \C_{\dg}(k)$ to  chain complexes over $k$. Observe that when $\cA$ has a single object $*$ with $\cA(*,*)=A$, $\C_{\dg}(\cA)$ is equivalent to the category of $A$-modules in complexes. We write $\C(\cA)$ for the (non-dg) category $\z^0\C_{\dg}(\cA)$ of dg $\cA$-modules.

An object $P$ of  $\C(\cA)$ is cofibrant  (for the projective model structure) if every surjective quasi-isomorphism $L \to P$ has a section. The full dg subcategory of $\C_{\dg}(\cA)$ on cofibrant objects  is  denoted $\cD_{\dg}(\cA)$. 
This is the idempotent-complete  pre-triangulated category (in the sense of \cite[Definition 3.1]{BondalKapranov}) generated by $\cA$ and closed under filtered colimits. We write  $\cD(\cA)$ for the derived category $\H^0\cD_{\dg}(\cA)$ of dg $\cA$-modules --- this is equivalent to the localisation of $\C(\cA)$ at quasi-isomorphisms. Thus $\cD_{\dg}(\cA)$ is a dg enhancement of the triangulated category $\cD(A)$.

\begin{definition}\label{perdef}
 Define $\per_{\dg}(\cA) \subset \cD_{\dg}(\cA)$ to be the full subcategory on compact objects, i.e those $X$ for which
\[
 \HHom_{\cA}(X,-)
\]
 preserves filtered colimits.
Explicitly, $\per_{\dg}(\cA)$  consists of objects arising as direct summands of finite complexes of objects of the form $h_{X}[n]$, for $X \in \cA$, where $h$ is the Yoneda embedding. 
\end{definition}

When $\cA$ has a single object $*$ with $\cA(*,*)=A$, then $h_{*}[n]$ corresponds to the $A$-module $A[n]$. Since projective modules are direct summands of free modules, Definitions \ref{perdef} and \ref{perkdef} are thus consistent.

As explained in \cite[4.5]{kellerModelDGCat},  $\per_{\dg}(\cA)$ is the idempotent-complete   pre-triangulated envelope or hull of $\cA$, in the sense of \cite[\S 3]{BondalKapranov}. 
Note that in \cite[\S 2]{kellerHCexact}, pre-triangulated categories are called exact DG categories. 

By \cite[Theorem 5.1]{tabuadaInvariantsAdditifs}, there is a Morita model structure on $k$-linear dg categories. Weak equivalences are dg functors $\cA \to \cB$ which are derived Morita equivalences in the sense that
\[
 \cD_{\dg}(\cA) \to \cD_{\dg}(\cB)       
\]
is a quasi-equivalence.
The dg functor $\cA \to \per_{\dg}(\cA)$ is  fibrant replacement in this model structure.

Note that a dg category $\cA$ is an  idempotent-complete pre-triangulated category if and only if the natural embedding $\cA \to \per_{\dg}(\cA)$ is a quasi-equivalence. This is equivalent to saying that $\cA$ is Morita fibrant (i.e. fibrant in the Morita model structure), or triangulated in the terminology of \cite[Definition 2.4]{TVdg}.

\subsection{Hochschild homology of a DG category}\label{HHsn}


The following is adapted from \cite[\S 12]{mitchellRings} and \cite[1.3]{kellerHCexact}:

\begin{definition}
Take a small $k$-linear dg category $\cA$ and an $\cA$-bimodule
\[
 F\co  \cA\by \cA^{\op}\to \C_{\dg}(k),       
\]
(i.e. a $k$-bilinear dg functor). Define the homological Hochschild complex
\[
 \uline{\CCC}_{\bt}(\cA, F)      
\]
(a simplicial diagram of cochain complexes) by
\[
 \uline{\CCC}_n(\cA,F):= \bigoplus_{X_0, \ldots, X_n \in \Ob \cA }  \cA(X_0,X_1)\ten_k\cA(X_1,X_2) \ten_k \ldots \ten_k\cA(X_{n-1},X_n) \ten_kF(X_n,X_0),   
\]
with face maps
\[
 \pd_i(a_1\ten\ldots a_n \ten f)= \left\{ \begin{matrix}   a_2\ten\ldots a_n \ten (f \circ a_1) & i=0 \\  a_1\ten\ldots a_{i-1} \ten (a_i \circ a_{i+1}) \ten a_{i+2} \ten \ldots\ten a_n \ten f & 0<i <n \\
  a_1\ten\ldots a_{n-1} \ten (a_n \circ f) & i=n                                              
                                          \end{matrix}\right.
 \]
and degeneracies
\[
 \sigma_i(a_1\ten\ldots a_n \ten f)= (a_1\ten \ldots \ten a_i \ten \id \ten a_{i+1} \ten \ldots \ten a_n \ten f).
 \]
\end{definition}

\begin{definition}


Define the total  Hochschild complex
\[
 \CCC(\cA,F)
\]
by first regarding $\uline{\CCC}_{\bt}(\cA, F)$ as a chain cochain complex with chain differential $\sum_i (-1)^i\pd_i$, then taking the total complex
\[
( \Tot \uline{\CCC}_{\bt}(\cA, F)^{\bt})^n = \bigoplus_i \uline{\CCC}_{i}(\cA, F)^{n+i},
\]
 with differential given by the cochain differential $\pm$ the chain differential. 

There is also a quasi-isomorphic normalised version 
\[
 N\CCC(\cA,F),
\]
given by replacing $\uline{\CCC}_{i}$ with $\uline{\CCC}_{i}/\sum_j \sigma_j \uline{\CCC}_{i-1}$.
\end{definition}

\begin{remark}
Note that $\H^i \CCC(\cA,F)^{\bt}= \HH_{-i}(\cA,F)$,  which is a Hochschild \emph{homology} group. We have, however, chosen cohomological gradings because our motivating  examples will all have $\H^{<0}=0$.
\end{remark}

 \subsubsection{The Tannakian envelope}

 Fix a small $k$-linear dg category $\cA$ and a $k$-linear dg  functor $\omega \co \cA \to \per_{\dg}(k)$.
 
%
\begin{definition}\label{Comegadef}
 Define the Tannakian dual $C_{\omega}( \cA)$ by 
\[
 C_{\omega}( \cA):= \CCC(\cA,\omega \ten \omega^{\vee}),
\]
where the $\cA$-bimodule
\[
 \omega \ten \omega^{\vee} \co \cA \by \cA^{\op} \to \per_{\dg}(k) 
\]
is given 
by
\[
 \omega \ten \omega^{\vee} (x,y)= (\omega x)\ten_k (\omega y)^{\vee}.
\]
Similarly, write $NC_{\omega}( \cA):= N\CCC(\cA,\omega \ten \omega^{\vee})$.
\end{definition}

By \cite[Proposition \ref{HHtannaka1-coalg}]{HHtannaka1},  The cochain complexes $C_{\omega}( \cA), NC_{\omega}( \cA) $ 
 have the natural structure of  coassociative counital dg coalgebras over $k$.

 \subsubsection{The universal coalgebra and tilting modules}\label{univcoalg}\label{tilt}
%
%
%
Take a $k$-linear dg category $\cA$, and  $D \in \cD_{\dg}(\cA^{\op}\ten \cA)$  a coassociative $\ten_{\cA}$-coalgebra, with the co-unit $D \to \id_{\cA}$
a  quasi-isomorphism. We regard this as being a universal coalgebra associated to $\cA$.

 \begin{example}\label{HHcoalgex}
As in \cite[Example \ref{HHtannaka1-HHcoalgex}]{HHtannaka1}, if the $k$-complexes $\cA(X, Y)$ are all cofibrant (automatic when $k$ is a field), canonical choices for $D$ are the unnormalised and normalised Hochschild complexes
\begin{align*}
   \CCC(\cA,h_{\cA^{\op}} \ten h_{\cA})\\ 
 N\CCC(\cA,h_{\cA^{\op}} \ten h_{\cA})
\end{align*}
    of the Yoneda embedding $h_{\cA^{\op}} \ten h_{\cA}  \co \cA^{\op}\ten \cA\to \C_{\dg}(\cA^{\op}\ten \cA)$. D
\end{example}

Given 
$\omega \co \cA \to \per_{\dg}(k)$, define the tilting module  $P$ by $P:=D\ten_{\cA}\omega \in \C(\cA^{\op})$; this is  cofibrant and has  a natural quasi-isomorphism $P \to \omega$. Also set $Q\in \C(\cA)$ by $Q:=\omega^{\vee}\ten_{\cA}D$ and set $C:= \omega^{\vee}\ten_{\cA}D\ten_{\cA}\omega$. Note that the natural transformation $\id_{\cA} \to \omega\ten_k \omega^{\vee}$ makes $C$ into a dg coalgebra over $k$.
Likewise,  $P$ becomes a right $C$-comodule and $Q$ a left $C$-comodule.

Also note that because $D$ is a cofibrant replacement for $\id_{\cA}$, we have
\[
C\simeq  \omega^{\vee}\ten_{\cA}^{\oL}\id_{\cA}\ten_{\cA}^{\oL}\omega \simeq \omega^{\vee}\ten^{\oL}_{\cA}\omega.
\]

%
%
%
%
%
%
%
 \subsubsection{Monoidal categories}\label{univbialg}
%
 For the purposes of this subsection $(\cA,\boxtimes,\mathbbm{1})$ is a strictly  monoidal dg category, so we have $k$-linear dg functors $\mathbbm{1} \co k \to \cA$ and  $\boxtimes \co \cA \ten \cA \to \cA$, such that if we also write $\mathbbm{1}$ for the image of the unique object in $k$,
\[
 (X\boxtimes Y)\boxtimes Z = X \boxtimes (Y \boxtimes Z), \quad \mathbbm{1} \boxtimes X = X, \quad X \boxtimes \mathbbm{1} = \mathbbm{1}.
\]

\begin{definition}
 Say that a dg functor $\omega\co \cA \to \per_{\dg}(k)$  is lax monoidal if it is equipped with natural transformations
\[
 \mu_{XY} \co \omega(X)\ten \omega(Y)\to \omega(X\boxtimes Y), \quad  \eta\co k \to \omega(\mathbbm{1})
\]
satisfying associativity and unitality conditions.

It is said to be strict (resp. strong, resp. quasi-strong) if $\mu$ and $\eta$ are equalities (resp. isomorphisms, resp. quasi-isomorphisms).
\end{definition}

By \cite[Proposition \ref{HHtannaka1-bialg}]{HHtannaka1}, if $\omega \co \cA \to \per_{\dg}(k) $ is strongly monoidal, the monoidal structures endow the   dg coalgebras $C_{\omega}( \cA), N C_{\omega}( \cA)$  with the natural structure of unital dg bialgebras. These are graded-commutative whenever $\boxtimes$ and $\omega$ are symmetric.

The monoidal structure $\boxtimes$ on $\cA$ induces a monoidal structure on $\cA^{\op}$, which we also denote by $\boxtimes$. There is also a monoidal structure $\boxtimes^2$ on $\cA^{\op}\ten \cA$, given by $(X\ten Y)\boxtimes^2(X'\ten Y'):= (X\boxtimes X')\ten (Y\boxtimes Y')$. As in \cite[Definition \ref{HHtannaka1-boxtimesdgdef}]{HHtannaka1}, these extend to dg functors
\begin{align*}
 \boxtimes \co \cD_{\dg}(\cA) \ten \cD_{\dg}(\cA) &\to \cD_{\dg}(\cA),\\
 \boxtimes \co \cD_{\dg}(\cA^{\op}) \ten \cD_{\dg}(\cA^{\op}) &\to \cD_{\dg}(\cA^{\op}),\\
\boxtimes^2 \co \cD_{\dg}(\cA^{\op} \ten \cA) \ten \cD_{\dg}(\cA^{\op} \ten \cA) &\to \cD_{\dg}(\cA^{\op} \ten \cA)
\end{align*}
 extending the dg functors to finite complexes, filtered colimits and direct summands.

\begin{definition}
 As in \cite[Definition \ref{HHtannaka1-univbialgdef}]{HHtannaka1}, we say that a universal coalgebra $D$ (in the sense of \S \ref{univcoalg}) is a universal bialgebra with respect to $\boxtimes$ if is equipped with an associative multiplication $D\boxtimes^2D \to D$ and a unit ${\mathbbm{1}}\ten {\mathbbm{1}} \to D$, both compatible with the coalgebra structure.
\end{definition}


As in \cite[Example \ref{HHtannaka1-HHunivbialg}]{HHtannaka1}, the Hochschild complexes of Example \ref{HHcoalgex}
are universal bialgebras whenever $k$ is a field.

%

By \cite[Lemma \ref{HHtannaka1-Calglemma}]{HHtannaka1}, for any universal bialgebra $D$ and a strong monoidal dg functor $\omega$, the dg coalgebra $C:= \omega^{\vee}\ten_{\cA}D\ten_{\cA}\omega$ becomes a unital associative dg bialgebra, which is commutative whenever $D$ is commutative and $\omega$  symmetric.

\begin{definition}
 Let $\C_{\dg}(C)$ be the dg category of right $C$-comodules in cochain complexes over $k$. Write $\C(C)$ for the underlying category $\z^0\C_{\dg}(C)$ of right $C$-comodules in cochain complexes, and $\cD(\C)$ for the homotopy category given by formally inverting quasi-isomorphisms. We then write $\cD_{\dg}(C)$ for  the full dg subcategory of $\C_{\dg}(C)$ on fibrant objects, for  model structure ``of the first kind'' described in   \cite[Remark 8.2]{positselskiDerivedCategories}.
\end{definition}

\subsubsection{Tannakian comparison}
%
%
%
\begin{definition}
 Write $\ker \omega$ for the full dg subcategory of  $\cD_{\dg}(\cA)$ consisting of objects $X$ with $\omega(X):=X\ten_{\cA}\omega$ quasi-isomorphic to $0$.   

Recall from \cite[\S 12.6]{drinfeldDGQuotient}  that the right orthogonal complement $(\ker\omega)^{\perp}\subset  \cD_{\dg}(\cA)$ is the full dg subcategory consisting  of those $X$ for which $\HHom_{\cA^{\op}}(M,X) \simeq 0$ for all $M \in \ker \omega$.
\end{definition}

The following is \cite[Theorem \ref{HHtannaka1-tannakathm}]{HHtannaka1}: 
\begin{theorem}\label{tannakathm}
For the constructions of $C \simeq \omega^{\vee}\ten_{\cA}^{\oL}\omega$ and the tilting module $P$  of \S \ref{tilt},
the derived adjunction $(-\ten_{\cA}P) \dashv \oR\HHom_C(P,-)$ gives rise to a quasi-equivalence between the dg categories $(\ker\omega)^{\perp}$ and  $\cD_{\dg}(C)$. Moreover, the map $(\ker\omega)^{\perp}\to \cD_{\dg}(\cA)/(\ker \omega)$  to the dg quotient is a quasi-equivalence.   
\end{theorem}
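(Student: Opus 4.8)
The plan is to realise the inclusion‑then‑quotient map as one half of an explicit adjoint equivalence, whose other half is the Bousfield localisation endofunctor built from the tilting module. Write $F:=-\ten_{\cA}P\co \cD_{\dg}(\cA)\to \cD_{\dg}(C)$ and $G:=\oR\HHom_C(P,-)$ for the derived adjoint pair of Lemma \ref{Qlemma}, and set $L:=G\circ F$. Since every object of $\cD_{\dg}(\cA)$ is cofibrant and $\C(C)$ admits a functorial fibrant replacement (Proposition \ref{modelcomod}), $F$, $G$ and hence $L$ are honest dg endofunctors, and the unit $\eta\co \id\to L$ is a dg natural transformation. Because the tilting module satisfies $P\simeq \omega$ (\S\ref{tilt}), the complex underlying $F(X)$ is quasi-isomorphic to $\omega(X)$, so $\ker F=\ker\omega$; as $F$ preserves shifts, cones and coproducts, $\ker\omega$ is a localising subcategory.

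Next I would produce the localisation triangle. For each $X$ form $\Gamma X\to X\xrightarrow{\eta_X}LX$. The triangle identity gives $\varepsilon_{FX}\circ F(\eta_X)=\id_{FX}$, and $\varepsilon_{FX}$ is a quasi-isomorphism by Proposition \ref{retractprop}; hence $F(\eta_X)$ is a quasi-isomorphism, so $F(\Gamma X)\simeq 0$, i.e. $\Gamma X\in\ker\omega$. On the other side, the adjunction isomorphism $\oR\HHom_{\cA}(M,GN)\simeq \oR\HHom_C(FM,N)$ shows $GN\in(\ker\omega)^{\perp}$ for every $N$, so $LX\in(\ker\omega)^{\perp}$. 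Thus $\eta_X$ exhibits the semiorthogonal decomposition of $X$, with colocal part in $\ker\omega$ and local part in $(\ker\omega)^{\perp}$. In particular, for $X\in(\ker\omega)^{\perp}$ the fibre $\Gamma X$ lies in $\ker\omega\cap(\ker\omega)^{\perp}$, which is zero since an object orthogonal to itself is acyclic; hence $\eta_X$ is a quasi-isomorphism and $L|_{(\ker\omega)^{\perp}}\simeq \id$.

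It remains to identify this localisation with the dg quotient. Since $LM=G(FM)\simeq 0$ for $M\in\ker\omega$, the dg functor $L$ annihilates $\ker\omega$, so by the universal property of Drinfeld's dg quotient \cite{drinfeldDGQuotient} it factors in $\Ho(\mathrm{dgCat})$ as $L\simeq \bar L\circ\pi$, where $\pi\co \cD_{\dg}(\cA)\to \cD_{\dg}(\cA)/(\ker\omega)=:\mathcal{Q}$ is the quotient and $\bar L\co \mathcal{Q}\to(\ker\omega)^{\perp}$ lands in the local subcategory because $L$ does. Writing $\Phi:=\pi\circ\iota$ for the map in question, with $\iota\co (\ker\omega)^{\perp}\hookrightarrow\cD_{\dg}(\cA)$ the inclusion, I would then check that $\Phi$ and $\bar L$ are mutually quasi-inverse: on one side $\bar L\circ\Phi\simeq L|_{(\ker\omega)^{\perp}}\simeq\id$ by the previous paragraph; on the other, $\pi(\eta_X)\co \pi X\to \pi(LX)$ is an isomorphism in $\mathcal{Q}$ for all $X$, naturally in $X$, since its fibre $\Gamma X\in\ker\omega$ is annihilated by $\pi$, giving $\Phi\circ\bar L\simeq\id_{\mathcal{Q}}$. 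Hence $\Phi$ is a quasi-equivalence.

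The main obstacle is this last step: one must make the factorisation through $\mathcal{Q}$ precise, since Drinfeld's quotient enjoys only a homotopy universal property. The cleanest way to sidestep any ambiguity is to verify directly that $\Phi$ is fully faithful on Hom-complexes, i.e. that the canonical map $\oR\HHom_{\cA}(X,Y)\to \HHom_{\mathcal{Q}}(X,Y)$ is a quasi-isomorphism for $X,Y\in(\ker\omega)^{\perp}$. This is precisely the assertion that Hom-complexes into objects right-orthogonal to $\ker\omega$ are left unchanged by the quotient, which follows from the explicit description of Hom-complexes in \cite{drinfeldDGQuotient} together with the fact that a local object is its own local replacement. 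Essential surjectivity of $\H^0\Phi$ is then immediate from the decomposition triangle, since $\pi(\eta_X)$ identifies every $X$ in $\H^0\mathcal{Q}$ with $LX\in(\ker\omega)^{\perp}$.
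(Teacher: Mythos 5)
Your proposal is correct and follows essentially the same route as the paper: Proposition \ref{retractprop} gives the counit quasi-isomorphism, the adjunction places $\oR\HHom_C(P,-)$ in $(\ker\omega)^{\perp}$, and the unit triangle $\Gamma X \to X \to LX$ establishes right admissibility of $\ker\omega$, with the quotient statement then following from the Verdier/Bondal--Kapranov localisation result. The only difference is presentational: where the paper simply cites \cite[\S 12.6]{drinfeldDGQuotient} for the equivalence between right admissibility and $(\ker\omega)^{\perp}\simeq\cD_{\dg}(\cA)/(\ker\omega)$, you sketch a direct verification on Hom-complexes, which in substance reduces to the same cited facts.
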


\section{Dense subcategories and semisimplicity}\label{SSsn}

The beauty of Theorem \ref{tannakathm} is that it describes the derived category $\cD(\cA)$ in terms of a fibre functor on $\cA$, so is invariant under Morita equivalences. In particular, for any derived Morita equivalence $\cB \to \cA$, we have a quasi-equivalence $\cD_{\dg}(\cB) \to \cD_{\dg}(\cA)$. This becomes particularly important when we can find a Morita equivalent dg category $\cB$ for which the category $\z^0\cB$ is semisimple, since the representing coalgebra then admits a particularly simple description. 

In \cite[Remark \ref{HHtannaka1-uniquenessrmk}]{HHtannaka1}, it was observed that different choices of universal coalgebra will give dg coalgebras which are derived Morita  equivalent. A quasi-isomorphism $C \to C'$ of dg coalgebras need not be a derived Morita equivalence, in general. 
However,  for the Tannakian dg coalgebras constructed in this section,  it turns out that quasi-isomorphisms will be derived Morita equivalences (see \S \ref{koszul} below).

\subsection{Non-negatively graded dg categories}\label{nonnegsn}

\begin{definition}\label{dgcoalgndef}
 Let $DG^{\ge 0}\Co_n\Alg_k$ denote the category of dg $k$-coalgebras $C$ in non-negative cochain degrees, satisfying the additional property that the map $\H^0C \to C$ of coalgebras is ind-conilpotent. This means that we can write $C$ as a nested union $C= \LLim_{\alpha} C_{\alpha}$ of dg coalgebras with $\H^0C= \H^0C_{\alpha}$ for all $\alpha$ and $C_{\alpha}/\H^0C$ conilpotent in the sense that the comultiplication
\[
 C_{\alpha}/\H^0C \to (C_{\alpha}/\H^0C)^{\ten m}
\]
is $0$ for some $m\ge 2$.
\end{definition}

For any $C \in DG^{\ge 0}\Co_n\Alg_k$, the maximal cosemisimple subcoalgebra (\cite[4.3]{positselskiDerivedCategories}) $C_{\red}:=(\H^0C)_{\red}\subset \H^0C$ thus gives an ind-conilpotent map $C_{\red}\to C$. Since $C_{\red}$ is cosemisimple, the ind-conilpotent morphism $C_{\red} \to C_0$ admits a retraction, so $C $ is of the form $C= C_{\red}\oplus N$, for $N$ an ind-conilpotent dg coalgebra with a compatible $C_{\red}$-bicomodule structure.

\begin{proposition}\label{coalgconc1}
Take a $k$-linear  dg category $\cA$ with  $\cA(X,Y)$ concentrated in non-negative degrees, $d\cA^0(X,Y)=0$ for all $X,Y$, and with 
$\cA^0$ a semisimple abelian category. Assume that we have a $k$-linear functor $\omega \co \cA^0 \to \FD\Vect_k$. Then there is a model for the  coalgebra $C\simeq \omega^{\vee}\ten_{\cA}^{\oL}\omega$ of \S \ref{tilt} with $C \in DG^{\ge 0}\Co_n\Alg_k$.
\end{proposition}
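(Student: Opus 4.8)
The plan is to discard the Hochschild model $C_{\omega}(\cA)$ of Definition \ref{Comegadef}, whose negative cochain degrees come from the degree-$0$ part of $\cA$, and replace it by a bar construction taken \emph{relative} to the semisimple subcategory $\cA^0$. The positive grading of the complementary piece $\cA^+$ will then force the answer into non-negative degrees, while semisimplicity of $\cA^0$ guarantees that this relative model still computes $\omega^\vee\ten^{\oL}_{\cA}\omega$. First I would observe that $\omega$ extends canonically to a $k$-linear dg functor $\omega\co\cA\to\per_{\dg}(k)$ with values concentrated in degree $0$: on objects and on $\cA^0$ it is the given functor, and it automatically kills $\cA^{>0}(X,Y)$ because $\HHom_{\per_{\dg}(k)}(\omega X,\omega Y)$ vanishes in positive degrees. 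The same holds for $\omega^\vee$.

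Write $\cA^+$ for the positive part $\bigoplus_{i>0}\cA^i$ of the morphism complexes, regarded as an $\cA^0$-bimodule. Since $\cA^0$ is a semisimple abelian $k$-linear category, every $\cA^0$-module is projective, so induction $\cA\ten_{\cA^0}(-)$ carries $\cA^0$-modules to projective $\cA$-modules. Consequently the normalised bar resolution of $\id_{\cA}$ relative to $\cA^0$, with simplicial-degree-$n$ term $h_{\cA^{\op}}\ten_{\cA^0}(\cA^+)^{\ten_{\cA^0}n}\ten_{\cA^0}h_{\cA}$, is an $\cA^0$-split complex of projectives and hence an honest resolution; I would take this as the universal coalgebra $D$ of \S\ref{univcoalg}, equipped with the deconcatenation-and-$\id$-insertion coproduct of Proposition \ref{coalg}. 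Setting
\[
 C:=\omega^\vee\ten_{\cA}D\ten_{\cA}\omega
\]
then produces a dg coalgebra with $C\simeq\omega^\vee\ten^{\oL}_{\cA}\omega$ by \S\ref{tilt}, namely the total complex of
\[
 n\mapsto \omega^\vee\ten_{\cA^0}(\cA^+)^{\ten_{\cA^0}n}\ten_{\cA^0}\omega.
\]

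Because each factor of $\cA^+$ lies in cochain degree $\ge 1$, the simplicial-degree-$n$ term lives in cochain degrees $\ge n$; as the total complex places simplicial degree $n$ in cochain degree (internal degree)$-n$, the complex $C$ is concentrated in non-negative degrees. For the ind-conilpotency of $\H^0C\to C$, note first that $\H^0C=\z^0C$ (as $C^{-1}=0$) is a \emph{subcoalgebra}: by Künneth $\H^0(C\ten C)=\H^0C\ten\H^0C$, and if $\sum a_i\ten b_i\in C^0\ten C^0$ is a cocycle with the $b_i$ independent then each $a_i$, and symmetrically each $b_i$, is a cocycle. Now filter by bar length: with $F_pC$ the sum of the terms of simplicial degree $\le p$, each $F_pC$ is a subcoalgebra (deconcatenation does not increase length), hence so is $C_p:=\H^0C+F_pC$, and $C=\LLim_pC_p$ with $\H^0C_p=\H^0C$. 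Modulo $\H^0C$ the length-$0$ summand $\omega^\vee\ten_{\cA^0}\omega$ is killed, so the induced comultiplication on $C_p/\H^0C$ is the strictly length-decreasing reduced deconcatenation; its $m$-fold iterate sends a word to sums of $m$-fold splittings into nonempty subwords, of total length $\ge m$, and therefore vanishes once $m>p$. Thus each $C_p/\H^0C$ is conilpotent and $C\in DG^{\ge 0}\Co_n\Alg_k$.

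The main obstacle is the middle step: justifying that the $\cA^0$-relative bar resolution genuinely computes $\ten^{\oL}_{\cA}$, i.e.\ that relative $\Tor$ over $\cA^0$ agrees with the absolute derived tensor, so that the non-negatively graded $C$ is quasi-isomorphic to the Hochschild coalgebra $C_{\omega}(\cA)$. This is exactly where semisimplicity of $\cA^0$ is indispensable --- it is what upgrades the always-available $\cA^0$-split relative resolution into a resolution by genuinely $\cA$-projective modules. Once this is in place, the coalgebra structure (inherited from Proposition \ref{coalg}), the grading estimate, and the conilpotency filtration are all formal.
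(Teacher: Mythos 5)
Your proposal is correct and follows essentially the same route as the paper: the universal coalgebra is taken to be the normalised Hochschild/bar resolution of $\id_{\cA}$ relative to $\cA^0$ (with terms $\cA\ten_{\cA^0}(\cA^{>0})^{\ten_{\cA^0}n}\ten_{\cA^0}\cA$), semisimplicity of $\cA^0$ is used exactly to make the $\cA^0$-split resolution genuinely cofibrant, the positive grading of $\cA^{>0}$ gives the non-negativity of $C$, and ind-conilpotency comes from the bar-length filtration. Your extra details (the canonical extension of $\omega$ killing $\cA^{>0}$, and the explicit check that $\z^0C$ is a subcoalgebra) are correct refinements of steps the paper leaves implicit.
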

\begin{proof}
For $i \co \cA^0 \into \cA$, we set $D$ to be the direct sum total complex $N\CCC(\cA/\cA^0, i^{\op}\ten i) $ of the normalisation $N\CCC_{\bt}(\cA/\cA^0,i^{\op}\ten i )$ of the simplicial cochain complex $\uline{\CCC}_{\bt}(\cA/\cA^0,i^{\op}\ten i )$ given by 
\[
 \uline{\CCC}_n(\cA/\cA^0,i^{\op}\ten i ):= \cA(-,-)\ten_{\cA^0}\underbrace{\cA(-,-)\ten_{\cA^0}\ldots \ten_{\cA^0}\cA(-,-)}_n\ten_{\cA^0}\cA(-,-).
\]
Equivalently, $N\CCC_{\bt}(\cA/\cA^0,i^{\op}\ten i )$  is the total complex of 
\[
 n \mapsto \cA(-,-)\ten_{\cA^0}\underbrace{\cA^{>0}(-,-)\ten_{\cA^0}\ldots \ten_{\cA^0}\cA^{>0}(-,-)}_n\ten_{\cA^0}\cA(-,-).
\]

The comultiplication and counit are  given by the  formulae of \cite[Proposition \ref{HHtannaka1-coalg}]{HHtannaka1}, so we need only show that the counit $D \to \id_{\cA}$ is a quasi-isomorphism and that $D$ is a cofibrant module.

The identity $\id_X \in \cA(X,X)$ gives a contracting homotopy of the complex $D(X,Y) \to \cA(X,Y)$, which ensures that the counit is a quasi-isomorphism. To see that $D$ is cofibrant, we just note that
\[
 \cA(-,iX)\ten_k\underbrace{\cA^{>0}(-,-)\ten_{\cA^0}\ldots \ten_{\cA^0}\cA^{>0}(-,-)}_n\ten_k\cA(iY,-)
\]
is a cofibrant module for all $X,Y \in \cA^0$, and that semisimplicity of $\cA^0$ ensures that taking $\cA^0$-coinvariants is an exact functor, so preserves cofibrancy.

Now, observe that $C:= \omega^{\vee}\ten_{\cA}D\ten_{\cA}\omega$ is the direct sum total complex of
\[
n \mapsto \omega^{\vee}\ten_{\cA^0}\underbrace{\cA^{>0}(-,-)\ten_{\cA^0}\ldots \ten_{\cA^0}\cA^{>0}(-,-)}_n\ten_{\cA^0}\omega, 
\]
which has no negative terms, since $\omega$ is concentrated in degree $0$ and $(\cA^{>0})^{\ten n}$ in degrees $\ge n$.

Finally, observe that the morphism $\omega^{\vee}\ten_{\cA^0}\omega \to C$ is an ind-conilpotent extension (cofiltering  by copowers of $\omega^{\vee}\ten_{\cA^0}\cA^{>0}\ten_{\cA^0}\omega $) and that $\omega^{\vee}\ten_{\cA^0}\omega \subset \H^0C$, so the morphism $\H^0C \to C$ is necessarily also conilpotent.
\end{proof}

\begin{remark}\label{cechnerve}
 Like the construction of Example \ref{HHcoalgex}, the universal coalgebra of Proposition \ref{coalgconc1} can be written as a \v Cech nerve. Set $L:= \cA(-,-)\ten_{\cA^0}\cA(-,-)$, which is a $\ten_{\cA}$-coalgebra in $\C_{\dg}(\cA^{\op}\ten \cA)$. We then have
 \[
        \uline{\CCC}_n(\cA/\cA^0,i^{\op}\ten i ) =\underbrace{L\ten_{\cA}L \ten_{\cA} \ldots \ten_{\cA}L}_{n+1},
 \]
giving the \v Cech nerve of the $\ten_{\cA}$-comonoid $L$. 
\end{remark}

\begin{remark}\label{irredHHrmk}
 If $k$ is algebraically closed, then the  complex $ CC_{\bt}(\cA/\cA^0,i^{\op}\ten i )$ admits a simpler description. Let $\{V_{\alpha}\}_{\alpha}$ be a set of irreducible objects of $\cA^0$, with one in each isomorphism class. Since $k$ is algebraically closed, $\End_{\cA^0}(V_{\alpha})\cong k$, and we get
 \begin{align*}
&\uline{\CCC}_n(\cA/\cA^0,i^{\op}\ten i )\cong \\
&\bigoplus_{\alpha_0, \ldots, \alpha_n}  \cA(-,V_{\alpha_0})\ten_k\cA(V_{\alpha_0},V_{\alpha_1})\ten_k\ldots \ten_k\cA(V_{\alpha_{n-1}},V_{\alpha_n})\ten_k\cA(V_{\alpha_n},-).     
 \end{align*}

Writing $\cA_s \subset \cA$ for the full dg subcategory on objects $\{V_{\alpha}\}_{\alpha}$, this gives an isomorphism
\[
\CCC_{\bt}(\cA_s,i^{\op}\ten i )  \cong \CCC_{\bt}(\cA/\cA^0,i^{\op}\ten i ).     
\]
Thus the quasi-isomorphism  $\CCC_{\bt}(\cA/\cA^0,i^{\op}\ten i ) \to\CCC_{\bt}(\cA,h_{\cA^{\op}}\ten h_{\cA} ) $ is a consequence of the derived Morita equivalence $\cA_s \to \cA$.
\end{remark}

\begin{corollary}\label{coalgconc2}
 Take a $k$-linear dg category $\cA$ with  $\cA(X,Y)$ concentrated in non-negative degrees, and
$\H^0\cA$  a semisimple abelian category. Assume that we have a $k$-linear functor $\omega \co \cA^0 \to \FD\Vect_k$. Then there is a dg coalgebra $C\in DG^{\ge 0}\Co_n\Alg_k$ with $C\simeq \omega^{\vee}\ten_{\cA}^{\oL}\omega$,   together with  quasi-equivalences  $\cD_{\dg}(\cA)/(\ker \omega) \simeq  (\ker \omega)^{\perp} \simeq \cD_{\dg}(C)$.
\end{corollary}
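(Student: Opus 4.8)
The plan is to reduce to Proposition \ref{coalgconc1} and then invoke Theorem \ref{tannakathm}; the only genuinely new point is that we have weakened ``$d\cA^0=0$ with $\cA^0$ semisimple'' to ``$\H^0\cA$ semisimple''. I would begin by passing to the semisimple base $\cS:=\z^0\cA$. Since $\cA$ is concentrated in non-negative degrees we have $\z^0\cA=\H^0\cA$, so $\cS$ is a semisimple $k$-linear category embedded in $\cA$ as the degree-$0$ cocycles, carrying the zero differential. Because the elements of $\cS$ are cocycles, the differential of $\cA(-,-)$ is $\cS$-bilinear, so $\cA$ is a connective dg category over the semisimple base $\cS$; in particular every $\cS$-bimodule is projective and injective and $\ten_{\cS}$ is exact.

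Next I would replace $\cA$ by a quasi-equivalent dg category $\cB$ with $\cB^0=\H^0\cA$ and $d\cB^0=0$, so that Proposition \ref{coalgconc1} applies to $(\cB,\omega)$ with $\omega$ restricted to $\cB^0=\H^0\cA$. This step is \emph{necessary}: the naive relative bar complex $N\CCC(\cA/\cS,\,i^{\op}\ten i)$, for $i\co\cS\into\cA$, does not yield a non-negative $\omega^{\vee}\ten_{\cS}(-)\ten_{\cS}\omega$, because its normalised slots $\cA/\cS$ retain the degree-$0$ summand $W:=\cA^0/\cS$, and the total-complex shift then pushes these into negative degrees. To build $\cB$ I would split, using semisimplicity of $\cS$, the bimodule $\cA^0=\cS\oplus W$ and then $\cA^1=dW\oplus V^1$, so that $(W\xra{\sim}dW)$ is an acyclic direct summand of $\cA(-,-)$ in the category of complexes of $\cS$-bimodules. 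Cancelling this summand by homological perturbation (equivalently, Kadeishvili transfer of the multiplication to $\H^0$-reduced data, all splittings and contracting homotopies being $\cS$-bilinear by semisimplicity) produces a model whose degree-$0$ part is exactly $\cS=\H^0\cA$.

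Granting such a $\cB$, the remainder is formal. Proposition \ref{coalgconc1} yields a coalgebra $C\in DG^{\ge 0}\Co_n\Alg_k$ together with an ind-compact universal coalgebra $D$ for $\cB$ (ind-compactness following, over the field $k$, from an exhaustion as in Example \ref{HHunivcoalg}), with $C\simeq\omega^{\vee}\ten_{\cB}^{\oL}\omega$; since $\cB\simeq\cA$ and the derived tensor product is a quasi-equivalence invariant, this is also $\omega^{\vee}\ten_{\cA}^{\oL}\omega$. Theorem \ref{tannakathm} then supplies quasi-equivalences $(\ker\omega)^{\perp}\simeq\cD_{\dg}(C)$ and $(\ker\omega)^{\perp}\simeq\cD_{\dg}(\cB)/(\ker\omega)$; transporting along the quasi-equivalence $\cD_{\dg}(\cB)\simeq\cD_{\dg}(\cA)$, which identifies the two copies of $\ker\omega$, gives the asserted $\cD_{\dg}(\cA)/(\ker\omega)\simeq(\ker\omega)^{\perp}\simeq\cD_{\dg}(C)$.

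The main obstacle is the middle step: producing $\cB$ with $\cB^0=\H^0\cA$ while keeping a \emph{strict} dg, rather than merely $A_\infty$, structure. The difficulty is that cancelling the acyclic pair $(W\xra{\sim}dW)$ is only a homotopy equivalence of complexes and does not respect composition on the nose, so the honest transfer naturally lands in the $A_\infty$ world. I expect this to be resolved either by strictifying the resulting minimal $A_\infty$-model back to a dg category, or --- more in keeping with the explicit constructions here --- by building the non-negative coalgebra directly as the $\cS$-relative bar construction of that minimal $A_\infty$-model, which is automatically conilpotent and concentrated in non-negative degrees, and then checking it still computes $\omega^{\vee}\ten_{\cA}^{\oL}\omega$.
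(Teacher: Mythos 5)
Your overall strategy (replace $\cA$ by a quasi-equivalent $\cB$ with $\cB^0=\H^0\cA$ and $d\cB^0=0$, then feed $\cB$ into Proposition \ref{coalgconc1} and Theorem \ref{tannakathm}) is exactly the paper's, and your preliminary observations — that $\cS:=\z^0\cA=\H^0\cA$ is a semisimple subcategory with zero differential, and that the naive bar construction relative to $\cS$ would leak into negative degrees via $\cA^0/\cS$ — are correct. But the step you flag as ``the main obstacle'' is left unresolved in your write-up (``I expect this to be resolved either by\ldots''), so as it stands the proof has a gap precisely where the new content of the corollary lies.

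The gap is, however, an artefact of your chosen route rather than of the problem. You do not need homological perturbation or any $A_\infty$ transfer: the complement you construct is already a strict dg \emph{sub}category of $\cA$. Concretely, use semisimplicity of $\H^0\cA$ to choose an $\H^0\cA$-bimodule splitting $\cA^1(-,-)=d\cA^0(-,-)\oplus\cB^1(-,-)$, and set $\cB^0:=\z^0\cA=\H^0\cA$, $\cB^n:=\cA^n$ for $n\ge 2$. Then $\cB$ is closed under the differential ($d\cB^0=0$ and $d\cB^1\subset\cA^2=\cB^2$) and under composition: $\cB^0\circ\cB^0\subset\cB^0$ since cocycles compose to cocycles, $\cB^0\circ\cB^1$ and $\cB^1\circ\cB^0$ land in $\cB^1$ because $\cB^1$ was chosen as a sub-bimodule over $\z^0\cA$, and all remaining products land in degree $\ge 2$ where $\cB=\cA$. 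The inclusion $\cB\into\cA$ is then a quasi-equivalence (it kills exactly the acyclic summand $(\cA^0/\z^0\cA)\xra{\sim}d\cA^0$), and $\cB$ satisfies the hypotheses of Proposition \ref{coalgconc1} on the nose. The point you missed is that cancelling this acyclic pair does not require transferring the multiplication at all, because the complement is automatically multiplicatively closed; once this is seen, the rest of your argument (ind-compactness of $D$ as in Example \ref{HHunivcoalg}, invariance of $\omega^{\vee}\ten^{\oL}_{\cA}\omega$ and of $\ker\omega$ under the quasi-equivalence, and the appeal to Theorem \ref{tannakathm}) goes through as you describe.
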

\begin{proof}
Consider the morphism $d\co \cA^0(-,-) \to \cA^1(-,-)$ of $\H^0\cA$-bimodules. Since $\H^0\cA$ is semisimple, there exists a $\H^0\cA$-bimodule decomposition
\[
 \cA^1(-,-) = d\cA^0(-,-)\oplus \cB^1(-,-).
\]
We may therefore define a dg subcategory $\cB \subset \cA$ by
\[
 \cB^n(-,-):= \left\{\begin{matrix}
                      \cA^n(-,-)& n\ne 0,1 \\
\cB^1(-,-) & n=1\\
\H^0\cA(-,-) & n=0.
               \end{matrix}\right.
\]
Then $\cB \to \cA$ is a quasi-equivalence, and $\cB$ satisfies the conditions of Proposition \ref{coalgconc1}, giving a dg coalgebra $C$  concentrated in non-negative degrees. We then apply Theorem \ref{tannakathm}.
\end{proof}


\begin{definition}\label{D+dgdef}
 For a dg coalgebra $C \in DG^{\ge 0}\Co_n\Alg_k$ , define $\cD^+_{\dg}(C) \subset \cD_{\dg}(C)$ to be the full dg subcategory on cochain complexes $V$ for which $\H^*(V)$ is bounded below.  Write $\cD^+(C):= \H^0\cD^+_{\dg}(C)$

For a $k$-linear  dg category $\cA$ with  $\cA(X,Y)$ concentrated in non-negative degrees, define $\cD_{\dg}^+(\cA) \subset \cD_{\dg}(\cA)$ to be the full  dg subcategory consisting of dg functors $F$ for which $\prod_{X \in \cA}\H^*F(X)$ is bounded below. Write $\cD^+(\cA):= \H^0\cD_{\dg}^+(\cA)$. 
\end{definition}

\begin{proposition}\label{coalgconcequiv}
 Under the conditions of Corollary \ref{coalgconc2}, if the functor $\omega \co \H^0\cA \to \FD\Vect_k$  is faithful, then we have a quasi-equivalence $\cD_{\dg}^+(\cA) \simeq \cD_{\dg}^+(C)$.
\end{proposition}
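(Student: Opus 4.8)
The plan is to deduce the result from Theorem~\ref{tannakathm} by restricting the adjunction $(-\ten_{\cA}P)\dashv\oR\HHom_C(P,-)$ to the bounded-below subcategories. Using Corollary~\ref{coalgconc2} I may replace $\cA$ by the quasi-equivalent dg category $\cB$ of Proposition~\ref{coalgconc1}, and so assume throughout that $\cA$ is non-negatively graded with $d\cA^0=0$, that $\cA^0=\H^0\cA$ is a semisimple abelian category, and that $\omega$ is concentrated in degree $0$. Recall that $FM:=M\ten_{\cA}P\simeq\omega(M)$ since $P\to\omega$ is a quasi-isomorphism.

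The conceptual heart is the following reflection property: \emph{for $M\in\cD^+_{\dg}(\cA)$, the complex $\omega(M)=\omega\ten^{\oL}_{\cA}M$ is acyclic only if $M$ is.} To prove it, suppose $M$ is not acyclic and let $n_0$ be least with $\H^{n_0}(M)\ne0$. From the triangle $\tau^{\le n_0}M\to M\to\tau^{>n_0}M$ and the fact that $\omega\ten^{\oL}_{\cA}(-)$ sends modules in degrees $>n_0$ to complexes in degrees $>n_0$ (as $\omega$ sits in degree $0$ and $\cA$ is non-negatively graded), I get $\H^{n_0}(\omega(M))\cong\H^{n_0}(\omega\ten^{\oL}_{\cA}\tau^{\le n_0}M)$. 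Now $\tau^{\le n_0}M=\H^{n_0}(M)[-n_0]$ lies in the heart, i.e. is an $\cA^0$-module via the augmentation $\cA\to\cA^0$; the lowest nonzero cohomology of a left-derived tensor is the underived one, which for $\cA^0$-modules reduces to $\ten_{\cA^0}$ since $\cA^{>0}$ acts as zero, so this group is $\omega\ten_{\cA^0}\H^{n_0}(M)$. Finally, faithfulness of $\omega$ on the semisimple $\cA^0$ forces $\omega(S)\ne0$ for every simple $S$, whence $\omega\ten_{\cA^0}V\ne0$ for every nonzero $\cA^0$-module $V$; thus $\H^{n_0}(\omega(M))\ne0$. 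Equivalently, $\ker\omega\cap\cD^+_{\dg}(\cA)$ consists exactly of the acyclic modules.

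Next I verify that both functors respect boundedness. For $F$ this is immediate from $FM\simeq\omega(M)$ and the degree count above, giving $F(\cD^+_{\dg}(\cA))\subseteq\cD^+_{\dg}(C)$. For $U=\oR\HHom_C(P,-)$, observe that on each object $X$ the complex $P(X)\simeq\omega(X)$ is a finite-dimensional vector space in degree $0$, while a fibrant replacement of a bounded-below $C$-comodule $N$ is computed by the cobar construction $N\to N\ten C\ten C\ten\cdots$, which stays bounded below because $C\in DG^{\ge0}\Co_n\Alg_k$; hence $U(N)(X)=\HHom_C(P(X),N^{\mathrm{fib}})$ is bounded below with a bound independent of $X$, so $U(\cD^+_{\dg}(C))\subseteq\cD^+_{\dg}(\cA)$.

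It remains to assemble these facts. For $N\in\cD^+_{\dg}(C)$, Proposition~\ref{retractprop} gives $F(UN)\simeq N$, so $F|_{\cD^+_{\dg}(\cA)}$ is essentially surjective onto $\cD^+_{\dg}(C)$. For $M\in\cD^+_{\dg}(\cA)$ the unit $\tilde\eta_M\co M\to UFM$ of Theorem~\ref{tannakathm} has $\cone(\tilde\eta_M)\in\ker\omega$; since both $M$ and $UFM$ lie in $\cD^+_{\dg}(\cA)$ the cone is bounded below, hence acyclic by the reflection property, so $\tilde\eta_M$ is a quasi-isomorphism. Thus $F$ and $U$ are mutually quasi-inverse dg functors between $\cD^+_{\dg}(\cA)$ and $\cD^+_{\dg}(C)$, giving the quasi-equivalence. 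The main obstacle is the reflection property, and within it the bottom-cohomology identity $\H^{n_0}(\omega(M))\cong\omega\ten_{\cA^0}\H^{n_0}(M)$: this is precisely where non-negative grading, semisimplicity of $\cA^0$ and faithfulness of $\omega$ all enter, and it is what upgrades faithfulness on the heart into an equivalence of bounded-below derived categories. The boundedness check for $U$ is the other point needing care, because of the cobar fibrant replacement implicit in $\oR\HHom_C$.
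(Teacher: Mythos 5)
Your argument reaches the result by a genuinely different route from the paper's in its first half. The paper proves the stronger-looking containment $\cD_{\dg}^+(\cB)\subseteq(\ker\omega)^{\perp}$: it represents $\omega|_{\cB^0}$ by an ind-object $T=\LLim T_{\alpha}$ of $(\cB^0)^{\op}$, uses faithfulness to see that the $T_{\alpha}$ generate $\cB^0$, deduces that $\omega(M)\simeq 0$ forces $M\ten_{\cB}\cB^0(-,-)\simeq 0$, and then runs a d\'evissage of any bounded-below module into the $\cB^0$-modules $\pi_*\H^iR$; quasi-full-faithfulness on $\cD^+_{\dg}$ then comes directly from Corollary \ref{coalgconc2}. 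You instead prove the reflection property that bounded-below objects of $\ker\omega$ are acyclic and apply it to $\cone(\tilde{\eta}_M)$. This is a legitimate, arguably more self-contained alternative for that half; the second half (that $U$ preserves bounded-below cohomology) is handled the same way in both.

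Two points need repair. First, the claimed identity $\H^{n_0}(\omega(M))\cong\omega\ten_{\cA^0}\H^{n_0}(M)$ is false in general: for $\cA=k[a]/(a^2)$ with $a$ in degree $1$, $da=0$, and $\omega=V=k$, one computes $k\ten^{\oL}_{\cA}k\cong k^{\oplus\N}$ concentrated in degree $0$, not $k$. What is true, and is all you use, is that $\omega\ten_{\cA^0}\H^{n_0}(M)$ \emph{injects} into $\H^{n_0}(\omega(M))$: in the bar resolution relative to $\cA^0$ the total degree $n_0-1$ vanishes, so there are no boundaries in degree $n_0$, and the level-zero summand $\H^{n_0}(M)\ten_{\cA^0}\omega$ consists of cycles. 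Second, the boundedness of $U$ is glossed. The cobar construction produces a fibrant replacement only for the coderived model structure, not for the model structure of Proposition \ref{modelcomod} that computes $\oR\HHom_C(P,-)$; moreover $\HHom_C(P(X),N')$ is not bounded below as a complex, since $P(X)$ is unbounded above even though it is quasi-isomorphic to $\omega(X)$. As in the paper's proof, one needs Positselski's comparison (bounded-below acyclic complexes of comodules are coacyclic, so derived and coderived Homs into a bounded-below graded-injective target agree) both to see that $N'$ computes the derived Hom and to replace $\HHom_C(P(X),N')$ by $\HHom_C(\omega(X),N')$, which is then visibly concentrated in non-negative degrees.
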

\begin{proof}
We first replace $\cA$ with the dg category $\cB$ from the proof of Corollary \ref{coalgconc2}, so $\cB^0=\H^0\cB$.
  Since $\omega$ is additive and $\cB^0$ semisimple, it follows that $\omega|_{\cB^0}$ is exact, and hence represented by some $T\in \ind((\cB^0)^{\op})$, with $\omega|_{\cB^0} = -\ten_{\cB^0}T$. We may write $T$ as a filtered colimit $T= \LLim T_{\alpha}$ for $T_{\alpha} \in (\cB^0)^{\op}$, and because $\cB^0$ is abelian we may assume that each $T_{\alpha}$ is a subobject of $T$. Since $\cB^0$ is semisimple, this means that $T_{\alpha}$ is a direct summand of $T$. 

Because  $\omega|_{\cB^0}$ is faithful, it follows that the set $\{T_{\alpha}\}_{\alpha}$ generates $\cB^0$.   Now, the dg functor $\omega\co \cD_{\dg}(\cB) \to \C_{\dg}$ is $(-\ten_{\cB}\cB^0(-,-)\ten_{\cB^0}T)$, 
so if $\omega(M) \simeq 0$ then 
\[
 M \ten_{\cB}\cB^0(-,T_{\alpha}) =M\ten_{\cB}\cB^0(-,-)(T_{\alpha}) \simeq 0
\]
 for all $\alpha$ ($T_{\alpha}$ being a direct summand of $T$). 
Since $\{T_{\alpha}\}_{\alpha}$ generates $\cB^0$, it follows that  $M \ten_{\cB}\cB^0(-,-)(X) \simeq 0$ for all objects $X \in \cB$, which is precisely the same as saying that $M \ten_{\cB}\cB^0(-,-)\simeq 0$.

For the natural projection $\pi \co \cB \to \cB^0$, this says that $\pi^*M\simeq 0$;  for any $\cB^0$-module $N$, we then have
$
 \HHom_{\cB}(M, \pi_*N) \simeq 0,
$
so any complex quasi-isomorphic to $\pi_*N$ lies in  $ (\ker \omega)^{\perp}$. Since $(\ker \omega)^{\perp}$ is closed under extensions and homotopy limits, and any $R \in \cD_{\dg}^+(\cB)$ can be recovered from the $\cB^0$-modules $\H^iR$ via these operations, it  follows that  $ R \in (\ker \omega)^{\perp}$.

Thus Corollary \ref{coalgconc2}  shows that the dg functor $-\ten_{\cB}P$ gives a quasi-equivalence from  $\cD^+_{\dg}(\cB)$ to a full dg subcategory of $\cD_{\dg}^+(C)$. It remains to show that for any $N \in \cD_{\dg}^+(C)$, we have $\HHom_C(P, N) \in \cD^+_{\dg}(\cB)$; without loss of generality we may assume $\H^{<0}N=0$. Now, 
\[
 \prod_{X \in \cB} \HHom_C(P, N)(X)=  \prod \HHom_C(X\ten_{\cB}P, N),
\]
 and $ \H^*(X\ten_{\cB}P) \cong \H^*(\omega X)$, which is concentrated in degree $0$. By applying  the cobar resolution in the  proof of \cite[Theorem 4.4]{positselskiDerivedCategories}  to $\tau^{\ge 0}N$, it follows that that $N$ is quasi-isomorphic to a $C$-comodule $N'$  concentrated in non-negative degrees and fibrant in the coderived model structure. Then \cite[Theorem 4.3.1]{positselskiDerivedCategories}  implies that 
$\HHom_C(-, N)\simeq \HHom_C(-, N')$, so  
\[
 \HHom_C(X\ten_{\cB}P, N)\simeq \HHom_C(\H^0(X\ten_{\cB}P), N'),
\]
which
is concentrated in non-negative degrees.
\end{proof}

\begin{definition}\label{Dcodef}
For any dg coalgebra $C$, define $\cD_{\dg}^{\mathrm{co}}(C)$ to be the full dg subcategory of $\C_{\dg}(C)$ on  objects $K$ for which the graded module $K^{\#}$ underlying $K$ is injective as a comodule over the graded coalgebra $C^{\#}$ underlying $C$. 
\end{definition}
Note that from the properties of the model structure of \cite[Theorem 8.2]{positselskiDerivedCategories}, the homotopy category $\H^0\cD_{\dg}^{\mathrm{co}}(C)$ is equivalent to  Positselski's coderived category $\cD^{\mathrm{co}}(C)$. Weak equivalences with respect to this model structure are morphisms whose cone $L$ is coacyclic in the sense of \cite[4.2]{positselskiDerivedCategories} --- this is a stronger condition than acyclicity, and is equivalent to saying that $\HHom_C(L,K)$ is acyclic for all $K \in \cD_{\dg}^{\mathrm{co}}(C)$.

\begin{proposition}\label{coalgconcper}
The equivalence of Proposition \ref{coalgconcequiv} induces a quasi-equivalence between   $\per_{\dg}(\cA)$ (see Definition \ref{perdef}) and the full dg subcategory $\cF_{\dg}(C)$ of  $\cD_{\dg}(C)$ 
on fibrant replacements of $C$-comodules in  finite-dimensional cochain complexes. This gives a quasi-equivalence from $\cD_{\dg}(\cA) $ to $ \cD_{\dg}^{\mathrm{co}}(C)$.
\end{proposition}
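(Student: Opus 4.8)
The plan is to deduce both assertions from the bounded-below equivalence $\cD_{\dg}^+(\cA) \simeq \cD_{\dg}^+(C)$ of Proposition \ref{coalgconcequiv} by first restricting it to the subcategories of ``small'' objects and then re-completing under filtered colimits. Throughout I would work with the functor $-\ten_{\cA}P$ and its derived right adjoint $\oR\HHom_C(P,-)$, replacing $\cA$ by the dg category $\cB$ of the proof of Corollary \ref{coalgconc2} so that $\cB^0 = \H^0\cB$ is semisimple.

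First I would check that $-\ten_{\cA}P$ carries $\per_{\dg}(\cA)$ into $\cF_{\dg}(C)$. Every compact object is bounded below, so $\per_{\dg}(\cA) \subset \cD_{\dg}^+(\cA)$ and the restriction is covered by Proposition \ref{coalgconcequiv}; moreover on a representable we have $h_X\ten_{\cA}P \simeq \omega(X)$, a finite-dimensional cochain complex, hence after fibrant replacement an object of $\cF_{\dg}(C)$. Since $\per_{\dg}(\cA)$ is generated from the $h_X[n]$ under finite extensions and direct summands and $-\ten_{\cA}P$ is exact and preserves these operations, the whole of $\per_{\dg}(\cA)$ lands in $\cF_{\dg}(C)$. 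For essential surjectivity I would argue by dévissage on the comodule side: the cosemisimple subcoalgebra $C_{\red} = (\H^0 C)_{\red}$ identifies the simple $C$-comodules with the simple objects of $\cB^0$, each of which is the image of a representable $h_{V_{\alpha}} \in \per_{\dg}(\cA)$; as a finite-dimensional comodule is a finite iterated extension of shifts of simple comodules and $\per_{\dg}(\cA)$ is closed under shifts, finite extensions and summands, the inverse functor $\oR\HHom_C(P,-)$ sends $\cF_{\dg}(C)$ back into $\per_{\dg}(\cA)$. Full faithfulness is inherited from Proposition \ref{coalgconcequiv}, giving the quasi-equivalence $\per_{\dg}(\cA) \simeq \cF_{\dg}(C)$.

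For the second statement I would pass to ind-completions. By construction $\cD_{\dg}(\cA) = \ind(\per_{\dg}(\cA))$ is the closure of $\per_{\dg}(\cA)$ under filtered colimits, and $-\ten_{\cA}P$ preserves filtered colimits, so it suffices to identify $\cD_{\dg}^{\mathrm{co}}(C)$ with the corresponding closure of $\cF_{\dg}(C)$. Here the relevant input is that the category of $C$-comodules is locally finite, every comodule being the filtered union of its finite-dimensional subcomodules, so that the class of $C^{\#}$-injective comodules defining $\cD_{\dg}^{\mathrm{co}}(C)$ (Definition \ref{Dcodef}) is closed under filtered colimits and every such injective is a filtered colimit of fibrant replacements of finite-dimensional comodules. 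This is exactly why the target is the coderived category rather than $\cD_{\dg}(C)$: a filtered colimit of objects satisfying condition (2) of Proposition \ref{modelcomod} need not satisfy it again, whereas $C^{\#}$-injectivity is preserved. Matching the two filtered systems then upgrades the equivalence of the first part to a quasi-equivalence $\cD_{\dg}(\cA) \simeq \cD_{\dg}^{\mathrm{co}}(C)$.

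The main obstacle I expect is this second step: verifying cleanly that $\cD_{\dg}^{\mathrm{co}}(C)$ is the ind-completion of $\cF_{\dg}(C)$ and that fibrant replacement is compatible with the relevant filtered colimits. The subtlety is that fibrant replacement in the model structure of Proposition \ref{modelcomod} does not commute with filtered colimits on the nose, so one must instead use the coderived model structure of \cite[Theorem 8.2]{positselskiDerivedCategories}, for which the $C^{\#}$-injectives are the fibrant objects and are stable under filtered colimits; the identification of $\cD_{\dg}^{\mathrm{co}}(C)$ as generated under such colimits by the finite-dimensional comodules is what makes the two completions agree.
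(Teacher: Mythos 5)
Your first step takes a genuinely different route from the paper: where you verify directly that $-\ten_{\cA}P$ and $\oR\HHom_C(P,-)$ exchange the two subcategories by tracking generators (representables $\mapsto$ finite-dimensional comodules, simple comodules $\mapsto$ representables), the paper characterises both $\per_{\dg}(\cA)$ and $\cF_{\dg}(C)$ intrinsically as the homotopy-compact objects of $\cD^+_{\dg}(\cA)$ and $\cD^+_{\dg}(C)$, lets the equivalence of Proposition \ref{coalgconcequiv} match them up, and quotes \cite[Theorem 4.3.1(a) and 5.5]{positselskiDerivedCategories} to identify the compacts on the comodule side with $\cF_{\dg}(C)$. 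Your version is workable, but the d\'evissage needs more care than you give it: neither brutal nor good truncation of a finite-dimensional dg comodule is a dg subcomodule, so ``a finite-dimensional comodule is a finite iterated extension of shifts of simple comodules'' is not immediate. It does hold here because the coradical filtration relative to $C_{\red}\subset C^0$ is preserved by the differential (the differential being a morphism of graded comodules) and its graded pieces are finite complexes of $C_{\red}$-comodules, hence quasi-isomorphic to sums of shifts of simples by semisimplicity of $\cB^0$; you should say this.

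The genuine gap is in your second step. The claim that every $C^{\#}$-injective comodule ``is a filtered colimit of fibrant replacements of finite-dimensional comodules'', deduced from local finiteness of the comodule category, is false as stated and does not do the job. A comodule is indeed the filtered union of its finite-dimensional subcomodules, but those subcomodules are not $C^{\#}$-injective, so they are not objects of $\cF_{\dg}(C)$, and replacing them fibrantly destroys the colimit: already for a cofree comodule $V\ten C$ the finite-dimensional pieces $V_i\ten C_j$ lie outside $\cF_{\dg}(C)$ while the injective pieces $V_i\ten C$ are not fibrant replacements of finite-dimensional comodules. What actually identifies $\cD_{\dg}^{\mathrm{co}}(C)$ with $\ind(\cF_{\dg}(C))$ is the statement that the finite-dimensional comodules are \emph{compact generators} of the coderived category, combined with closure of $\cD^{\mathrm{co}}(C)$ under arbitrary direct sums; this is precisely the input \cite[5.5]{positselskiDerivedCategories} that the paper's proof invokes, and your argument cannot avoid it. Once that is in place, the passage to ind-categories goes through as you describe.
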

\begin{proof}
Observe that when filtered colimits exist in $\cD_{\dg}^+(\cA)$, they are homotopy colimits, and that $\Hom_{\cA}(K,-)$ commutes with such limits for all $K \in \per_{\dg}(\cA)$. Since every object of $\cD_{\dg}(\cA)$ can be written as a  filtered colimit of perfect complexes, it follows that $\per_{\dg}(\cA)$ consists of the homotopy-compact objects of $\cD_{\dg}^+(\cA)$ (i.e. the objects $K$ for which $\Hom_{\cD^*}(\cA)(K,-)$ commutes with filtered homotopy colimits, when they exist). They therefore correspond under Proposition \ref{coalgconcequiv} to the homotopy-compact objects of $\cD^+_{\dg}(C)$. 

By \cite[Theorem 4.3.1(a)]{positselskiDerivedCategories}, $\cD^+_{\dg}(C) $   is quasi-equivalent to the full subcategory of $\cD_{\dg}^{\mathrm{co}}(C) $ consisting of cochain complexes which are bounded below. By  \cite[5.5]{positselskiDerivedCategories},    $C$-comodules in  finite-dimensional cochain complexes are compact generators of $D^{\mathrm{co}}(C) $, and hence of $\cD^+(C)$. Thus the essential image of 
\[
 -\ten_{\cA}P \co \per_{\dg}(\cA) \to \cD_{\dg}(C)
\]
is just $ \cF_{\dg}(C)$.

Moreover, since the finite-$k$-dimensional $C$-comodules generate $ \cD^{\mathrm{co}}(C)$ and the latter is closed under arbitrary direct sums, we have  
a quasi-equivalence
\[
 \LLim \co \ind(\cF_{\dg}(C)) \to \cD_{\dg}^{\mathrm{co}}(C),
\]
which combines with the quasi-equivalence $\per_{\dg}(\cA) \to\cF_{\dg}(C)$ above to give a quasi-equivalence $\cD_{\dg}(\cA) \to \cD_{\dg}^{\mathrm{co}}(C) $ on the associated ind-categories.
\end{proof}

\begin{example}
Observe that if $ \ker \omega|_{\H^0(\cA)}=0 $, we  need not have $\ker \omega =0$ on $ \cD_{\dg}(\cA)$. 
We see this by considering an example which is in some respects dual to \cite[Example \ref{HHtannaka1-kellerex}]{HHtannaka1}.
For $t$ of degree $1$,  we can take $k\<t\>$ to be the free non-commutative graded algebra generated by $t$ and let $\cA$  be the full dg subcategory of $\cD_{\dg}(k\<t\>)$ on objects $k\<t\>^n$. Then $\cD_{\dg}(\cA) \simeq \cD_{\dg}(k\<t\>)$, and the dg fibre functor $\omega(M):= k\ten_{ k\<t\>}M$ is faithful on $\cD^+(\cA) $. However, it is not faithful on $\cD(\cA)$, since $k\<t,t^{-1}\>$ lies in the kernel.

The associated dg coalgebra $C$ is Morita equivalent to $k[\eps]^{\vee}$, for $\eps$ of degree $0$ with $\eps^2=0$, so
 a Corollary \ref{coalgconc2} in this case gives
\[
 \cD_{\dg}(k\<t\>)/( k\<t,t^{-1}\>) \simeq \cD_{\dg}(k[\eps]),
\]
while Proposition \ref{coalgconcper} gives an equivalence between $\per_{\dg}(k\<t\>)$ and the dg derived category of finite $k[\eps]$-modules.  The difference between derived and coderived categories in this case can also be seen by noting that the $k[\eps]$-module $k$ is not perfect, but is compact in the coderived category.
\end{example}

\subsection{Koszul duality}\label{koszul}

The correspondence of Proposition \ref{coalgconcper} is a manifestation of Koszul duality between modules and comodules, and can be regarded as a partial generalisation of \cite[Theorem 6.3.a]{positselskiDerivedCategories}. 
In particular, $\cA \leadsto C$ is a cobar construction and $-\ten_{\cA}P$ can be thought of as $\omega\ten^{\tau,S}C$ for the canonical twisting cochain $\tau$.

Rather than fixing a dg category, we now use Koszul duality  to give an equivalence between certain homotopy categories of dg categories and of dg coalgebras. A consequence is that quasi-isomorphisms in the category $DG^{\ge 0}\Co_n\Alg$ (see Definition \ref{dgcoalgndef}) induce quasi-isomorphisms of the associated categories, so are necessarily derived Morita equivalences.

Fix a cosemisimple coalgebra $S$, and  set $\cS$ to be the category of finite-dimensional $S$-comodules, with $\omega$ the forgetful functor to vector spaces. We can then interpret $\cS$-bimodules as $S$-bicomodules, observing that $S= \omega^{\vee}\ten_{\cS}\omega$. Note that a  non-counital coassociative dg $\ten_{\cS}$-coalgebra $\cB$ in $\cS$-bimodules  then corresponds to the non-counital coassociative dg $\ten^S$-coalgebra $B:=\omega^{\vee}\ten_{\cS}\cB\ten_{\cS}\omega$ in $S$-bicomodules. This is equivalent to a coassociative
dg coalgebra structure on $S \oplus B$, 
for which $S \to S \oplus B \to S$ are morphisms of dg coalgebras.

\begin{definition}
Let  $DG^{> 0}\Cat(\cS)$ be the category of dg categories $\cA$ in non-negative degrees with $\cA^0=\cS$ and $d\cA^0=0$, as considered in Proposition \ref{coalgconc1}.
\end{definition}
This is equivalent to the category of associative  $\ten_{\cS}$-algebras $\cA^{>0}(-,-)$ in cochain complexes of $\cS$-bimodules in strictly positive degrees. Applying \cite[Theorem 11.3.2]{Hirschhorn} to the forgetful functor mapping to $\cS$-bimodules,  it follows that  $DG^{> 0}\Cat(\cS)$ has a cofibrantly generated model structure in which weak equivalences are quasi-isomorphisms and fibrations are surjections.

\begin{definition}
 Dually, let $DG^{{\ge 0}}\Co_n\Alg(\cS)$ be the category of non-counital ind-conilpotent coassociative dg $\ten_{\cS}$-coalgebras $B$ in complexes of $\cS$-bimodules in non-negative cochain degrees. 
\end{definition}
By analogy with \cite[Theorem 3.1]{hinstack} and  \cite[Proposition 1.26]{ddt1}, $DG^{{\ge 0}}\Co_n\Alg(\cS)$ has a fibrantly cogenerated model structure in which weak equivalences are quasi-isomorphisms and cofibrations are injective in degrees $>0$.

\begin{definition}\label{betadef}
Write $\beta(\cA)$ for the cofree ind-conilpotent graded $\ten_{\cS}$-coalgebra on generators $\cA^{>0}(-,-)[1]$; thus 
\[
 \beta(\cA)=\bigoplus_{n>0} \underbrace{\cA^{>0}(-,-)\ten_{\cS}\ldots \ten_{\cS}\cA^{>0}(-,-)}_n[n].
\]
We make this a dg coalgebra by defining the differential on cogenerators to be 
\[
 d_{\beta(\cA)}= (d_{\cA}, \circ) \co (\cA^{>0}(-,-)[1]) \oplus (\cA^{>0}(-,-)\ten_{\cS}\cA^{>0}(-,-)[2]) \to \cA^{>0}(-,-)[2];
\]
\end{definition}
Note that the dg coalgebra $C$ of Proposition \ref{coalgconc1} is just the dg coalgebra $S \oplus (\omega^{\vee}\ten_{\cS}\beta(\cA)\ten_{\cS}\omega)$.
This cobar construction defines a functor $\beta \co DG^{> 0}\Cat(\cS) \to DG^{{\ge 0}}\Co_n\Alg(\cS)$, with  Proposition \ref{coalgconc1} saying that $\cD_{\dg}(\cA) \simeq \cD_{\dg}( S\oplus (\omega^{\vee}\ten_{\cS}\beta(\cA)\ten_{\cS}\omega))$.

\begin{definition}\label{beta*def}
 Let  $\beta^*$ be the left adjoint to $\beta$. This is the bar construction sending $C$ to the tensor algebra 
\[
 \beta^*(C)(X,Y) = \bigoplus_{n\ge 0}\underbrace{C(X,-)\ten_{\cS}\ldots \ten_{cS}C(-,Y)}_n[-n],
\]
with differential defined on generators by $d_C + \Delta_C$.
\end{definition}

\begin{remark}
 A key observation is that the filtration of  $\beta(C)$ by powers of $C[-1]$ gives a convergent spectral sequence
\[
 \H^q(C^{\ten p}) \abuts \H^{p+q}(\beta^*C).
\]
Note that convergence of this spectral sequence relies on $\H^{<0}(C)$ vanishing, and allows us to use quasi-isomorphisms for our notion of dg coalgebra weak equivalences where \cite[Theorem 3.1]{hinstack} used $\beta^*$ to reflect weak equivalences.
\end{remark}

\begin{proposition}\label{Koszulequiv}
 The functors $\beta^*\dashv \beta$ are a pair of Quillen equivalences between the categories $DG^{> 0}\Cat(\cS), DG^{{\ge 0}}\Co_n\Alg(\cS)$.
\end{proposition}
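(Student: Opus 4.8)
The plan is to check that $(\beta^*,\beta)$ is a Quillen adjunction and then that both its unit and counit are quasi-isomorphisms; since this forces the derived functors to be mutually inverse equivalences of homotopy categories, the Quillen equivalence follows. A useful preliminary reduction is that every object of $DG^{{\ge 0}}\Co_n\Alg(\cS)$ is cofibrant (the map from the initial object $0$ is vacuously injective in positive degrees) and, dually, every object of $DG^{> 0}\Cat(\cS)$ is fibrant (the map to the terminal object $\cS$ is a surjection). Hence cofibrant and fibrant replacements are unnecessary, the derived unit and counit agree with the underived ones, and it suffices to treat the ordinary unit $\eta_C\co C \to \beta\beta^*C$ and counit $\vareps_{\cA}\co \beta^*\beta\cA \to \cA$.

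For the Quillen adjunction I would show that the left adjoint $\beta^*$ is left Quillen. It preserves cofibrations because it is the free $\ten_{\cS}$-algebra functor applied to the map $C[-1]\to C'[-1]$ of $\cS$-bimodule complexes induced by a cofibration $C\to C'$; over the cosemisimple base $\cS$ such a map is degreewise split, so its free algebra is a cellular extension. That $\beta^*$ also preserves acyclic cofibrations then reduces to the statement that $\beta^*$ preserves all quasi-isomorphisms, which is precisely the Remark preceding the Proposition: filtering $\beta^*C$ by tensor length yields the convergent spectral sequence $\H^q(C^{\ten p})\abuts \H^{p+q}(\beta^*C)$, and since $\cS$ is cosemisimple the functor $\ten_{\cS}$ is exact, so a quasi-isomorphism $C\to C'$ induces an isomorphism on $E_1$ and hence on the abutment; convergence here uses $\H^{<0}(C)=0$.

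The counit $\vareps_{\cA}$ is the bar--cobar resolution. Because $\cA^{>0}(-,-)$ is concentrated in strictly positive degrees, $\cA$ is connected over $\cS$, and the augmentation ideal of $\beta^*\beta\cA$ carries the classical explicit contracting homotopy; equivalently, the filtration by tensor length gives a spectral sequence whose $E_1$-term is the two-sided (acyclic) bar complex of $\cA^{>0}$ over $\cS$, collapsing onto $\cA$. The grading hypothesis guarantees that this filtration is exhaustive and, in each total degree, bounded, so there are no convergence subtleties and $\vareps_{\cA}$ is a quasi-isomorphism.

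The unit $\eta_C$, the cobar--bar resolution, is the main obstacle. I would prove it is a quasi-isomorphism by combining the tensor-length filtration of $\beta\beta^*C$ with the ind-conilpotent weight filtration of $C$ provided by Definition \ref{dgcoalgndef}; on the associated graded the comultiplication part of the differential decouples, leaving an acyclic cofree complex, so $\eta_C$ is a quasi-isomorphism on graded pieces. The delicate point --- exactly the one signalled in the Remark --- is the convergence of the resulting spectral sequence: one must verify that the combined filtration is exhaustive and complete and that each total degree receives only finitely many nonzero contributions. This is where both the non-negative grading ($\H^{<0}(C)=0$) and the ind-conilpotency of $C$ are indispensable, since without them the cobar--bar complex can fail to compute $C$. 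Granting this, $\eta_C$ and $\vareps_{\cA}$ are quasi-isomorphisms for all $C$ and $\cA$; as all objects are cofibrant, respectively fibrant, the total derived functors $\oL\beta^*=\beta^*$ and $\oR\beta=\beta$ are mutually inverse equivalences of $\Ho$, proving that $\beta^*\dashv\beta$ is a Quillen equivalence.
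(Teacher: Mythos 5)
Your reduction to showing that the unit and counit are quasi-isomorphisms is sound (every coalgebra is cofibrant and every dg category is fibrant in these model structures, as you observe), your verification that $\beta^*$ preserves quasi-isomorphisms via the tensor-length spectral sequence is exactly the Remark preceding the Proposition, and your direct bar-resolution argument for the counit $\beta^*\beta\cA \to \cA$ is a legitimate alternative to the paper's route, which instead deduces the counit from the unit at $\beta\cA$ via the triangle identity and tangent spaces.

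The genuine gap is in the unit $\eta_C \co C \to \beta\beta^*C$, precisely at the point you flag and then pass over with ``Granting this''. The difficulty is not only convergence: for a general object $C$ of $DG^{{\ge 0}}\Co_n\Alg(\cS)$ the associated graded of the conilpotency filtration $\ker(C \to C^{\ten n})$ is merely some complex with trivial comultiplication, and your claim that the induced filtration on $\beta\beta^*C$ has associated graded collapsing onto $\gr C$ is the statement to be proved, not a proof of it. The paper circumvents this as follows: it first restricts to \emph{fibrant} $C$ (for which the conilpotency filtration does have associated graded $(\tan C)^{\ten n}$, yielding the convergent spectral sequence $E_1^{pq}= \H^{p+q}((\tan C)^{\ten -p}) \abuts \H^{p+q}(C)$ and hence the fact that tangent quasi-isomorphisms between fibrant objects are quasi-isomorphisms); it quotes \cite[Proposition 11.4.4]{lodayvalletteoperads} for the statement that $\eta_C$ is a tangent quasi-isomorphism when $C$ is fibrant; and it then extends to arbitrary $C$ by two-out-of-three, using that $\beta\beta^*$ preserves quasi-isomorphisms --- which is your own spectral-sequence observation. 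To keep your direct route you would have to either prove the graded claim for arbitrary ind-conilpotent $C$ (adapting the Loday--Vallette argument for $C \to B\Omega C$ to $\cS$-bimodules, using cosemisimplicity of $\cS$ for exactness of $\ten_{\cS}$), or insert the fibrant-replacement step as the paper does.
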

\begin{proof}
 We need to show that for any $C \in DG^{{\ge 0}}\Co_n\Alg(\cS)$, the unit $C \to \beta\beta^*C$ of the adjunction is a quasi-isomorphism, and that for any $\cA \in DG^{> 0}\Cat(\cS)$, the co-unit $\beta^*\beta \cA \to \cA$ is a quasi-isomorphism.

We begin by noting that \cite[Proposition 11.4.4]{lodayvalletteoperads} says that for $C$ fibrant, the unit $ C \to \beta\beta^*(C) $ gives a  quasi-isomorphism on tangent spaces, where $\tan(C) = \ker(\Delta\co C \to C\ten C)$.  Now, $\tan\beta(\cA)= \cA^{>0}(-,-)$, so setting $C= \beta(\cA)$, it follows that the unit $ C \to \beta\beta^*(C) $ gives a quasi-isomorphism $\cA^{>0} \to \beta^*\beta(\cA)^{>0}$, from which it follows that the co-unit is a quasi-isomorphism. 

Now for $C$ fibrant, filtration by the subspaces $\ker(C \to C^{\ten n})$ gives a convergent spectral sequence
\[
 E_1^{pq}= \H^{p+q}((\tan C)^{\ten -p} ) \abuts \H^{p+q}(C),
\]
so cotangent quasi-isomorphisms of fibrant objects are always quasi-isomorphisms, and in particular the unit $ C \to \beta\beta^*(C) $ is a  quasi-isomorphism for fibrant $C$.  Since the functor $\beta\beta^*$ preserves quasi-isomorphisms, the unit must be a quasi-isomorphism for all $C$.
\end{proof}

\begin{remark}\label{Koszulremark}
 The key step in  Proposition \ref{Koszulequiv} invokes Koszul duality in the form of \cite[Proposition 11.4.4]{lodayvalletteoperads}. When the field $k$ has characteristic $0$, there are therefore analogues for any Koszul-dual pair of operads, with cochain dg $\cP$-algebras in strictly positive degrees corresponding to conilpotent $\cP^{\text{\textexclamdown}}$-coalgebras in non-negative degrees. 
\end{remark}

\subsection{Hearts of $t$-structures}\label{heartsn}

For a Morita fibrant dg category $\cD$ to admit a compatible $t$-structure amounts to the existence of a full generating dg subcategory $\cA$  with  $\H^i\cA(X,Y)=0$ for all $i<0$ and all $X,Y$. The objects of $\cA$ are given by any choice of generators for the heart $\H^0\cD^{\heartsuit}$ of the $t$-structure, and in particular we can take $\cA$ to be the full dg subcategory of $\cD$ on the semisimple objects of $\H^0\cD^{\heartsuit}$, in which case $\H^0\cA$ will be abelian semisimple.

We now show how to extend  the results of \S \ref{nonnegsn} to this generality. 

\begin{proposition}\label{filterprop}
 Take a $k$-linear  dg category $\cA$ with the category $\H^0\cA$ abelian semisimple and $\H^{<0}\cA(X,Y)=0$ for all objects $X,Y$. Then $\cA$ is quasi-isomorphic to a dg category $\cB$ concentrated in non-negative degrees, with $d\cB^0(X,Y)=0$ for all $X,Y$.
\end{proposition}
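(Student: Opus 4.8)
The plan is to produce $\cB$ by realising the semisimple heart $\cS:=\H^0\cA$ strictly in cochain degree $0$, and then building up the higher cohomology by attaching cells placed in \emph{strictly positive} degrees, so that $\cB$ is a relative minimal (cofibrant) model of $\cA$ over $\cS$. Since $\cB$ will be concentrated in non-negative degrees, the condition $d\cB^0=0$ is equivalent to $\cB^0=\z^0\cB=\H^0\cB$; so the target is a non-negatively graded dg category with $\cB^0\cong\cS$ together with a quasi-equivalence $\cB\to\cA$ which is the identity on objects.

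The crux is the base case: a \emph{multiplicative} cocycle lift, i.e. a $k$-linear dg functor $s\colon\cS\to\cA$, identity on objects, landing in degree-$0$ cocycles and splitting the projection $p\colon\z^0\cA\to\z^0\cA/\b^0=\cS$. Here $\b^0:=\mathrm{im}(d\colon\cA^{-1}\to\cA^0)$ is a two-sided ideal of the $k$-linear category $\z^0\cA$ (since $z\cdot db=\pm d(z\cdot b)$ and $db\cdot z=\pm d(b\cdot z)$ for $z$ a cocycle), with $\z^0\cA/\b^0=\cS$. I would produce $s$ from semisimplicity in the spirit of Wedderburn--Malcev: as $\cS$ is semisimple, $\mathrm{rad}(\z^0\cA)\subseteq\b^0$, so $\cS$ is a direct factor of the semisimple quotient $\z^0\cA/\mathrm{rad}$, and the nilpotent extension $\z^0\cA\to\z^0\cA/\mathrm{rad}$ splits; composing yields $s$. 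Equivalently, the obstructions to correcting an $\cS$-bimodule splitting of $p$ to a multiplicative one lie in the Hochschild groups $\HH^{\ge1}(\cS,-)$, which vanish because $\cS$ is separable semisimple.

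The role of $s$ is that it makes every $\cA^n$ a complex of $\cS$-bimodules (the action being through the $d$-closed cocycles $s(\cS)$), and the category of $\cS$-bimodules is semisimple. With this in hand I would run the standard cell-attachment construction relative to $\cS$: set $\cB_0=\cS$ and $f_0=s$, and for $m=1,2,\dots$ attach free $\cS$-bimodule generators in degree $m$ (closed, mapping to representatives of a generating set of the semisimple bimodule $\H^m\cA$) together with generators in degree $m-1\ge1$ carrying differentials that kill the superfluous classes, extending $f$ at each stage. The obstructions to extending $f$ and to choosing the attaching differentials lie in positive $\cS$-bimodule $\Ext$ (equivalently Hochschild) groups, which vanish by semisimplicity; and since $\H^0$ is already correct via $s$, every cell sits in a strictly positive degree. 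Taking $\cB:=\LLim_m\cB_m$ gives a dg category concentrated in non-negative degrees with $\cB^0=\cS$ and $d\cB^0=0$, and $f\colon\cB\to\cA$ is by construction an isomorphism on $\H^*\cA(X,Y)$ for all $X,Y$ and the identity on $\H^0$, hence a quasi-equivalence.

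The main obstacle is exactly this base case, the strict multiplicative lift $s$: it is the only step using the full force of semisimplicity (lifting idempotents, or the vanishing of $\HH^{\ge1}(\cS,-)$), and it is the delicate point, since $\b^0$ need not be nilpotent and $\cS$ must be taken separable over $k$ (automatic when $k$ is perfect, and satisfied in the intended applications). Once $s$ is fixed, everything is linear over the semisimple base $\cS$, the relevant bimodule cohomology vanishes in positive degrees, and the positive-degree cell attachment is routine; in particular no cell is ever attached in degree $\le0$, which is what simultaneously guarantees non-negativity and $d\cB^0=0$.
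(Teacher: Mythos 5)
The step you yourself single out as the crux --- the strict multiplicative lift $s\co\cS\to\z^0\cA$ --- is not merely delicate: it can fail to exist, and with it the whole construction collapses, since without $s$ the hom-complexes of $\cA$ carry no strict $\cS$-bimodule structure and the cell attachment cannot begin. Concretely, let $A$ be the one-object dg category (dg algebra) $A=k[x]\oplus k[x]e$ with $e$ of degree $-1$, $e^2=0$ and $de=x^2-x$. Then $\H^{-1}A=0$ because $k[x]$ is a domain, $\z^0A=k[x]$, $\b^0=(x^2-x)$, and $\H^0A\cong k\by k$ is semisimple and even separable; but a $k$-linear multiplicative section of $k[x]\to k\by k$ would have to send the class of $1-x$ to an idempotent of $k[x]$ lifting it, and $k[x]$ has no idempotents other than $0$ and $1$. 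The same example shows why each of your proposed justifications breaks down: $\mathrm{rad}(k[x])=0$, so $\z^0\cA/\mathrm{rad}=k[x]$ is not semisimple and $\cS$ is not a direct factor of it; and both Wedderburn--Malcev and the $\HH^{\ge 1}(\cS,-)=0$ obstruction argument require $\b^0$ to be nilpotent, or $\z^0\cA$ to be complete along it, neither of which holds here. (The proposition itself is fine for this $A$: the quotient map $A\to k\by k$ is already a quasi-isomorphism of dg algebras; what fails is the existence of a \emph{section}.)

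This is precisely the difficulty the paper's proof is organised around avoiding. Rather than splitting $\z^0\cA\to\H^0\cA$, it (i) transports $\cA$, viewed as an associative $\ten_{\tau^{\le 0}\cA}$-algebra, along the quasi-equivalence $\tau^{\le 0}\cA\to\H^0\cA$ to obtain a quasi-isomorphic dg category over $\H^0\cA$; (ii) applies the Rees construction to the truncation filtration to produce a $\bG_m$-equivariant family $\cZ$ over $k[t]$ interpolating between $\cA$ (at $t=1$) and $\gr^{\tau}\cA\simeq\H^*\cA$ (at $t=0$); (iii) builds a non-negatively graded cofibrant bar--cobar model $\cG$ of $\H^*\cA$ over $\cG^0=\H^0\cA$ --- the genuine analogue of your positive-degree cell attachment, but applied to the formal object $\H^*\cA$, where the strict $\H^0\cA$-action is automatic; and (iv) deforms the differential on $\cG^*[t]$ order by order in $t$ to match $\cZ$, the obstructions lying in Hochschild cohomology groups which vanish because the deformation $\cZ$ is already known to exist. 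To rescue your more direct route you would need to replace the strict section by a homotopy-coherent ($A_\infty$) splitting of the $\cS$-action, which is essentially the rectification performed in step (i).
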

\begin{proof}
First, observe that the good truncation filtration $\tau_n= \tau^{\le n}$ for $n \ge 0$ gives quasi-isomorphisms
\[
 \bigoplus_n \gr^{\tau}_n\cA(X,Y) 
 \to    \bigoplus_n\H^n\cA(X,Y)[-n] 
\]
for all $X,Y \in \cA$, giving $\bG_m$-equivariant quasi-isomorphisms of the corresponding dg categories, where $\gr^{\tau}_n$ is assigned weight $n$. 

Now, a dg category $\cB$ over $\tau_0\cA$ on the same objects corresponds to the associative unital $\ten_{\tau_0\cA}$-algebra $\cB(-,-)$ in $\C_{\dg}((\tau_0\cA)^{\op}\ten \tau_0\cA)$. 
Thus the  quasi-isomorphism $\tau_0\cA  \to\H^0\cA$  
ensures that $\cA$ is quasi-isomorphic to some dg category $\cA'$ over $\H^0A$.

Consider the polynomial ring $k[t]$ with $t$ in degree $0$ but equipped with a $\bG_m$-action of weight $1$.
The Rees construction gives us a dg category $\zeta(\cA,\tau)$ with the same objects as $\cA$ and morphisms
\[
 \zeta(\cA,\tau)(X,Y):= \bigoplus_n \tau_n\cA(X,Y)t^n\cong\bigoplus_n \tau_n\cA(X,Y) ,
\]
which then becomes a $\bG_m$-equivariant dg category, flat over $k[t]$. This has the properties that $\zeta(\cA,\tau)/t\cong \gr^\tau\cA$ and $\zeta(\cA,\tau)/(t-1)\cong \cA$. 

Applying the argument above, we see that the $\bG_m$-equivariant dg category $\zeta(\cA,\tau)$ over $\tau_0\cA[t]$ must be  quasi-isomorphic to some $\bG_m$-equivariant dg category $\cZ$ over $\H^0\cA[t]$ (flat over $k[t]$). Note that $\cZ/t$ and $\cZ/(t-1)$ are then quasi-isomorphic to $\gr^\tau\cA$ and $\cA$, respectively. 

Now, set  $\cG^0= \H^0\cA$, then take a cofibrant replacement $\cG^{>0}(-,-)$ of $\H^{>0}\cA(-,-)$ as a $\ten_{\cG^0}$-algebra in $\cG$-bimodules. This can be constructed canonically as a bar-cobar resolution as in Proposition \ref{Koszulequiv}, with the output  concentrated in strictly positive degrees (this relies on the semisimplicity of $\H^0\cA$). Moreover, the $\bG_m$-action on $\H^*(\cA(X,Y)) $  (with $\H^n$ of weight $n$) transfers equivariantly to $\cG:= \cG^0 \oplus \cG^{>0}$.

Since $\cG$ is cofibrant and $\gr^\tau\cA$ is quasi-isomorphic to $\H^*\cA$, we may lift the quasi-isomorphism $\cG \to \H^*\cA$ to give a $\bG_m$-equivariant quasi-isomorphism
\[
 \cG \to \cZ/t.
\]

We now mimic \cite[Propositions \ref{mhs2-relalghgs},\ref{mhs2-strictlift}]{mhs2}.  
Since $\cG(-,-)$ is cofibrant as a unital $\ten_{\cG^0}$-algebra, forgetting the differential gives a retract $\cG^*$ of a freely generated $\bG_m$-equivariant $\ten_{\cG^0}$-algebra. Since $\cZ \to \cZ/t$ is surjective, we may lift the map $\cG \to \cZ/t$ to give a $\bG_m$-equivariant map $f\co \cG^* \to \cZ$ of graded categories, and hence $\cG^*[t] \to \cZ$.

We then consider possible differentials on $\cG^{>0}[t]$ making $f$ into a map of $\ten_{\cG^0}$-algebras.   The proof of \cite[Proposition \ref{dmsch-alghgs}]{dmsch} characterises obstructions to passing from a differential on  $\cG^{>0}[t]/t^r$ to one on $\cG^{>0}[t]/t^{r+1}$ in terms of elements in Hochschild cohomology, 
which necessarily vanish because the lift $\cZ/t^{r+1} \to \cZ/t^r $ exists. In the $\bG_m$-equivariant category, $\cG^*[t]$ is the limit $\Lim_r \cG^*[t]/t^r$, so a suitable differential $\delta$ exists on $\cG^*[t]$, set to $0$ on $\cG^0[t]$.

Writing $\cR=(\cG^*[t], \delta)$ with its differential, we thus have a $\bG_m$-equivariant quasi-isomorphism
\[
 \cR \to \cZ
\]
over $k[t]$ (deformations of quasi-isomorphisms being quasi-isomorphisms), and hence a quasi-isomorphism
\[
 \cR/(t-1)\to \cZ/(t-1) \simeq \cA,
\]
so $\cB:= \cR/(t-1)$ has the required properties.
\end{proof}

\begin{corollary}\label{coalgconc3}
 Take a $k$-linear  dg category $\cA$ with the category $\H^0\cA$ abelian semisimple and $\H^{<0}\cA(X,Y)=0$ for all objects $X,Y$. Assume that we have a $k$-linear dg functor $\omega \co \cA \to \C_{\dg}(k)$ with $\H^*\omega(X)$  finite-dimensional and concentrated in degree $0$ for all $X \in \cA$. Then there is a dg coalgebra $C \in DG^{\ge 0}\Co_n\Alg_k$ with $ C\simeq \omega^{\vee}\ten_{\cA}^{\oL}\omega$,   together with  quasi-equivalences  $\cD_{\dg}(\cA)/(\ker \omega) \simeq  (\ker \omega)^{\perp} \simeq \cD_{\dg}(C)$.
\end{corollary}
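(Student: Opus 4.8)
The plan is to bootstrap from Corollary \ref{coalgconc2}, whose hypotheses differ from ours only in that it asks for a strictly non-negatively graded dg category together with a functor valued in $\FD\Vect_k$, rather than a dg functor whose cohomology merely happens to be finite-dimensional and concentrated in degree $0$. First I would invoke Proposition \ref{filterprop} to replace $\cA$ by a quasi-equivalent dg category $\cB$ concentrated in non-negative degrees with $d\cB^0=0$ and $\cB^0=\H^0\cA$ abelian semisimple. The quasi-equivalence $\cB\to\cA$ is a derived Morita equivalence, so it induces a quasi-equivalence $\cD_{\dg}(\cB)\simeq\cD_{\dg}(\cA)$ carrying $\ker\omega$ to $\ker\omega_{\cB}$, where $\omega_{\cB}$ is the pullback of $\omega$ to $\cB$ (made to land in $\per_{\dg}(k)$ via Remark \ref{hfdrmk}), and identifying $\omega^{\vee}\ten_{\cA}^{\oL}\omega$ with $\omega_{\cB}^{\vee}\ten_{\cB}^{\oL}\omega_{\cB}$. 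It therefore suffices to establish everything for the pair $(\cB,\omega_{\cB})$.

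Next I would set $\omega':=\H^0\omega_{\cB}$, which is a $k$-linear functor $\cB^0=\H^0\cA\to\FD\Vect_k$, and apply Corollary \ref{coalgconc2} to $(\cB,\omega')$. This produces a dg coalgebra $C\in DG^{\ge 0}\Co_n\Alg_k$ with $C\simeq\omega'^{\vee}\ten_{\cB}^{\oL}\omega'$ together with the quasi-equivalences $\cD_{\dg}(\cB)/(\ker\omega')\simeq(\ker\omega')^{\perp}\simeq\cD_{\dg}(C)$. To transfer these statements back to $\cA$ and the original $\omega$, the one genuine point is to compare $\omega_{\cB}$ with $\pi^{*}\omega'$ as objects of $\cD_{\dg}(\cB^{\op})$, where $\pi\co\cB\to\cB^0$ is the projection onto the semisimple part (the quotient by the two-sided ideal $\cB^{>0}$). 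Granting the equivalence $\omega_{\cB}\simeq\pi^{*}\omega'$, one gets $\ker\omega'=\ker\omega_{\cB}$ and $\omega'^{\vee}\ten_{\cB}^{\oL}\omega'\simeq\omega_{\cB}^{\vee}\ten_{\cB}^{\oL}\omega_{\cB}\simeq\omega^{\vee}\ten_{\cA}^{\oL}\omega$, and the three quasi-equivalences above become exactly the asserted ones $\cD_{\dg}(\cA)/(\ker\omega)\simeq(\ker\omega)^{\perp}\simeq\cD_{\dg}(C)$ after composing with $\cD_{\dg}(\cB)\simeq\cD_{\dg}(\cA)$.

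The hard part will be precisely the claim $\omega_{\cB}\simeq\pi^{*}\H^0\omega_{\cB}$, and I would prove it by a minimal-model argument over the semisimple base $\cB^0$. Since $\cB^0$ is semisimple, the $\cB$-module $\omega_{\cB}$ admits a minimal model, that is, an $A_{\infty}$-$\cB$-module structure carried by the $\cB^0$-module $\H^{*}\omega_{\cB}=\omega'$ with vanishing internal differential. The key observation is a degree count: $\omega'$ is concentrated in cohomological degree $0$, while every element of $\cB^{>0}$ has degree $\ge 1$. An operation $m_n$ (with one module argument and $n-1$ arguments from $\cB$) has degree $2-n$, so on inputs consisting of a degree-$0$ element of $\omega'$ and $n-1$ arguments from $\cB^{>0}$ its output sits in degree $\ge 0+(n-1)+(2-n)=1>0$, forcing $m_n=0$ for $n\ge 2$; likewise the $\cB^{>0}$-component of $m_2$ vanishes. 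Hence the minimal model is simply the honest $\cB^0$-module $\omega'$ on which $\cB^{>0}$ acts by zero, which is exactly $\pi^{*}\omega'$. One should then record that all the identifications in the previous paragraph --- in particular $\ker\omega_{\cB}=\ker\omega'$ and the comparison of $\ten^{\oL}_{\cB}$-expressions --- depend only on the class of $\omega_{\cB}$ in $\cD_{\dg}(\cB^{\op})$, so this equivalence is all that is needed to complete the argument.
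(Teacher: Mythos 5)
Your overall route is the paper's: reduce via Proposition \ref{filterprop} to a dg category $\cB$ in non-negative degrees with $d\cB^0=0$ and $\cB^0$ semisimple, show that the pulled-back fibre functor is quasi-isomorphic as a $\cB$-module to $\pi^*\H^0\omega_{\cB}$, and then feed the result into Corollary \ref{coalgconc2}; the bookkeeping in your second paragraph (that $\ker$ and the $\ten^{\oL}_{\cB}$-expressions depend only on the class of $\omega_{\cB}$ in $\cD_{\dg}(\cB^{\op})$) is fine. The difference is in how the key identification $\omega_{\cB}\simeq\pi^*\H^0\omega_{\cB}$ is established, and your version has a gap.

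Your degree count only kills the components of $m_n$ in which \emph{all} $n-1$ algebra arguments lie in $\cB^{>0}$. Since $\cB=\cB^0\oplus\cB^{>0}$ as a graded category, $m_n$ also has mixed components, and for these the count fails: $m_n$ has degree $2-n$, so if $j$ of the arguments lie in $\cB^{>0}$ the output sits in degree $\ge j+2-n$, which is $\le 0$ whenever $j\le n-2$. For instance $m_3(x,a,b)$ with $a\in\cB^0$ and $b\in\cB^1$ lands in degree $0$ and is not forced to vanish, so "$m_n=0$ for $n\ge 3$" does not follow from what you wrote. To repair this you must normalise the minimal model over the semisimple base: choose the contraction of $\omega_{\cB}$ onto $\H^*\omega_{\cB}$ to be $\cB^0$-equivariant (possible precisely because $\cB^0$ is semisimple), whereupon the side conditions $hi=ph=h^2=0$ make the transferred $m_n$ ($n\ge 3$) vanish whenever any argument lies in $\cB^0$, and your degree count then disposes of the remaining all-$\cB^{>0}$ components. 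Note that as written your minimal-model step never uses semisimplicity of $\cB^0$, which is a warning sign, since the conclusion is false over a general $\cB^0$. The paper avoids $A_\infty$-machinery entirely: it uses semisimplicity twice to split $(\omega\circ q)^0=d(\omega\circ q)^{-1}\oplus M^0$ and $M^0=\z^0M\oplus N^0$ as $\cB^0$-modules, producing an explicit zigzag $\omega\circ q\xleftarrow{\sim}M\twoheadrightarrow M/N\cong\H^0(\omega\circ q)$ of strict $\cB$-module quasi-isomorphisms; this is both more elementary and gives a strict (not merely $A_\infty$) functor to $\FD\Vect_k$, which is what Corollary \ref{coalgconc2} takes as input.
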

\begin{proof}
 By Proposition \ref{filterprop}, there is a quasi-isomorphism $q \co\cB \to \cA$ with $\cB$ satisfying the conditions of Proposition \ref{coalgconc1}. It therefore suffices to replace $\omega \circ q\co \cB \to \C_{\dg}(k)$ with a quasi-isomorphic dg functor taking values in $\FD\Vect_k$. Now, since $\cB^0 \simeq \H^0\cA$ is semisimple, we may decompose the $\cB^0$-module $(\omega\circ q)^0$ as $d(\omega\circ q)^{-1} \oplus M^0$ for some  $\cB^0$-module $M^0$. Setting $M^i = (\omega\circ q)^i$ for $i>0$ and $M^{<0}=0$ then gives a quasi-isomorphism $M \to \omega\circ q$ of $\cB$-modules. Next, choose a decomposition $M^0= \z^0M \oplus N^0$ of $\cB^0$-modules, and set $N^i=M^i$ for $i>0$. Since $\H^iM=0$ for $i\ne 0$, it follows that $N$ is acyclic, so $M \to M/N$ is a quasi-isomorphism, and $M/N\cong \H^0(\omega\circ q)$ takes values in $\FD\Vect_k$.  
\end{proof}

\begin{example}[The motivic coalgebra and mixed motives]\label{mothom2}
As in \cite[\S \ref{HHtannaka1-mothomsn}]{HHtannaka1}, consider the  dg functor $E^{\vee} \co \cM_{\dg,\bA^1,c}(F,\Q) \to \C_{\dg}(\Q)$  associated to a mixed Weil homology theory, and let $C$ be the associated dg coalgebra, with quasi-equivalences
\[
 \cD_{\dg}(C) \simeq \cM_{\dg,\bA^1}(F,\Q)/(\ker E) \simeq \cM_{\dg}(F,\Q)/(\ker E).
\]
Similarly, let $C^{\eff}$ be the dg coalgebra associated to  the the restriction of $E$ to effective Beilinson motives, giving
\[
 \cD_{\dg}(C^{\eff}) \simeq \cM_{\dg,\bA^1}^{\eff}(F,\Q)/(\ker E) \simeq \cM_{\dg}^{\eff}(F,\Q)/(\ker E),
\]
where $\cM_{\dg}^{\eff}(F,\Q)$ is the dg category of  cofibrant presheaves of $\Q$-complexes on smooth $F$-schemes.

When $E$ is Betti cohomology, it is shown in \cite[Corollary 2.105]{ayoubGaloisMotivic1}  that $\H^{>0}C=0$ and $\H^{>0}C^{\eff}=0$. By \cite[Lemma 2.145]{ayoubGaloisMotivic1}, existence of a motivic $t$-structure would also imply that $\H^{<0}C=0$, so we would have quasi-isomorphisms $C \to \tau^{\ge 0}C \la \H^0C$ of dg coalgebras, but it is not immediate that these are Morita equivalences.  

However, if a motivic $t$-structure exists, then  $\cM_{\dg,\bA^1}(F,\Q) \subset (\ker E)^{\perp}$
and
applying Corollary \ref{coalgconc3} to the full dg subcategory $\cA$ of $\cM_{\dg,\bA^1,c}(F,\Q)$ or $\cM_{\dg,\bA^1,c}^{\eff}(F,\Q)$ on semisimple objects in the heart of the $t$-structure would yield $N$ or $N^{\eff}$ in $DG^{\ge 0}\Co_n\Alg_k$, Morita equivalent to $C$ or $C^{\eff}$, so by Propositions \ref{coalgconcequiv} and \ref{coalgconcper},
\begin{align*}
 \cM_{\dg,\bA^1}(F,\Q)\simeq \cD_{\dg}^{\mathrm{co}}(N), \quad \cM_{\dg,\bA^1}^{\eff}(F,\Q)\simeq \cD_{\dg}^{\mathrm{co}}(N^{\eff}),\\
\cM_{\dg,\bA^1}^+(F,\Q)\simeq \cD_{\dg}^+(N), \quad \cM_{\dg,\bA^1}^{\eff,+}(F,\Q)\simeq \cD_{\dg}^+(N^{\eff}),
\end{align*}
where $ \cM_{\dg,\bA^1}^+(F,\Q)$ is the full dg subcategory of $\cM_{\dg,\bA^1}(F,\Q) $ consisting of objects in $\bigcup_n \cM_{\bA^1}(F,\Q)^{\ge n}$ for the motivic $t$-structure.

By \cite[Corollary 2.105]{ayoubGaloisMotivic1}, the morphisms $\H^0N \to N$ and $\H^0N^{\eff} \to N^{\eff}$ would then be  quasi-isomorphisms, hence  Morita equivalences by Proposition \ref{Koszulequiv}, and we would have
\[
 \cM_{\dg,\bA^1}(F,\Q)\simeq \cD_{\dg}^{\mathrm{co}}(\H^0N), \quad \cM_{\dg,\bA^1}^{\eff}(F,\Q)\simeq \cD_{\dg}^{\mathrm{co}}(\H^0N^{\eff}).
\]
Letting $\cM\cM_F$ and $\cM\cM_F^{\eff}$ be the categories of $\H^0N$- and $\H^0N^{\eff}$-comodules in finite-dimensional vector spaces, and $\cD_{\dg}(\cM\cM_F)$ and $\cD_{\dg}(\cM\cM_F^{\eff})$ the dg enhancements of their derived categories,
we would then have
\[
 \cM_{\dg,\bA^1}(F,\Q)\simeq \cD_{\dg}(\cM\cM_F),\quad \cM_{\dg,\bA^1}^{\eff}(F,\Q)\simeq \cD_{\dg}(\cM\cM_F^{\eff}),
\]
so existence of  a motivic $t$-structure would automatically realise $\cM_{\dg,\bA^1}(F,\Q)$ as the dg derived category of an abelian category of mixed motives, implying the $K(\pi,1)$ conjecture of \cite[(1.2)]{BlochKriz}.

Moreover, the full dg subcategory $\cM_{\dg}^{\eff,\heartsuit} \subset \cM_{\dg}^{\eff} $ consisting of objects $M$ with $\H^*E(M)$ concentrated in degree $0$ contains $\ker E$, so the heart of the $t$-structure would be
\[
 \cM_{\dg,\bA^1}^{\eff}(F,\Q)^{\heartsuit}\simeq  \cM_{\dg}^{\eff}(F,\Q)^{\heartsuit}/\ker E= \cM_{\dg}^{\eff}(F,\Q)^{\heartsuit}/(\ker \H^0E).
\]
Passing to the associated triangulated categories $\cM:= \H^0\cM_{\dg}$  would give 
\[
\cM\cM_F^{\eff}\simeq  \cM_{\bA^1}^{\eff}(F,\Q)^{\heartsuit} \simeq \cM^{\eff}(F,\Q)^{\heartsuit}/(\ker \H^0E).
\]

Nori's abelian category $\cM\cM_{\Nori}^{\eff}$ featuring in \cite[Remark \ref{HHtannaka1-motayoub1}]{HHtannaka1} is defined by forming a diagram $D^{\eff}$ of good pairs, and taking the universal $\Q$-linear abelian category under $D^{\eff}$ on which $\H^0E$ is faithful. The proof of \cite[Corollary 1.7]{HuberMuellerStach} gives a functor from $D^{\eff}$ to $ \H^0\cG^{\eff} \subset \cM^{\eff}_c(\Q,\Q)^{\heartsuit}$ and \cite[Proposition D.3]{HuberMuellerStach} gives a quasi-inverse to the induced functor
\[
 \cM\cM_{\Nori}^{\eff} \to \cM^{\eff}_c(\Q,\Q)^{\heartsuit}/(\ker \H^0E),
\]
so existence of a motivic $t$-structure would also imply
\[
 \cM\cM_{\Nori}^{\eff} \simeq \cM\cM_{\Q}^{\eff}. 
\]
Hanamura has shown in \cite{hanamuraMMotIII} that the existence of such a $t$-structure would follow from Grothendieck's standard conjectures, Murre's conjectures and a generalisation of the Beilinson--Soul\'e vanishing conjecture. 

Following \cite[Example \ref{HHtannaka1-mothomb}]{HHtannaka1}, all these coalgebras can be promoted to bialgebras, with the equivalences of categories preserving monoidal structures. In particular, $\H^0C$ is a naturally a Hopf algebra and $\H^0C^{\eff}$ a bialgebra. Writing $G_{\mot}(F):= \Spec \H^0C$ would then give a motivic Galois group, and letting $\cM\cM_F$ be the abelian tensor category of finite-dimensional $G_{\mot}(F)$-representations, the equivalence
\[
 \H^0\cM_{\dg,\bA^1}(F,\Q)\simeq \cD(\cM\cM_F), 
\] 
would become monoidal, as would the equivalence
\[
 \cM\cM_{\Q}\simeq \cM\cM_{\Nori}
\]
induced from $ \cM\cM_{\Q}^{\eff}\simeq \cM\cM_{\Nori}^{\eff}$ by stabilisation. In particular, this would imply that $G_{\mot}(\Q)$ is Nori's motivic Galois group from \cite[Theorem 1.14]{HuberMuellerStach}. This last result has since been proved in \cite{ChoudryGallauer} without assuming existence of a motivic $t$-structure.
\end{example}

\subsection{Tensor categories}\label{tensorsubsn} 

For the remainder of this section, we require that the field $k$ be of characteristic $0$.

\subsubsection{Non-negatively graded dg tensor categories}\label{nonnegtensn}

Assume that  $\cT$ is a rigid tensor dg category over $k$ (i.e. a symmetric monoidal dg category with strong duals), with $\cS:=\cT^0$ a rigid tensor subcategory. Define the dg $\cS^{\op}$-module $B \in \C(\cS^{\op})$ by $B(U):= \cT(U,{\mathbbm{1}})$. Note that tensor properties ensure that $\cT(U,V)= \cT(U\boxtimes V^{\vee},{\mathbbm{1}})= B(U\boxtimes V^{\vee})$. 

Thus $B \co \cS \to \C(k)$ is a symmetric lax monoidal functor, or equivalently a unital commutative algebra object in $\C(\cS^{\op})$. This is the same as saying that $B$ is a  DGA over $\cS$ in the sense of \cite[Definition 3.2]{higgs}.

If $\cS$ is a semisimple abelian category and $\omega \co \cS \to \FD\Vect$ a faithful symmetric monoidal functor, then \cite[Ch. II]{tannaka} shows that $\cS$ is equivalent to the category $\Rep(R)$ of finite-dimensional $R$-representations for the pro-reductive affine group scheme
\[
 R:= \Spec \omega^{\vee}\ten_{\cS} \omega.
\]
Equivalently, this is the category of finite-dimensional $O(R)$-comodules, where $O(R)$ is the Hopf algebra $\omega^{\vee}\ten_{\cS} \omega$.

As observed in \cite[Remark 3.15]{higgs}, the category of  DGAs over $\cS$ is equivalent to the category  of $R$-equivariant commutative dg algebras. Under this correspondence, $B$ corresponds to $A:= B(O(R))$ (regarding the right $R$-representation $O(R)$ as an object of $\ind(\cS)$, with the $R$-action on $A$ coming from the left action on $O(R)$). When this is concentrated in non-negative cochain degrees, note that it defines a schematic homotopy type in the sense of \cite{schematicv2}. For the inverse construction, we have $B(V)= A\ten^{R}V$  and $\cT(U,V)= \cS(U\ten A,V)= \Hom_R(V, U\ten_k A)$.

\begin{definition}
 Define $DG^{\ge 0}\Hopf_n\Alg_k $ to consist of (commutative but not necessarily cocommutative) dg Hopf algebras $C$ for which the underlying dg $k$-coalgebra lies in the category $DG^{\ge 0}\Co_n\Alg_k$ of Definition \ref{dgcoalgndef}. In other words, $C$ is concentrated in  non-negative cochain degrees, with $\H^0C \to C$ ind-conilpotent.
\end{definition}

\begin{proposition}\label{hopfalgconc1}
Take a $k$-linear  rigid tensor  dg category $\cT$ with  $\cT(X,Y)$ concentrated in non-negative degrees, $d\cT^0(X,Y)=0$ for all $X,Y$, and
$\cT^0$  a  semisimple rigid tensor subcategory. Assume that we have a strong monoidal $k$-linear functor $\omega \co \cT^0 \to \FD\Vect_k$. Then there is a model for the  bialgebra $C\simeq \omega^{\vee}\ten_{\cT}^{\oL}\omega$ of \S \ref{univbialg} with $C \in DG^{\ge 0}\Hopf_n\Alg_k$.
\end{proposition}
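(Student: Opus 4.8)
The plan is to construct $C$ exactly as in Proposition \ref{coalgconc1}, with $D=N\CCC(\cT/\cT^0,i^{\op}\ten i)$ for $i\co\cT^0\into\cT$, and then to supply the two pieces of structure that proposition does not address: the bialgebra multiplication and an antipode. For the multiplication, I would first check that $D$ is a universal commutative bialgebra in the sense of \S\ref{univbialg}. Because $\mathbbm{1}\in\cT^0$ and $\cT^0$ is closed under $\boxtimes$, the structure maps $\cT(X,Y)\ten_k\cT(X',Y')\to\cT(X\boxtimes X',Y\boxtimes Y')$ pass to the internal tensor products over $\cT^0$ occurring in $\uline{\CCC}_n(\cT/\cT^0,i^{\op}\ten i)=\cT(-,-)\ten_{\cT^0}(\cT^{>0})^{\ten_{\cT^0}n}\ten_{\cT^0}\cT(-,-)$; this is the relative analogue of Example \ref{HHunivbialg}, and the verification that the induced product is associative, unital, compatible with comultiplication and counit, and commutative (as $\boxtimes$ is symmetric) is the computation of Proposition \ref{bialg} performed over $\cT^0$ rather than over $k$. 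Applying Lemma \ref{Calglemma} to this universal commutative bialgebra $D$ and the symmetric strong monoidal functor $\omega$ then makes $C:=\omega^{\vee}\ten_{\cT}D\ten_{\cT}\omega$ a commutative dg bialgebra, while Proposition \ref{coalgconc1} exhibits its underlying dg coalgebra as an object of $DG^{\ge 0}\Co_n\Alg_k$. Thus $C$ is already a commutative bialgebra in $DG^{\ge 0}\Co_n\Alg_k$, and only the antipode remains.

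For the antipode I would argue that the dg monoid $\Spec C$ is automatically a group. Decompose $C=C_{\red}\oplus N$ as in \S\ref{nonnegsn}, with $C_{\red}=(\H^0C)_{\red}$ and $N$ ind-conilpotent. Since $d\cT^0=0$ and $\cT$ is concentrated in non-negative degrees, we have $\H^0\cT=\cT^0$, so $\cT^0$ is a rigid semisimple tensor category with fibre functor $\omega$; the classical Tannaka theorem \cite[Ch.\ II]{tannaka} then identifies its dual $\omega^{\vee}\ten_{\cT^0}\omega$ with $O(R)$ for a pro-reductive affine group scheme $R$, and this cosemisimple Hopf algebra is exactly $C_{\red}$ (the inclusion $\omega^{\vee}\ten_{\cT^0}\omega\subset\H^0C$ of Proposition \ref{coalgconc1} being an ind-conilpotent extension, hence an isomorphism onto the maximal cosemisimple subcoalgebra). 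The antipode of $O(R)$ therefore equips $C_{\red}$ with an antipode, and the complementary summand $N$, being ind-conilpotent, corresponds to a pro-unipotent dg group scheme. The projection $\Spec C\to R$ thus exhibits $\Spec C$ as an extension of a group by a pro-unipotent group, which is again a group; concretely, combining the antipode of $C_{\red}$ with the convergent convolution (Neumann) series for $1\circ\vareps-\id_C$ on the conilpotent part produces a two-sided convolution inverse $S$ of $\id_C$. As $C$ is commutative, $S$ is an algebra map, so $C\in DG^{\ge 0}\Hopf_n\Alg_k$. This $S$ is precisely the map induced by the duality involution $\rho$ of the Remark following Proposition \ref{bialg}, which descends to the relative normalised complex because the duality functor $(-)^*$ preserves $\cT^0$.

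The main obstacle is this last step, so I would spend most of the effort there, and in particular on explaining why an antipode exists now when it did not for the unrestricted Hochschild bialgebra of Proposition \ref{bialg}. There the degree-zero part was $\uline{\CCC}_0=\bigoplus_X\End(\omega X)$, on which $\rho$ fails the antipode identity $\omega(\eps_X)\circ f_{X^*\boxtimes X}=f_1\circ\omega(\eps_X)$; in the relative construction the degree-zero part is instead the genuine Hopf algebra $O(R)=\omega^{\vee}\ten_{\cT^0}\omega$, rigidity of $\cT^0$ being exactly what forces that identity on the $R$-equivariant elements, while everything outside $C_{\red}$ lies in the ind-conilpotent, pro-unipotent part where inverses are automatic. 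The remaining care is to confirm that the convolution inverse built degree by degree is compatible with the differential and with the transition maps of the ind-conilpotent filtration $C=\LLim_\alpha C_\alpha$, so that $S$ is a genuine morphism of dg Hopf algebras.
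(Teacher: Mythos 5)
Your construction of the underlying commutative dg bialgebra---taking $D=N\CCC(\cT/\cT^0,i^{\op}\ten i)$, checking it is a universal commutative bialgebra by running the computation of Proposition \ref{bialg} over $\cT^0$ instead of over $k$, and then invoking Lemma \ref{Calglemma} together with Proposition \ref{coalgconc1}---matches the paper's construction in substance. Where you genuinely diverge is the antipode. The paper uses the dictionary of \S\ref{nonnegtensn}: rigidity and semisimplicity of $\cT^0$ identify $\cT$ with $\Rep(R,A)$ for a commutative $R$-equivariant dg algebra $A$ in strictly positive degrees, whence $C\cong\bigoplus_{n\ge 0}A^{\ten n}\ten O(R)$ is the reduced bar construction of $A$ twisted by $O(R)$, and the antipode is simply written down as the signed reversal $\rho(a_1\ten\cdots\ten a_n\ten 1)=(-1)^n(a_n\ten\cdots\ten a_1\ten 1)$ combined with the antipode of $O(R)$ via the semidirect tensor product. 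You instead argue abstractly: $C_{\red}=\omega^{\vee}\ten_{\cT^0}\omega=O(R)$ is a Hopf algebra by classical Tannaka duality, $C$ is ind-conilpotent over it, and a Takeuchi-type convolution-invertibility argument produces $S$, which is automatically a chain map (uniqueness of convolution inverses) and, by commutativity, an algebra map. Both routes work. The paper's buys an explicit formula that is reused later (e.g.\ the identification $C=O(R\ltimes\bar{G}(A))$ in Remark \ref{cfhtpybar}); yours is more conceptual and better explains why an antipode exists here but not for the absolute Hochschild bialgebra of Proposition \ref{bialg}. Two small caveats: the inversion step should be phrased as an induction up the coradical filtration \`a la Takeuchi rather than a single Neumann series, since the coradical $O(R)$ is not trivial and $\id-u$ is not locally convolution-nilpotent on it; and your closing identification of $S$ with the duality involution $\rho$ of the Remark following Proposition \ref{bialg} is an unproved (though plausible, and unnecessary) extra claim.
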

\begin{proof}
We just take the coalgebra $C$ from the proof of Proposition \ref{coalgconc1}, and observe that the formulae of \cite[\S \ref{HHtannaka1-monoidalenv}]{HHtannaka1} adapt to define a coproduct $\Delta$ and antipode $\rho$ on $C$, making it into a dg Hopf algebra.

Explicitly, writing $\cS= \cT^0$, the expression $\cT(U,V)= \cS(U\ten A,V)$ above allows us to rewrite 
\[
 D= \bigoplus_{n \ge 0}  A^{\ten n+2}\ten O(R)\quad C= \bigoplus_{n\ge 0} A^{\ten n}\ten O(R),
\]
with 
\[
 \Delta(a_1 \ten \ldots \ten a_n \ten 1)= \sum_{0 \le r \le n} (a_1 \ten \ldots \ten a_r \ten 1)\ten (a_{r+1} \ten \ldots \ten a_n \ten 1)
\]
and
\[
 \rho(a_1 \ten \ldots \ten a_n \ten 1)= (-1)^n(a_n \ten \ldots \ten a_1 \ten 1),
\]
and with multiplication given by the shuffle product. The coalgebra structure on $C$ is then given as the semidirect tensor product of $\bigoplus_{n\ge 0} A^{\ten n}$ and  $O(R)$.
\end{proof}

\begin{definition}\label{repRAdef}
 Given a commutative unital dg algebra $A$ in $R$-representations, define the dg category $\Rep(R,A)$ to have $R$-representations in finite-dimensional vector spaces as objects, with morphisms given by
\[
 \Rep(R,A)(U,V):=A\ten^R(U\ten_k V^{\vee}).
\]
Multiplication is induced by multiplication in $A$, with identities $1_A\ten \id_V \in A\ten^R(V\ten_k V^{\vee})$.
\end{definition}

\begin{remark}\label{cfhtpybar}
In the notation of \cite[Remark \ref{htpy-bargrmk}]{htpy}),  the dg Hopf algebra corresponding to $\cT$ is given by $O( R \ltimes \bar{G}(A))=O( R \ltimes \bar{G}(\cT(O(R),{\mathbbm{1}}))) $.

Then Propositions \ref{coalgconcequiv} and \ref{coalgconcper}  give  quasi-equivalences
\begin{eqnarray*}
 \cD_{\dg}^+(\Rep(R,A)) &\simeq& \cD_{\dg}^+(O( R \ltimes \bar{G}(A))),\\
\cD_{\dg}(\Rep(R,A)) &\simeq& \cD_{\dg}^{\mathrm{co}}(O( R \ltimes \bar{G}(A))).
 \end{eqnarray*}
 \end{remark}

\subsubsection{Koszul duality for tensor categories}

Now fix a pro-reductive affine group scheme $R$.

\begin{definition}
Let  $DG^{> 0}\Cat^{\ten}(R)$ be the category of rigid tensor dg categories $\cT$ in non-negative degrees with $\cT^0$ the category of finite-dimensional $R$-representations and $d\cT^0=0$.
\end{definition}
Note that under the discussion of \S \ref{nonnegtensn},   $DG^{> 0}\Cat^{\ten}(R)$ is equivalent to the category $DG^{> 0}\Comm(R)$ of commutative $R$-equivariant dg algebras $A$ in strictly positive cochain degrees. There is a model structure on $DG^{> 0}\Cat^{\ten}(R)$ 
in which weak equivalences are quasi-isomorphisms and fibrations are surjections.

\begin{definition}
 Dually, let $DG^{{\ge 0}}\Hopf_n\Alg(R)$ be the category of $R$-equivariant  ind-conilpotent dg Hopf algebras $N$ in non-negative cochain degrees. 
\end{definition}
Note that these correspond to Hopf algebras $C$ equipped with maps $O(R) \to C \to O(R)$, such that $O(R) \to C$ is ind-conilpotent. The correspondence sends $N$ to the tensor product $O(R) \ten N$, with comultiplication given by semidirect product. 

Moreover, as in \cite[Theorem B.4.5]{QRat}, ind-conilpotent dg Hopf algebras $N$ correspond to ind-conilpotent dg Lie coalgebras $L$, with $L= \tan(N)$. Thus $DG^{{\ge 0}}\Hopf_n\Alg(R)$ is equivalent to the category $DG^{{\ge 0}}\Co_n\Lie(R)$  of    ind-conilpotent dg Lie coalgebras $L$ in non-negative cochain degrees. It is thus equivalent to the category $dg\hat{\cN}(R)$ of \cite[Definition 4.1]{htpy}, so is a model for relative Malcev homotopy types over $R$.

By analogy with \cite[Theorem 3.1]{hinstack} and  \cite[Proposition 1.26]{ddt1}, there is then a model structure on $DG^{\ge 0}\Hopf_n\Alg_k $ in which weak equivalences are quasi-isomorphisms and cofibrations induce injections on tangent spaces in degrees $>0$. 

Now,  the functor $\beta$   of Definition \ref{betadef}  preserves tensor structures, so induces  $\beta \co DG^{> 0}\Cat^{\ten}(R) \to DG^{{\ge 0}}\Hopf_n\Alg(R)$. Equivalently, we have $\beta_{\ten} \co DG^{> 0}\Comm(R) \to DG^{{\ge 0}}\Co_n\Lie(R)$ given by setting $\beta_{\ten}(A)$ to be the cofree ind-conilpotent graded Lie coalgebra on cogenerators $A[1]$, with differential defined on cogenerators by
\[
 d_{\beta_{\ten}(A)}= (d_{A}, \circ) \co A[1] \oplus (\Symm^2A)[2] \to A[2],
\]
noting that $(\Symm^2A)[2] = \bigwedge^2(A[1])$. 

Moreover, the commutative and Lie operads are Koszul duals, and $\beta$ has 
 a left adjoint $\beta^*_{\ten}$, given by
\[
 \beta^*_{\ten}(L) = \bigoplus_{n>0} \Symm^n(L[-1]),
\]
with differential given on the generators by $d_L \oplus \Delta \co L[-1] \to L \oplus (\bigwedge^2L)[-1]$ where $\Delta$ is the Lie cobracket.

Thus Remark \ref{Koszulremark} and the comparisons above  allow us to adapt Proposition \ref{Koszulequiv} to give:
\begin{proposition}\label{Koszulequiv2}
 The functors $\beta^*_{\ten}\dashv \beta_{\ten}$ give a pair of Quillen equivalences between the categories $DG^{> 0}\Cat^{\ten}(R), DG^{{\ge 0}}\Hopf_n\Alg(R)$.
\end{proposition}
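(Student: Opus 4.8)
The plan is to run the proof of Proposition \ref{Koszulequiv} essentially verbatim, replacing the coassociative Koszul self-duality by the Koszul duality between the commutative and Lie operads, which is available because $\Char k=0$ (Remark \ref{Koszulremark}), and carrying an $R$-action throughout. By the comparisons of \S\ref{nonnegtensn}, the categories $DG^{>0}\Cat^{\ten}(R)$ and $DG^{\ge 0}\Hopf_n\Alg(R)$ are respectively equivalent to $DG^{>0}\Comm(R)$ and $DG^{\ge 0}\Co_n\Lie(R)$, and under these equivalences the functors $\beta,\beta^*$ become $\beta_{\ten},\beta^*_{\ten}$; it therefore suffices to show that $\beta^*_{\ten}\dashv\beta_{\ten}$ is a Quillen equivalence between the latter two categories. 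The crucial structural input is that $R$ is pro-reductive, so the category of finite-dimensional $R$-representations is semisimple and the functor $\ten^R$ is exact; this is exactly the role played by the semisimplicity of $\cS$ in Proposition \ref{Koszulequiv}, and it ensures that each step of that argument survives equivariantly, with quasi-isomorphisms as weak equivalences in the stated model structures.

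First I would record that $\beta^*_{\ten}\dashv\beta_{\ten}$ is a Quillen adjunction: the left adjoint $\beta^*_{\ten}(L)=\bigoplus_{n>0}\Symm^n(L[-1])$ is quasi-free, hence cofibrant, while $\beta_{\ten}(A)$ is cofree as a Lie coalgebra, hence fibrant, and one checks on (co)generators that $\beta^*_{\ten}$ sends (trivial) cofibrations to (trivial) cofibrations. It then remains to prove that the derived unit and co-unit are quasi-isomorphisms, which I would do in the same order as before.

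For the co-unit I would invoke the $\Comm$--$\Lie$ analogue of \cite[Proposition 11.4.4]{lodayvalletteoperads}, which for fibrant $L$ asserts that the unit $L\to\beta_{\ten}\beta^*_{\ten}L$ is a quasi-isomorphism on tangent spaces $\tan(-)=\ker(\Delta)$. Since $\beta_{\ten}(A)$ is cofree it is fibrant with $\tan\beta_{\ten}(A)=A[1]$; applying $\tan$ to the triangle identity $\beta_{\ten}(\vareps_A)\circ\eta_{\beta_{\ten}A}=\id_{\beta_{\ten}A}$ and using $\tan\beta_{\ten}(\vareps_A)=\vareps_A[1]$, the fact that $\tan(\eta_{\beta_{\ten}A})$ is a quasi-isomorphism forces $\vareps_A[1]$, and hence the co-unit $\vareps_A\co\beta^*_{\ten}\beta_{\ten}A\to A$, to be a quasi-isomorphism. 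For the unit I would first treat fibrant $L$: the coradical (conilpotent) filtration of $L$ yields a convergent spectral sequence whose $E_1$-page is built functorially from $\H^*(\tan L)$ via the cofree Lie coalgebra and which abuts to $\H^*(L)$ — convergence relying, exactly as in Proposition \ref{Koszulequiv}, on $\H^{<0}L=0$ — so that a tangent quasi-isomorphism between fibrant objects is automatically a quasi-isomorphism; combined with the \cite[11.4.4]{lodayvalletteoperads}-analogue this shows $\eta_L$ is a quasi-isomorphism for fibrant $L$. Finally, since $\beta_{\ten}\beta^*_{\ten}$ preserves quasi-isomorphisms (again by the convergence just used), the unit is a quasi-isomorphism for every $L$.

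The main obstacle I anticipate is purely the operadic input: one must verify that the bar--cobar Koszul-duality statement of \cite[11.4.4]{lodayvalletteoperads}, established there for a general Koszul pair of operads, does apply to the $\Comm$--$\Lie$ pair in the $R$-equivariant, non-negatively graded, ind-conilpotent setting, and that the two spectral sequences converge. Both points reduce to characteristic zero (so that $\Comm$ and $\Lie$ are genuinely Koszul dual), together with exactness of $\ten^R$ and the non-negative grading that bounds the filtrations; no difficulty beyond that already handled in Proposition \ref{Koszulequiv} should arise.
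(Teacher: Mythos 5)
Your proposal is correct and follows essentially the same route as the paper: the paper proves this proposition precisely by citing Remark \ref{Koszulremark} and "adapting Proposition \ref{Koszulequiv}", i.e.\ replacing the associative self-duality by the $\Comm$--$\Lie$ Koszul duality in characteristic $0$, checking the co-unit via tangent spaces and the unit via the convergent coradical-filtration spectral sequence (relying on $\H^{<0}=0$), exactly as you do. Your added details (the triangle-identity deduction for the co-unit and the explicit role of semisimplicity of $\Rep(R)$) are faithful elaborations of the paper's sketch rather than a different argument.
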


Using the characterisation of $DG^{> 0}\Cat^{\ten}(R)$ in terms of commutative dg algebras and  $DG^{{\ge 0}}\Hopf_n\Alg(R)$ in terms of dg Lie algebras, this result is effectively one of the equivalences of \cite[Theorem \ref{htpy-bigequiv}]{htpy}.

\subsubsection{Hearts of tensor $t$-structures}

\begin{proposition}\label{hopffilterprop}
 Take a $k$-linear rigid tensor dg category $\cT$ with the category $\H^0\cT$ abelian semisimple and $\H^{<0}\cT(X,Y)=0$ for all objects $X,Y$. Then $\cT$ is quasi-isomorphic to a rigid tensor dg category $\cB$ concentrated in non-negative degrees, with $d\cB^0(X,Y)=0$ for all $X,Y$.
\end{proposition}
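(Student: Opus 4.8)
The plan is to run the proof of Proposition \ref{filterprop} essentially verbatim, but to keep track of the symmetric monoidal structure at each stage and to replace the associative constructions by their commutative counterparts, which behave well because $k$ has characteristic $0$. The cleanest way to guarantee that the output is again \emph{rigid} tensor, rather than merely symmetric monoidal, is to phrase everything through the dictionary of \S\ref{nonnegtensn}: choosing a fibre functor $\omega$ on the semisimple rigid abelian tensor category $\H^0\cT$ and applying \cite[Ch. II]{tannaka} identifies it with $\Rep(R)$ for the pro-reductive group $R := \Spec(\omega^\vee\ten\omega)$, and rigid tensor dg categories $\cT$ with $\cT^0 = \H^0\cT$ then correspond to commutative $R$-equivariant dg algebras $A = \cT(O(R),\mathbbm{1})$. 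Since every $R$-equivariant commutative dg algebra produces a rigid tensor dg category, staying inside this picture makes rigidity automatic. (In the non-neutral case one works instead with Deligne's fundamental gerbe, but the argument is identical.)

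First I would reduce to the case $\cT^0 = \H^0\cT$ with $d\cT^0 = 0$. As in Proposition \ref{filterprop}, the good-truncation filtration $\tau^{\le n}$ ($n\ge 0$) furnishes $\bG_m$-equivariant quasi-isomorphisms $\gr^\tau\cT \simeq \H^*\cT$, and the quasi-isomorphism $\tau_0\cT \to \H^0\cT$ lets us replace $\cT$ by a tensor dg category over $\H^0\cT$. The point to check here is that $\tau^{\le \bullet}$ is an \emph{algebra} filtration compatible with $\boxtimes$: this holds because $da = db = 0$ forces $d(ab)=0$, so the product respects the filtration and descends to the associated graded. Consequently the Rees construction $\zeta(\cT,\tau)$ over $k[t]$ is a flat $\bG_m$-equivariant commutative dg algebra with $\zeta/t \simeq \gr^\tau\cT \simeq \H^*\cT$ and $\zeta/(t-1)\simeq \cT$, lifting to an $R$-equivariant commutative $\cZ$ over $\H^0\cT[t]$.

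Next I would build a cofibrant replacement $\cG = \cG^0 \oplus \cG^{>0}$ of $\H^*\cT$ concentrated in strictly positive degrees above $\cG^0$, but now as a commutative $R$-equivariant dg algebra. Here the associative bar--cobar resolution used in Proposition \ref{filterprop} is replaced by the commutative one, i.e. the composite $\beta^*_{\ten}\beta_{\ten}$ of Proposition \ref{Koszulequiv2} (equivalently a minimal Sullivan-type model). Because $\Char k = 0$ and $R$ is reductive, taking $R$-invariants is exact and the Koszul duality of the commutative and Lie operads (Remark \ref{Koszulremark}) keeps this resolution in strictly positive degrees. One then lifts the $\bG_m$-equivariant quasi-isomorphism $\cG \to \cZ/t$ to a differential $\delta$ on the free graded $R$-equivariant commutative algebra $\cG^*[t]$ mapping to $\cZ$, and sets $\cB := (\cG^*[t],\delta)/(t-1)$, which is then a rigid tensor dg category in non-negative degrees with $d\cB^0 = 0$ and $\cB \simeq \cT$.

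I expect the main obstacle to be this last lifting step. The obstructions to extending $\delta$ from $\cG^*[t]/t^r$ to $\cG^*[t]/t^{r+1}$ no longer lie in the Hochschild groups of \cite[Proposition \ref{dmsch-alghgs}]{dmsch} but in their commutative (André--Quillen, equivalently Lie-coalgebra) analogue; one must identify these as the correct deformation-theoretic obstruction groups and check they vanish because the lifts $\cZ/t^{r+1}\to\cZ/t^r$ exist. This is precisely where the hypotheses $\Char k = 0$ and the reductivity of $R$ are used, through the same Koszul duality between the commutative and Lie operads that underlies Proposition \ref{Koszulequiv2}.
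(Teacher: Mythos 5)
Your proposal is correct and follows essentially the same route as the paper's proof: reduce to $\cT^0=\H^0\cT$ via good truncation, translate through \S\ref{nonnegtensn} to a commutative dg algebra over $\cS$, run the Rees construction and the commutative bar--cobar resolution $\beta^*_{\ten}\beta_{\ten}$ of Proposition \ref{Koszulequiv2}, and replace the Hochschild obstruction groups of Proposition \ref{filterprop} by their Andr\'e--Quillen analogues. The only cosmetic difference is that the paper phrases the dictionary directly in terms of DGAs in $\C(\cS^{\op})$ rather than fixing a fibre functor to pass to $R$-equivariant algebras, which sidesteps the neutrality issue you flag in your parenthetical.
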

\begin{proof}
 If we write $\cS:= \H^0\cT$, we see that the dg tensor categories $\cS$ and $\tau^{\le 0}\cT$ are quasi-isomorphic, from which it follows that $\cT$ is quasi-isomorphic to some rigid tensor  dg category $\cT'$ over $\cS$. Via the discussion of  \S \ref{nonnegtensn},  this is equivalent to giving a  commutative  dg algebra $A$ in $\C(\cS^{\op})$ with $\H^0A=k$.

We now apply the Rees algebra construction $\zeta(A, \tau)$ to the good truncation filtration on $A$, regarding the Rees algebra as a deformation of $\zeta(A, \tau)/t= \H^*(A)$. The bar-cobar resolution $\beta^*_{\ten}\beta_{\ten}(\H^{>0}A)$ of Proposition \ref{Koszulequiv2} gives a cofibrant replacement for $\H^{>0}A$ concentrated in strictly positive degrees. The proof of Proposition \ref{filterprop} now adapts, substituting Andr\'e--Quillen cohomology for Hochschild cohomology.
\end{proof}

\begin{corollary}\label{hopfalgconc3}
Take a $k$-linear rigid tensor dg category $\cT$ with the category $\H^0\cT$ abelian semisimple and $\H^{<0}\cT(X,Y)=0$ for all objects $X,Y$. Assume that we have a lax monoidal $k$-linear dg  functor $\omega \co \cT \to \C_{\dg}(k)$ with $\H^i\omega(X)=0$ for all $i \ne 0$, $\H^0\omega(X)$ finite-dimensional for all $X \in \cT$, and quasi-strong in the sense that the structure maps
\[
 \omega(X)\ten_k \omega(Y) \to \omega(X\ten Y), \quad k \to \omega({\mathbbm{1}}) 
\]
quasi-isomorphisms for all $X,Y \in \cT$.

Then there is a dg Hopf algebra $C\in DG^{\ge 0}\Hopf_n\Alg_k$ with $C \simeq \omega^{\vee}\ten_{\cT}^{\oL}\omega$,   together with a tensor dg functor $ \cD_{\dg}(\cT) \to \C_{\dg}(C)$ inducing
   quasi-equivalences  $\cD_{\dg}(\cT)/\ker \omega \simeq  (\ker \omega)^{\perp} \simeq \cD_{\dg}(C)$.  Here, $\C_{\dg}(C)$ and $\cD_{\dg}(C)$ are defined using the coalgebra (not the algebra) structure of $C$. 

If the functor $\H^0\omega \co \H^0\cT \to \FD\Vect_k$  is faithful, then these induce  quasi-equivalences $\cD_{\dg}^+(\cT) \simeq \cD_{\dg}^+(C)$ and $\cD_{\dg}(\cT) \simeq \cD_{\dg}^{\mathrm{co}}(C)$.
\end{corollary}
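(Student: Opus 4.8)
This is the tensor (Hopf-algebra) analogue of Corollary \ref{coalgconc3}, and the plan is to run that argument while keeping track of the monoidal structure, using the constructions of \S\ref{nonnegtensn}. First I would apply Proposition \ref{hopffilterprop} to produce a rigid tensor dg category $\cB$, concentrated in non-negative degrees with $d\cB^0=0$ and $\cB^0=\H^0\cT$ a semisimple rigid tensor subcategory, together with a monoidal quasi-equivalence $q\co\cB\to\cT$. Composing, $\omega\circ q\co\cB\to\C_{\dg}(k)$ is again lax monoidal and quasi-strong, with $\H^*(\omega\circ q)$ finite-dimensional and concentrated in degree $0$; thus we are reduced to the hypotheses of Proposition \ref{hopfalgconc1}, except that the fibre functor is not yet strong monoidal nor $\FD\Vect_k$-valued.

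The crux is to replace $\omega\circ q$ by a strong monoidal functor valued in $\FD\Vect_k$. I would take $\bar\omega:=\H^0(\omega\circ q)$. As the target $\FD\Vect_k$ lies in degree $0$, this kills $\cB^{>0}$ and hence factors through $\cB^0$, giving a lax monoidal functor $\bar\omega\co\cB^0\to\FD\Vect_k$. The point is that, because $\omega$ is quasi-strong and $\H^*\omega$ is concentrated in degree $0$, applying $\H^0$ to the lax structure maps $\omega(X)\ten_k\omega(Y)\to\omega(X\boxtimes Y)$ and $k\to\omega(\mathbbm{1})$ turns them into isomorphisms, so $\bar\omega$ is genuinely strong monoidal. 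The good truncation $\tau^{\le 0}(\omega\circ q)$ provides quasi-isomorphisms $\bar\omega\simeq\omega\circ q$ of $\cB$-modules, so by invariance of the derived Hochschild construction along the quasi-equivalence $q$ we get $\bar\omega^{\vee}\ten_{\cB}^{\oL}\bar\omega\simeq\omega^{\vee}\ten_{\cT}^{\oL}\omega$.

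Feeding $\bar\omega$ into Proposition \ref{hopfalgconc1} yields a universal bialgebra $D$, a tilting module $P=D\ten_{\cB}\bar\omega$, and a dg Hopf algebra $C=\bar\omega^{\vee}\ten_{\cB}D\ten_{\cB}\bar\omega\in DG^{\ge 0}\Hopf_n\Alg_k$ with $C\simeq\omega^{\vee}\ten_{\cT}^{\oL}\omega$. The equivalences $\cD_{\dg}(\cT)/\ker\omega\simeq(\ker\omega)^{\perp}\simeq\cD_{\dg}(C)$ are then exactly those of Corollary \ref{coalgconc3} (equivalently Theorem \ref{tannakathm}) applied to $(\cB,\bar\omega)$ and transported along $q$, with $\C_{\dg}(C),\cD_{\dg}(C)$ formed from the coalgebra structure of $C$ as required. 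Since $D$ is a universal bialgebra and $\bar\omega$ is strong monoidal, Proposition \ref{monoidalprop} shows that $-\ten_{\cB}P\co\cD_{\dg}(\cB)\to\C_{\dg}(C)$ is lax monoidal with invertible structure maps, i.e.\ a tensor functor; composing with $q$ gives the asserted tensor functor $\cD_{\dg}(\cT)\to\C_{\dg}(C)$, and the tensor structure on $\C_{\dg}(C)$ is that coming from the bialgebra structure via Lemma \ref{Calglemma}.

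For the final assertion, note that $\H^0\omega$ restricted to $\H^0\cT=\cB^0$ is precisely $\bar\omega$, so faithfulness of $\H^0\omega$ is exactly the hypothesis needed to invoke Propositions \ref{coalgconcequiv} and \ref{coalgconcper} for the underlying coalgebra of $C$; these give $\cD_{\dg}^+(\cB)\simeq\cD_{\dg}^+(C)$ and $\cD_{\dg}(\cB)\simeq\cD_{\dg}^{\mathrm{co}}(C)$, which transport along $q$ to the stated equivalences for $\cT$, compatibly with tensor structures by the monoidal comparison above. I expect the main obstacle to be verifying the strong monoidality of $\bar\omega$: the tensor machinery of Propositions \ref{hopfalgconc1} and \ref{monoidalprop} genuinely requires a strong monoidal fibre functor, and it is only the quasi-strong hypothesis on $\omega$ together with the degree-$0$ concentration of $\H^*\omega$ that upgrades the a priori merely lax structure on $\H^0$ to an isomorphism.
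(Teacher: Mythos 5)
Your proposal is correct and follows the paper's proof essentially verbatim: reduce via Proposition \ref{hopffilterprop} to a non-negatively graded rigid tensor dg category $\cB$, replace the fibre functor by $\H^0(\omega\circ q)$, and feed the result into the non-negative-degree machinery (Corollary \ref{coalgconc2}/Proposition \ref{hopfalgconc1}, then Propositions \ref{monoidalprop}, \ref{coalgconcequiv} and \ref{coalgconcper}). The only divergence is in how the replacement step is justified: the paper passes to the commutative $A$-algebra $M$ in $\C(\cS^{\op})$ corresponding to $\omega\circ q$ and uses cofibrancy of $A$ to make the zigzag $M\la\tau^{\le 0}M\to\H^0M$ one of $A$-algebras, whereas you truncate the functor directly as a $\cB$-module and verify strong monoidality of $\H^0(\omega\circ q)$ by applying $\H^0$ (with K\"unneth over the field $k$) to the quasi-strong structure maps --- which suffices for the statement as written, and you correctly identify this upgrade from lax to strong as the crux.
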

\begin{proof} 
 By Proposition \ref{hopffilterprop}, there is a tensor quasi-isomorphism $q \co\cB \to \cA$ with $\cB$ satisfying the conditions of Proposition \ref{coalgconc1}. Write $\cS:= \cB^0 \simeq \H^0\cA$. 

We now show how to replace $\omega \circ q\co \cB \to \C_{\dg}(k)$ with $\H^0(\omega \circ q)$. If  $A$ is the  commutative  dg algebra in $\C(\cS^{\op})$ corresponding to $\cB$, then $\omega \circ q$ corresponds to an $A$-algebra $M$ in $\C(\cS^{\op}) $. Since $\H^*M(X)$ is concentrated in degree $0$ for all $X$, we have quasi-isomorphisms $M \la \tau^{\le 0}M \to \H^0M$ of algebras in $\C(\cS^{\op}) $. As $A$ is cofibrant, the map $A \to M$ is homotopic to a map taking values in $\tau^{\le 0}M$, so $M$ and $\H^0M$ are quasi-isomorphic as $A$-algebras.

Since we now have a monoidal functor $\H^0(\omega \circ q)\co \cB^0 \to \FD\Vect_k$, the
 first statement of the corollary follows from  Corollary \ref{coalgconc2},  with \cite[Proposition \ref{HHtannaka1-monoidalprop}]{HHtannaka1} ensuring that the tensor structure is preserved. The second statement is then an immediate consequence of Propositions \ref{coalgconcequiv} and \ref{coalgconcper}.
\end{proof}

\subsection{Comparison with Moriya}\label{moriyasn}

\begin{definition}\label{hatH0def}
 For a dg category $\cA$ satisfying the conditions of Corollary \ref{coalgconc2}, write $\widehat{\H^0\cA} \subset \cD(\cA)$ for the subcategory generated by $\H^0\cA$ under finite extensions (but not by suspensions). This is the completion functor of \cite[\S 2.2]{moriya}. 
\end{definition}
Note that all objects of $\widehat{\H^0\cA}$ are perfect, so we also have a natural embedding $\ind(\widehat{\H^0\cA}) \to \cD(\cA)$. Under the conditions of Proposition \ref{coalgconcequiv}, recall that  Proposition \ref{coalgconcper} gives an equivalence between $\cD^+(\cA)$ and $\cD^+(C)$, with $C$ concentrated in non-negative degrees, 
and that $\H^0\cA$ is equivalent to the category of  semisimple $\H^0C$-comodules in finite-dimensional vector spaces. It then follows that $\widehat{\H^0\cA}$ is equivalent to the category of all   $\H^0C$-comodules in finite-dimensional vector spaces, and $\ind(\widehat{\H^0\cA})$ is equivalent to the category of all   $\H^0C$-comodules in  vector spaces.

 In \cite[Definition 3.1.1]{moriya}, Moriya  gives a notion of Tannakian dg category. Given a rigid dg tensor category $\cA$ satisfying the conditions of Corollary \ref{coalgconc2} with a tensor functor $\omega \co \cA^0 \to \FD\Vect$ such that $\omega|_{\H^0\cA}$ is faithful (as in Proposition \ref{coalgconcequiv}),
  we could form a Tannakian dg category in Moriya's sense by taking $\hat{\cA}$ to be the full dg subcategory of $\cD_{\dg}(\cA)$ on objects $\widehat{\H^0\cA}$.

Note that because $\hat{\cA}$ is a full generating subcategory of $\per_{\dg}(\cA)$, it follows that $\per_{\dg}(\cA) \to \per_{\dg}(\hat{\cA})$ is a quasi-equivalence, so Theorem \ref{tannakathm} gives the same output for both $\cA$ and $\hat{\cA}$. In topological contexts, this just amounts to saying that semisimple local systems generate all local systems under extension.

In \cite[\S 3.3]{moriya}, functors $T^{\ss}$ and $T$ are defined  from  commutative unital dg algebras $A$ in $R$-representations to  dg categories. In fact, $T^{\ss}(R,A)= \Rep(R,A)$ as given in Definition \ref{repRAdef}, and $T(R,A)= \widehat{T^{\ss}(R,A)}$. We have therefore defined Morita equivalences
\[
 \Rep(R,A) \to T(R,A) \to\per_{\dg}(\Rep(R,A)),
\]
so 
$\Rep$ and $T$ give rise to the same theory.
However, $T(R,A)$ feels like a halfway house between the minimal choice $\Rep(R,A)$ and the fibrant replacement $\per_{\dg}(\Rep(R,A)) $.

Moriya's analogue of the construction in Theorem \ref{tannakathm} is the construction $A_{\red}$ of \cite[Definition 3.3.3]{moriya}, but this has only limited  functoriality, which is a well-known limitation of working with equivariant DGAs. By allowing dg coalgebras to have negative terms,  \cite[Example \ref{HHtannaka1-HHunivbialg}]{HHtannaka1} gives us a completely functorial choice of the dg Hopf algebra $C(\cT, \omega)$ corresponding under \cite[Theorem \ref{htpy-bigequiv}]{htpy} to Moriya's $A_{\red}(\cT, \omega)$. 

\section{Schematic and Relative Malcev homotopy types}\label{htpysn}


\subsection{de Rham  homotopy types}\label{derhamsn}

 Take a pointed connected manifold $(X,x)$, and choose a full rigid tensor subcategory $\cS$ of the category of real finite-dimensional semisimple local systems on $X$. Let $\cT$ be the real dg tensor category with the same objects as $\cS$, but with morphisms
\[
 \cT(U,V)= A^{\bt}(X, U\ten V^{\vee}),
\]
where $A^{\bt}(X,-)$ is the de Rham complex. Note that $\H^0\cT \simeq \cS$.

Now, the basepoint $x$ defines a fibre functor $x^* \co \cT^0 \to \FD\Vect_{\R}$ sending $U$ to $U_x$. We are therefore in the setting of Corollary \ref{hopfalgconc3}. Moreover, $x^*\co \cS \to \FD\Vect$ is faithful because  $X$ is connected, so the conditions of Proposition \ref{coalgconcequiv} are satisfied.

Let  $R$ be the real pro-algebraic group $\Spec ((x^*)^{\vee}\ten_{\cS}x^*)$; as in \S \ref{tensorsubsn}, we have an equivalence $x^*\co \cS \to \Rep(R)$ of tensor categories. Equivalently, we have a Zariski-dense group homomorphism $\rho \co \pi_1(X,x) \to R(\R)$. As in \cite{htpy}, write $\bO(R)$ for the local system corresponding to the right $R$ ind-representation $O(R)$. The $R$-equivariant dg algebra $A$ from \S \ref{tensorsubsn} is then just the dg algebra
\[
 A^{\bt}(X, \bO(R))
\]
  of equivariant cochains from \cite[Definition \ref{htpy-bo}]{htpy}, so $\cT$ is equivalent to $\Rep(R, A^{\bt}(X, \bO(R)))$.  

\begin{definition}
 Write $\sA_X^{\bt}$ for the sheaf of real $\C^{\infty}$ differential forms on $X$, regarded as a sheaf of dg algebras with standard differential $\sA^n_X \to \sA^{n+1}_X$. Note that $A^{\bt}(X,U)=\Gamma(X, U\ten_{\R}\sA^{\bt}_X)$.
\end{definition}

\begin{definition}\label{derivedconns}
 Define the  dg category $\cP(X)$ to consist of locally perfect $\sA^{\bt}_X$-modules in complexes of sheaves on $X$. 
 Define $\cP(X, \cS)$ to be the full dg subcategory of $\cP(X)$ generated under shifts and extensions by objects of the form $U\ten_{\R}\sA^{\bt}_X$ for $U \in \cS$.
 \end{definition}
 
Note that because $\sA^{\bt}_X$ is a flabby resolution of $\R$, the dg category $\cP(X)$ is quasi-equivalent to the category of locally constant hypersheaves in real complexes on $X$.

\begin{lemma}\label{PXlemma}
 When $\cS$ consists of all semisimple local systems, we have $\cP(X,\cS)= \cP(X)$.
\end{lemma}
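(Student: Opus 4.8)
The plan is to prove the non-trivial inclusion $\cP(X)\subseteq \cP(X,\cS)$, since $\cP(X,\cS)$ is by construction a full dg subcategory of $\cP(X)$. I would work through the quasi-equivalence recorded above between $\cP(X)$ and the dg category of locally constant hypersheaves of real complexes on $X$, under which $U\ten_{\R}\sA^{\bt}_X$ corresponds to the local system $U$ placed in degree $0$. Local perfection then says that an object $M$ has only finitely many non-zero cohomology sheaves $\H^i(M)$, each a finite-dimensional local system, so the strategy is to reduce $M$ first to its cohomology local systems and then to the semisimple ones, by two successive filtration arguments.

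First I would use the good truncation filtration to express $M$ as an iterated extension of the shifts $\H^i(M)[-i]$: the triangles $\tau^{\le i-1}M\to\tau^{\le i}M\to\H^i(M)[-i]$ show that $M$ is generated under shifts and extensions by the local systems $\H^i(M)$, reducing the claim to showing that every finite-dimensional local system $L$ lies in $\cP(X,\cS)$. For this, since $X$ is connected and pointed, I would identify $L$ with a finite-dimensional real representation $V$ of $\pi_1(X,x)$ and invoke a Jordan--H\"older composition series $0=V_0\subset V_1\subset\cdots\subset V_m=V$ with irreducible, hence semisimple, quotients $V_j/V_{j-1}\in\cS$. Tensoring the short exact sequences $0\to V_{j-1}\to V_j\to V_j/V_{j-1}\to 0$ with $\sA^{\bt}_X$ then exhibits $L\ten_{\R}\sA^{\bt}_X$ as an iterated extension of the generators $U\ten_{\R}\sA^{\bt}_X$ with $U\in\cS$, whence $L\in\cP(X,\cS)$.

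The main point to pin down is the compatibility between extensions in the two worlds: I need that tensoring a short exact sequence of local systems with $\sA^{\bt}_X$ over $\R$ (which is exact, as $\R$ is a field) produces a genuine cofibre sequence of $\sA^{\bt}_X$-modules, so that abelian-category extensions of local systems are realised as extensions in the dg category $\cP(X)$; this is precisely what lets the Jordan--H\"older filtration be interpreted inside $\cP(X,\cS)$. I would also need to verify that under the stated quasi-equivalence the cohomology sheaves of a locally perfect object really are finite-dimensional local systems and that the truncation triangles make sense at the level of hypersheaves. Neither step is deep, but correctly transporting the two filtrations through the identification of $\cP(X)$ with locally constant hypersheaves is where the care is needed; the representation-theoretic heart, namely that every finite-dimensional representation of $\pi_1(X,x)$ is built from irreducibles by extensions, is entirely elementary.
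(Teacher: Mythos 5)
Your overall dévissage — reduce a locally perfect object to its cohomology local systems by truncation, then reduce an arbitrary local system to semisimple ones by a Jordan--H\"older filtration — matches the shape of the paper's argument, and the second step (every local system is an iterated extension of semisimple ones, and tensoring short exact sequences of local systems with $\sA^{\bt}_X$ realises these as extensions in $\cP(X)$) is exactly what the paper does at the end. The gap is in the first step. The lemma asserts a literal equality of full dg subcategories of $\sA^{\bt}_X$-modules, and the paper proves it by exhibiting, for each $\sV^{\bt}\in\cP(X)$, an honest finite filtration by $\sA^{\bt}_X$-\emph{submodules} whose graded pieces are built from the generators. Your route through the quasi-equivalence with locally constant hypersheaves can only ever show that $\sV^{\bt}$ is \emph{quasi-isomorphic} to an object of $\cP(X,\cS)$: the truncation triangles live in the homotopy category of hypersheaves, and transporting them back produces the middle term only up to quasi-isomorphism, whereas $\cP(X,\cS)$ is defined by generation under shifts and extensions, not closure under quasi-isomorphism.

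The reason you cannot simply perform the truncation on the module side instead is the concrete obstruction you would hit: $\tau^{\le i}\sV^{\bt}$ is \emph{not} an $\sA^{\bt}_X$-submodule of $\sV^{\bt}$, because multiplying an element of $\sV^{i-1}$ by a $1$-form lands in $\sV^{i}$ with no reason to be closed. Handling this is the actual content of the proof, not a routine compatibility check. The paper's device is to pass to $\bar{\sV}^{\bt}:=\sV^{\bt}\ten_{\sA^{\bt}_X}\sA^0_X$, a finite-rank complex of $\C^{\infty}$-bundles, truncate \emph{that}, and induce back up: $W_n\sV^{*}:=(\tau^{\le n}\bar{\sV}^{*})\ten_{\sA^0_X}\sA^{*}_X$, using the splitting $\sA^0_X\to\sA^{\bt}_X\to\sA^0_X$ to identify $\sV^{*}\cong\bar{\sV}^{*}\ten_{\sA^0_X}\sA^{*}_X$ as graded modules. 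One then checks that the full differential $\nabla=\delta+D$ preserves $W_n$, and that each graded piece is a two-term extension of flat bundles $(U^i\ten_{\R}\sA^{\bt}_X,\id\ten d\pm D)$ with $U^i$ a local system — note these graded pieces are extensions of two shifted local systems, not single cohomology sheaves as in your picture. If you want to salvage your approach you must either carry out this induction-up construction, or strengthen the statement you are willing to prove to ``the inclusion $\cP(X,\cS)\hookrightarrow\cP(X)$ is a quasi-equivalence,'' which is weaker than what is claimed (though sufficient for the paper's later applications).
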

\begin{proof}
Given $\sV^{\bt} \in \cP(X)$, the sheaf $\bar{\sV}^{\bt} :=\sV^{\bt}\ten_{\sA^{\bt}_X}\sA^0_X$ is a finite rank complex of $\C^{\infty}$-vector bundles on $X$. We then form the good truncation filtration $\{\tau^{\le n}\bar{\sV}^{\bt}\}_n$. Now, the morphisms $\sA^0_X \to \sA^*_X \to \sA^0_X$ of sheaves of dg algebras give an isomorphism
\[
 \sV^*\cong \bar{\sV}^*\ten_{\sA^0_X}\sA^*_X       
\]
of graded $\sA^*_X$-modules (where we write $U^*$ for the graded object underlying a complex $U^{\bt}$). We then define an increasing  filtration $\{W_n\sV^*\}_n$ on $\sV^*$ by
\[
     W_n\sV^*=    (\tau^{\le n}\bar{\sV}^*)\ten_{\sA^0_X}\sA^*_X. 
\]

Writing $\nabla$ for the differential on $\sV^{\bt}$ and $\delta$ for the differential on $\bar{\sV}^{\bt}$, we can set $\nabla= \delta+D$, for some $D\co \sV^n \to \bigoplus_{i\ge 0} \sV^{n-i}\ten_{\sA^0_X} \sA^{i+1}_X$. By construction, $\delta W_n \subset W_n$, and we automatically have $D W_n \subset W_n$, so $\{W_n\sV^{\bt}\}_n$ defines a filtration on $\sV^{\bt}$.

Let $\sU^{\bt}$ be the quotient $W_n\sV^{\bt}/W_{n-1}\sV^{\bt}$;
 the  only non-zero terms of $\bar{\sU}$ are $\bar{\sU}^{n-1}, \bar{\sU}^n$. For $\nabla=D + \delta$ as above, we have  flat connections $D \co \bar{\sU}^i \to \bar{\sU}^i\ten_{\sA^0_X}\sA^1$  and a map $\delta \co \sU^{n-1} \to \sU^n$ commuting with $D$. 
 Thus there exist local systems $U^{n-1}, U^n$ with   $(\bar{\sU}^i, D)= (U^i\ten_{\R}\sA^0_X, \id_U\ten d)$. Because any local system is an extension of semisimple local systems, it follows that the $\sF_i:= (\bar{\sU}^i\ten_{\sA^0_X}\sA^{\bt}_X , \id \ten d \pm D\ten \id)$ lie in $\cP(X,\cS)$. Since $\sU^{\bt}$ is an extension of $\sF_{n-1}[1-n]$ by $\sF_n[-n]$, we also have $ W_n\sV^{\bt}/W_{n-1}\sV^{\bt}\in \cP(X,\cS)$. As the filtration is finite ($\bar{\sV}^*$ being of finite rank), this means that $\sV^{\bt} \in \cP(X,\cS)$, as required.
\end{proof}

\begin{lemma}\label{integratelemma}
The dg categories $\per_{\dg}(\cT)$ and $\cP(X,\cS)$ are quasi-equivalent. 
\end{lemma}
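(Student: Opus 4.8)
The plan is to construct an explicit fully faithful dg functor from $\cT$ into $\cP(X,\cS)$ and then identify $\cP(X,\cS)$ as the Morita fibrant replacement of $\cT$, so that the comparison follows from the universal property of $\per_{\dg}$.

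\emph{The comparison functor.} First I would define $\Phi\co \cT \to \cP(X,\cS)$ on objects by $\Phi(U) = U\ten_{\R}\sA^{\bt}_X$, equipped with the flat connection as differential; these are exactly the generators of $\cP(X,\cS)$. On morphisms $\Phi$ is the tautological identification: since $V\ten_{\R}\sA^{\bt}_X$ is a free $\sA^{\bt}_X$-module, the sheaf-$\cHom$ of $\sA^{\bt}_X$-modules into $U\ten_{\R}\sA^{\bt}_X$ is $\cHom_{\R}(V,U)\ten_{\R}\sA^{\bt}_X$, whose global sections, with the differential induced by the two connections, are
\[
 \HHom_{\cP(X)}(V\ten_{\R}\sA^{\bt}_X, U\ten_{\R}\sA^{\bt}_X) = \Gamma\bigl(X, \cHom_{\R}(V,U)\ten_{\R}\sA^{\bt}_X\bigr) = A^{\bt}(X, U\ten V^{\vee}) = \HHom_{\cT}(V,U),
\]
the middle equality being the definition $A^{\bt}(X,-)=\Gamma(X, (-)\ten_{\R}\sA^{\bt}_X)$. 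Compatibility with composition is routine (cup product of forms versus composition of module maps), so $\Phi$ is a genuinely fully faithful dg functor, an isomorphism on all hom-complexes.

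\emph{$\cP(X,\cS)$ is Morita fibrant.} By definition $\cP(X,\cS)$ is closed under shifts, and it is closed under cones, since a cone sits in a triangle realising it as an extension of a shift of one object by another, and the shift-and-extension closure of the $\Phi(U)$ is closed under such extensions; hence $\H^0\cP(X,\cS)$ is a triangulated subcategory of $\H^0\cP(X)$. For idempotent-completeness, recall (from the remark after Definition \ref{derivedconns}) that $\cP(X)$ is quasi-equivalent to locally constant perfect hypersheaves, hence is idempotent-complete as a category of compact objects. Thus any idempotent on $M \in \cP(X,\cS)$ splits in $\cP(X)$ as $M \simeq M'\oplus M''$. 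Now I would apply the good-truncation filtration construction from the proof of Lemma \ref{PXlemma}: it is additive and canonical, so the finite $W$-filtrations of $M'$ and $M''$ are direct summands of that of $M$, and their associated-graded local systems are summands of the semisimple local systems occurring in the graded pieces of $M$, which lie in $\cS$. Since $\cS$ is a full rigid tensor subcategory, it is closed under direct summands of its (semisimple) objects, so these graded local systems lie in $\cS$; therefore $M',M''\in\cP(X,\cS)$. Hence $\cP(X,\cS)$ is an enhanced idempotent-complete triangulated category, i.e. Morita fibrant.

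\emph{Conclusion.} Since $\cT \to \per_{\dg}(\cT)$ is the Morita fibrant replacement of $\cT$, the fully faithful functor $\Phi$ into the Morita fibrant category $\cP(X,\cS)$ extends, up to quasi-equivalence, to a fully faithful dg functor $\bar{\Phi}\co \per_{\dg}(\cT) \to \cP(X,\cS)$ whose essential image is the thick (idempotent-complete triangulated) closure of $\Phi(\cT)$. By the previous paragraph that closure is all of $\cP(X,\cS)$, so $\bar{\Phi}$ is a quasi-equivalence. The main obstacle is the second paragraph, and specifically idempotent-completeness: one must verify that the summands produced abstractly in $\cP(X)$ genuinely lie in the shift-and-extension closure of $\{U\ten_{\R}\sA^{\bt}_X\}_{U\in\cS}$, which is exactly where the filtration of Lemma \ref{PXlemma} and the stability of $\cS$ under summands do the real work.
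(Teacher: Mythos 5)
Your proof follows the same route as the paper's: the same comparison functor $U\mapsto U\ten_{\R}\sA^{\bt}_X$, the same identification of hom-complexes showing full faithfulness, and the same conclusion via Morita fibrancy of $\cP(X,\cS)$ together with the fact that it is generated by the image of $\cT$. The only difference is that you spell out the hom-complex computation and supply an argument (via the filtration of Lemma \ref{PXlemma} and closure of $\cS$ under summands) for the closure of $\cP(X,\cS)$ under direct summands, which the paper simply asserts; this is a reasonable way to fill that detail.
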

\begin{proof}
 We define a dg functor $\iota\co \cT \to \cP(X,\cS)$ by sending $U$ to $U\ten_{\R}\sA^{\bt}_X$. The dg category $\cP(X,\cS)$  is closed under shifts, extensions and direct summands, so $\cP(X,\cS)$ is Morita fibrant --- in other words,  $\cP(X,\cS) \to \per_{\dg}(\cP(X,\cS))$ is a quasi-equivalence.

Now, the dg functor $\iota$ is clearly full and faithful, since the maps $\cT(U,V) \to \cP(X)(\iota U, \iota V)$ are isomorphisms. The definition of $\cP(X,\cS)$ ensures that it is generated by $\iota \cT$, so $\per_{\dg}(\cT) \to \per_{\dg}(\cP)$ must be a quasi-equivalence. 
\end{proof}

Note that Moriya's category $\hat{\cT}$ (see \S \ref{moriyasn}) embeds in $\cP(X)$ as the full dg subcategory on objects $\sV^{\bt}$ with $\bar{\sV}^{\bt}$ concentrated in degree $0$ --- these correspond to flabby resolutions of local systems.

\subsection{Betti homotopy types}

We now let $k$ be a field of characteristic $0$.

\begin{definition}
As in \cite[Definition \ref{htpy-zardense}]{htpy}, define the relative Malcev homotopy type $G(X,x)^{R,\mal}$ of a pointed connected topological space $(X,x)$ with respect to a Zariski-dense representation $\rho \co \pi_1(X,x) \to R(k)$
as follows. First form the reduced simplicial set $\Sing(X,x)$ of singular simplices based at $x$, then apply Kan's loop group functor from \cite{loopgp} to give a simplicial group $G(X,x):= G(\Sing(X,x))$. Note that $\pi_0G(X,x)= \pi_1(X,x)$, and apply the relative Malcev completion construction of \cite{hainrelative} levelwise to $G(X,x) \to R(k)$, obtaining a simplicial affine group scheme
\[
 G(X,x)^{R,\mal},
\]
with each $G(X,x)_n^{R, \mal}$ a pro-unipotent extension of $R$.

In other words, $G(X,x)_n \to (G(X,x)^{R, \mal})_n(k) \xra{f(k)} R(k)$ is the universal diagram with $f$ a pro-unipotent extension.
\end{definition}

To a relative Malcev homotopy type $G(X,x)^{R,\mal}$ are associated relative Malcev homotopy groups $\varpi_n(X^{R,\mal},x):= \pi_{n-1}G(X,x)^{R,\mal}$. These are affine group schemes, with $\varpi_1(X^{R,\mal},x) = \pi_1(X,x)^{R, \mal}$. The higher homotopy groups are pro-finite dimensional vector spaces, and are often just $\pi_n(X,x)\ten_{\Z}k$ --- see \cite[Theorem \ref{htpy-classicalpimal}]{htpy}, \cite[Theorem \ref{mhs2-fibrations}]{mhs2} and \cite[Theorem \ref{weiln-etpimal}]{weiln}.

\begin{examples}\label{nilpmalcev}
 When $\cS$ is the category of all semisimple local systems in $k$-vector spaces on $X$, we write $G(X,x)^{R, \mal}= G(X,x)^{\alg}$.  Note that \cite[Corollary \ref{htpy-eqtoen}]{htpy} shows that $G(X,x)^{\alg}$ is a model for To\"en's schematic homotopy types.

When $\cS$ is the category of constant local systems on $X$, note  that $R=1$ and that $G(X,x)^{1, \mal}$ is  the nilpotent $k$-homotopy type, so Quillen's rational  homotopy type from \cite{QRat} when $k=\Q$.
\end{examples}

We now specialise to the setting of the previous section, with $k=\R$.
\begin{proposition}\label{propforms}
 When $X$ is a manifold   and $\cT= \Rep(R,A^{\bt}(X, \bO(R))) $, the dg Hopf algebra $C\simeq (x^*)^{\vee}\ten^{\oL}_{\cT}x^*$ of Corollary \ref{hopfalgconc3} associated to the fibre functor $x^*\co \cT \to \FD\Vect$ is a model for the relative Malcev homotopy type $G(X,x)^{R,\mal}$ of $(X,x)$ under the equivalences of \cite[Theorem \ref{htpy-bigequiv}]{htpy}.
\end{proposition}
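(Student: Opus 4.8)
The plan is to identify $C$ as the tensor Koszul dual of the equivariant cochain algebra $A = A^{\bt}(X,\bO(R))$ and then to invoke the main comparison theorem \cite[Theorem \ref{htpy-bigequiv}]{htpy}, under which such equivariant dg algebras are precisely the models for relative Malcev homotopy types over $R$.

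First I would record that the hypotheses of Corollary \ref{hopfalgconc3} hold, as already noted in the discussion preceding the proposition: $\H^0\cT \simeq \cS$ is abelian semisimple, the de Rham complex is concentrated in non-negative degrees so $\H^{<0}\cT(U,V)=0$, and $x^*$ sends each $U$ to its fibre $U_x$, which is finite-dimensional, concentrated in degree $0$, and strong monoidal; connectedness of $X$ makes $x^*|_{\cS}$ faithful. Corollary \ref{hopfalgconc3} then produces a dg Hopf algebra $C \simeq (x^*)^{\vee}\ten^{\oL}_{\cT}x^*$ in $DG^{\ge 0}\Hopf_n\Alg_k$, together with the tensor quasi-equivalences of its statement.

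Next I would make $C$ explicit through the tensor Koszul duality of Proposition \ref{Koszulequiv2}. Since $\cT = \Rep(R, A)$, the construction of Proposition \ref{hopfalgconc1} presents $C$ as the image of $A$ under $\beta_{\ten}$; concretely, the tangent dg Lie coalgebra $\tan(C)$ is $\beta_{\ten}(\H^{>0}A)$. Equivalently, as recorded in Remark \ref{cfhtpybar}, one has $C = O(R \ltimes \bar{G}(A))$, with $\bar{G}$ the bar construction of \cite{htpy}. The point is that the chain of equivalences $DG^{>0}\Comm(R) \simeq DG^{\ge 0}\Co_n\Lie(R) \simeq DG^{\ge 0}\Hopf_n\Alg(R) \simeq dg\hat{\cN}(R)$ assembled in \S\ref{tensorsubsn} carries $A$ to the dg Hopf algebra $C$, all compatibly with the $R$-equivariant and monoidal structures. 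Then \cite[Theorem \ref{htpy-bigequiv}]{htpy} identifies $A^{\bt}(X,\bO(R))$ as the commutative $R$-equivariant dg algebra modelling $G(X,x)^{R,\mal}$, and combining this with the identification of $C$ as its Koszul dual gives the claim.

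The hard part will be the middle step: one must pin down that the two a priori distinct passages from $A$ to a homotopy type --- the Hochschild/bar construction $\beta_{\ten}$ of the present paper and the $\bar{G}$-construction underlying the definition of relative Malcev homotopy types in \cite{htpy} --- are identified by the Quillen equivalences of Proposition \ref{Koszulequiv2}, and not merely shown to yield abstractly equivalent homotopy types. Tracking the $R$-equivariant and monoidal data through these equivalences (so that the resulting object is genuinely $G(X,x)^{R,\mal}$ and not just something quasi-isomorphic to it as a bare homotopy type) is what makes this delicate; once the dictionary of \S\ref{tensorsubsn} is applied, the remaining verifications are formal.
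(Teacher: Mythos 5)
Your overall strategy matches the paper's: both proofs hinge on the identification $C = O(R \ltimes \bar{G}(A))$ for $A = A^{\bt}(X,\bO(R))$ via Remark \ref{cfhtpybar}, followed by a comparison with the homotopy type constructed in \cite{htpy}. The worry you flag as the ``hard part'' --- reconciling the bar construction $\beta_{\ten}$ of this paper with the $\bar{G}$-construction of \cite{htpy} --- is in fact already discharged by Remark \ref{cfhtpybar}, which you cite, so that is not where the difficulty lies.

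The genuine gap is in your final step. You write that \cite[Theorem \ref{htpy-bigequiv}]{htpy} ``identifies $A^{\bt}(X,\bO(R))$ as the commutative $R$-equivariant dg algebra modelling $G(X,x)^{R,\mal}$.'' It does not: that theorem is purely the abstract dictionary of Quillen equivalences between $R$-equivariant dg algebras, dg Lie coalgebras, Hopf algebras and simplicial group schemes, and says nothing about which object on the algebra side corresponds to the loop group $G(X,x)^{R,\mal}$, which is defined via singular simplices and Kan's loop group functor. Bridging the de Rham model $A^{\bt}(X,\bO(R))$ with this Betti-type construction is a substantive comparison theorem, namely \cite[Proposition \ref{htpy-propforms}]{htpy}, which asserts that $G(X,x)^{R,\mal}$ is quasi-isomorphic to $R \ltimes \bar{G}(DA)$. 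The paper's proof routes through exactly this: it applies the Dold--Kan denormalisation $D$ to convert the dg Hopf algebra $C = O(R\ltimes\bar{G}(A))$ into the cosimplicial Hopf algebra $O(R\ltimes\bar{G}(DA))$, so that it can be compared with the ring of functions on the simplicial group scheme, and then invokes that proposition. Without citing this de Rham comparison (or reproving it), your argument only shows that $C$ corresponds under the dictionary to \emph{some} homotopy type built from $A$, not that this homotopy type is $G(X,x)^{R,\mal}$.
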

\begin{proof}
We need to show that  the Dold--Kan denormalisation functor $D$ (\cite[Definition \ref{htpy-DKD}]{htpy}) from dg Hopf algebras to cosimplicial Hopf algebras sends $C$ to a model for the ring of functions on the simplicial group scheme $G(X,x)^{R,\mal}$. By Remark \ref{cfhtpybar},  the dg Hopf algebra $C$ is given by $O( R \ltimes \bar{G}(A))$, for $A= A^{\bt}(X, \bO(R))$. Applying $D$ then gives 
\[
 DC= O( R \ltimes \bar{G}(DA)),
\]
where $\bar{G}$ is now the functor on cosimplicial algebras defined in \cite[Definition \ref{htpy-qdef}]{htpy}. [Equivalently, this is a weak equivalence
\[
 B\Spec DC \simeq [(\Spec DA)/R]
\]
of affine stacks in the sense of \cite{chaff}, where $B$ is the nerve.]

By \cite[Proposition \ref{htpy-propforms}]{htpy}, the simplicial group scheme $G(X,x)^{R,\mal}$ is quasi-isomorphic to $R \ltimes \bar{G}(DA)$, so we have shown
\[
\Spec  DC \simeq G(X,x)^{R,\mal}.
\]
\end{proof}

\begin{remark}\label{singDGcat}
 For any reduced simplicial set $X$ and Zariski-dense representation $\rho\co \pi_1(X) \to R(k)$, there is a relative Malcev homotopy type $G(X)^{R,\mal}$. By \cite[Theorem \ref{htpy-eqhtpy}]{htpy}, this homotopy type corresponds (via \cite[Theorem \ref{htpy-bigequiv}]{htpy}) to the $R$-equivariant cosimplicial algebra
\[
 \CC^{\bt}(X, \rho^{-1}O(R))
\]
of equivariant singular cochains with coefficients in the local coefficient system $\rho^{-1}O(R) $ (with right multiplication). 

A model for the corresponding $R$-equivariant dg algebra is given by applying the Thom--Sullivan functor $\Th$. The corresponding dg tensor category $\cT$ has finite-dimensional $R$-representations as objects, and morphisms
\[
 \cT(U,V)= \Th \CC^{\bt}(X, \rho^{-1}(U\ten V^{\vee})).
\]
When $R= \pi_1(X)^{\red}$ is the reductive pro-algebraic fundamental group of $X$, the quasi-isomorphism between cosimplicial and cocubical cochains gives a quasi-isomorphism between $\hat{\cT}$  and the dg category $T_{\dR}(X)$ of \cite[Theorem 1.0.4]{moriya}. 
\end{remark}

\begin{corollary}\label{integratecor}
 The dg category $\cD_{\dg}^{\mathrm{co}}(O( G(X,x)^{R,\mal}))$ of Definition \ref{Dcodef} is quasi-equivalent to $\ind(\cP(X,\Rep(R)))$, for $\cP(X, \Rep(R))$  the dg category of derived connections from  Definition \ref{derivedconns}. 
Under this equivalence, $\cP(X, \Rep(R))$ corresponds to the full dg subcategory of $\cD_{\dg}(O( G(X,x)^{R,\mal}))$ on  fibrant replacements of finite-dimensional comodules.
 The equivalence  respects the tensor structures.
\end{corollary}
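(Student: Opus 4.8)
The plan is to derive this corollary purely by splicing together results already in place, since every hypothesis has been checked in the run-up to Proposition \ref{propforms}. First I would record that the pair $(\cT, x^*)$ satisfies the conditions of Corollary \ref{hopfalgconc3}: the category $\H^0\cT \simeq \cS$ is abelian semisimple, $\H^{<0}\cT(U,V)=0$, the functor $x^*$ is strong monoidal with $\H^* x^*(U) = U_x$ finite-dimensional and concentrated in degree $0$, and $\H^0 x^* \co \cS \to \FD\Vect_\R$ is faithful because $X$ is connected. Applying Corollary \ref{hopfalgconc3} then produces the dg Hopf algebra $C \simeq (x^*)^{\vee} \ten_{\cT}^{\oL} x^*$ in $DG^{\ge 0}\Hopf_n\Alg_\R$ together with a tensor functor $\cD_{\dg}(\cT) \to \C_{\dg}(C)$; because $\H^0 x^*$ is faithful, Propositions \ref{coalgconcequiv} and \ref{coalgconcper} upgrade this to a quasi-equivalence $\cD_{\dg}(\cT) \simeq \cD_{\dg}^{\mathrm{co}}(C)$ carrying $\per_{\dg}(\cT)$ onto the full dg subcategory $\cF_{\dg}(C)$ of $\cD_{\dg}(C)$ on fibrant replacements of finite-dimensional comodules, with the tensor structure preserved by Proposition \ref{monoidalprop}.

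Next I would invoke Proposition \ref{propforms}, which identifies this same $C$ with a model for $O(G(X,x)^{R,\mal})$, so that $\cD_{\dg}^{\mathrm{co}}(C) = \cD_{\dg}^{\mathrm{co}}(O(G(X,x)^{R,\mal}))$; should one worry that different models intervene, the underlying dg coalgebra lies in $DG^{\ge 0}\Co_n\Alg_\R$, where by Proposition \ref{Koszulequiv} quasi-isomorphisms are derived Morita equivalences, so the coderived category is well-defined. On the geometric side I would use Lemma \ref{integratelemma}'s quasi-equivalence $\per_{\dg}(\cT) \simeq \cP(X, \cS) = \cP(X, \Rep(R))$, then pass to ind-completions via the identity $\cD_{\dg}(\cT) = \ind(\per_{\dg}(\cT))$ to splice everything into
\[
 \ind(\cP(X, \Rep(R))) \simeq \cD_{\dg}(\cT) \simeq \cD_{\dg}^{\mathrm{co}}(O(G(X,x)^{R,\mal})).
\]
Tracking $\per_{\dg}(\cT)$ through the same chain then identifies $\cP(X, \Rep(R))$ with $\cF_{\dg}$, the full dg subcategory of $\cD_{\dg}(O(G(X,x)^{R,\mal}))$ on fibrant replacements of finite-dimensional comodules.

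There is no deep obstacle here; the conceptual content is entirely carried by Corollary \ref{hopfalgconc3}, Proposition \ref{propforms} and Lemma \ref{integratelemma}, and the only real care is bookkeeping of two kinds. First, tensor-compatibility must be propagated through every link: I would verify that the embedding $\iota \co U \mapsto U \ten_\R \sA^{\bt}_X$ of Lemma \ref{integratelemma} is monoidal, using $(U \ten_\R \sA^{\bt}_X) \ten_{\sA^{\bt}_X} (V \ten_\R \sA^{\bt}_X) \cong (U \ten_\R V) \ten_\R \sA^{\bt}_X$, so that the ind-completed equivalence is monoidal, and combine this with the monoidality of the functor from Corollary \ref{hopfalgconc3}. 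Second, I would confirm that finite-dimensional comodules match on the nose: these are the compact generators of the coderived category by \cite[5.5]{positselskiDerivedCategories}, so they are preserved by the derived Morita equivalence relating the various models of $O(G(X,x)^{R,\mal})$, giving the claimed correspondence of $\cF_{\dg}$. Each of these is routine but is where one could slip.
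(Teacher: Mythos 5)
Your proposal is correct and follows essentially the same route as the paper: both combine the comodule/Koszul-duality equivalences (Corollary \ref{hopfalgconc3} via Propositions \ref{coalgconcequiv} and \ref{coalgconcper}, which the paper packages through Remark \ref{cfhtpybar}) with Proposition \ref{propforms} and Lemma \ref{integratelemma}, and both appeal to Proposition \ref{coalgconcper} to identify $\cP(X,\Rep(R))$ with the fibrant replacements of finite-dimensional comodules. Your extra checks on model-independence of the coderived category and on monoidality of $\iota$ are sensible but do not change the argument.
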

\begin{proof}
Remark \ref{cfhtpybar} gives a  quasi-equivalence
\[
 \cD_{\dg}(\Rep(R,A)) \simeq \cD_{\dg}^{\mathrm{co}}(O( R \ltimes \bar{G}(A))),
\]
which is compatible with tensor structures by Corollary \ref{hopfalgconc3}.
Proposition \ref{propforms} gives $\cD_{\dg}^{\mathrm{co}}(O( R \ltimes \bar{G}(A)))\simeq \cD_{\dg}^{\mathrm{co}}(O( G(X,x)^{R,\mal}))$, while Lemma \ref{integratelemma} gives $\per_{\dg}(\Rep(R,A))\simeq \cP(X,\Rep(R))$ and hence $ \cD_{\dg}(\Rep(R,A)) \simeq \ind(\cP(X,\Rep(R)))$. Combining these gives the tensor quasi-equivalence $\cD_{\dg}^{\mathrm{co}}(O( G(X,x)^{R,\mal})) \simeq \ind(\cP(X,\Rep(R))) $.

For the characterisation of $\cP(X,\Rep(R))$, we appeal to Proposition \ref{coalgconcper}.
\end{proof}

\begin{remark}\label{generalbaseBetti}
 As in \cite[Remark \ref{HHtannaka1-generalbase}]{HHtannaka1}, we can also consider  a finite set $T$ of basepoints. Proposition \ref{propforms} then adapts to show that the dg Hopf algebroid $C_T$ given by  $C_T(x,y)\simeq (x^*)^{\vee}\ten^{\oL}_{\cT}y^*$  is a for the unpointed relative Malcev homotopy type $G(X;T)^{R,\mal}$ of $X$, where $G(X;T)$ is the restriction of Dwyer and Kan's loop groupoid $G(X)$ (from \cite{pathgpd}) to the set $T$ of objects. 

Because these dg Hopf algebroids are all equivalent as $T$ varies (or equivalently, because the fibre functors are all quasi-isomorphic), taking the colimit over all finite $T$ gives a model for $G(X)^{R,\mal}$.

Points of $X$  also give a set $\{\bar{x}^*\}$ of fibre functors  on the category of all $k$-linear sheaves, not just on locally constant sheaves. Any such set $T$ of points  yields a dg bialgebroid $C'_T$, but the dg derived category of $C'_T$-comodules is then just monoidally quasi-equivalent to the dg derived category of $k$-linear sheaves supported on $T$. Because the  site has enough points, the set of all points gives a jointly faithful set of fibre functors on the category of $k$-linear sheaves. However, \cite[Remark \ref{HHtannaka1-generalbase}]{HHtannaka1} only applies to finite sets of fibre functors, so  only  finitely supported $k$-linear sheaves arise as comodules of the associated dg bialgebroid $C'= \LLim_T C'_T$. 
\end{remark}

\subsection{The universal Hopf algebra}\label{univhalg}

An unfortunate feature of relative Malcev homotopy types is that they rely on a choice of basepoint(s). However, the constructions of
\S \ref{univbialg} 
give us a universal bialgebra construction $D(X,\cS)$ associated to a topological space $X$ and a tensor category $\cS$ of semisimple local systems. This should be regarded as the ring of functions on the  space of  algebraic paths generated by $\cS$, while $G(X,x)^{R, \mal}$ is the loop group at a fixed basepoint.

In order to understand $D(X,\cS)$, we must first understand the category $\cD_{\dg}(\cT\ten \cT^{\op}) $ in which it lives. When $X$ is a manifold, recall that $\cT= \Rep(R,A^{\bt}(X, \bO(R)))$ and  $\Rep(R)\simeq\cS= \H^0\cT$. By Lemma \ref{integratelemma}, we have a  quasi-equivalence
\begin{align*}
 \per_{\dg}(\cT\ten \cT^{\op}) &\simeq \cP(X^2, \Rep(R^2)),\\
(U, V) &\mapsto \iota_{X^2}((\pr_1^{-1}U)\ten_k (\pr_2^{-1}V^{\vee})),
\end{align*}
and hence
\[
 \cD_{\dg}(\cT\ten \cT^{\op}) \simeq \ind(\cP(X^2, \Rep(R^2))).
\]

Understanding $\C_{\dg}(\cT\ten \cT^{\op}) $ is harder, but observe that there is a dg functor $r$ from the dg category of  $\sA^{\bt}_{X^2}$-modules to $\C_{\dg}(\cT\ten \cT^{\op}) $, given by
\[
 (rM)(U,V)= \HHom_{\sA^{\bt}_{X^2}}(\iota_{X^2}((\pr_1^{-1}U^{\vee})\ten_k (\pr_2^{-1}V)) ,M).
\]

Now, note that for $K,L \in \cP(X)$, we have
\[
 \HHom_{\cP(X) }(K,L)\cong \HHom_{\sA^{\bt}_{X^2}}((\pr_1^*K)\ten_{\sA^{\bt}_{X^2}} (\pr_2^*L^{\vee}),  \Delta_*\iota_X(k)),
\]
where $\Delta \co X \to X \by X$ is the diagonal morphism. Thus
\[
 \id_{\cA}=r \Delta_*\iota_X(k) \in \C_{\dg}(\cT\ten \cT^{\op}).
\]
Likewise,
\begin{eqnarray*}
&& \HHom_{\sA^{\bt}_{X^2}}(\iota_{X^2}((\pr_1^{-1}U)\ten_k (\pr_2^{-1}V^{\vee})), M\ten_{\cT}N)\\
&=& M(U,-)\ten_{\cT}N(-,V)\\
&\cong&\HHom_{\sA^{\bt}_{X^3}}(\iota_{X^2}((\pr_1^{-1}U)\ten_k (\pr_3^{-1}V^{\vee})), (\pr_{12}^*M)\ten_{\sA^{\bt}_{X^3}}(\pr_{23}^*N  ))\\
&\cong &  \HHom_{\sA^{\bt}_{X^2}}(\iota_{X^2}((\pr_1^{-1}U)\ten_k (\pr_2^{-1}V^{\vee})), \pr_{12*}((\pr_{12}^*M)\ten_{\sA^{\bt}_{X\by X\by X}}(\pr_{23}^*N  ) ),
\end{eqnarray*}
so we have
\[
 M\ten_{\cT}N = r \pr_{13*}((\pr_{12}^*M)\ten_{\sA^{\bt}_{X^3}}(\pr_{23}^*N  )     ).
\]

Combining these results gives:
\begin{lemma}\label{pathspacelemma}
 A universal bialgebra $D(X,\cS)$ corresponds under the equivalence $\cD_{\dg}(\cT\ten \cT^{\op}) \simeq \ind(\cP(X^2, \Rep(R^2)))$ above to a sheaf $\bD \in \ind(\cP(X^2, \Rep(R^2)))$ equipped with a commutative unital multiplication 
\[
 \bD\ten_k\bD \to \bD
\]
and a coassociative $\sA^{\bt}_{X^2}$-linear  comultiplication
\[
 \bD \pr_{13*}((\pr_{12}^*\bD)\ten_{\sA^{\bt}_{X^3}}(\pr_{23}^*\bD  )     )
\]
with $\sA^{\bt}_{X^2}$-linear counit 
\[
 \bD\to \Delta_*\iota_X(k).
\]
\end{lemma}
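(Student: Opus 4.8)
The plan is to read off each piece of the universal-bialgebra structure on $D \in \cD_{\dg}(\cT^{\op}\ten\cT)$ by transporting it across the quasi-equivalence $\cD_{\dg}(\cT\ten\cT^{\op}) \simeq \ind(\cP(X^2,\Rep(R^2)))$ of Lemma \ref{integratelemma}, using the three ``dictionary'' identities established just before the statement: the description of the unit
\[
 \id_{\cA}=r\,\Delta_*\iota_X(k),
\]
the formula for the $\ten_{\cT}$-product
\[
 M\ten_{\cT}N = r\,\pr_{13*}\bigl((\pr_{12}^*M)\ten_{\sA^{\bt}_{X^3}}(\pr_{23}^*N)\bigr),
\]
and the identification of the monoidal product $\boxtimes^2$ on $\cT\ten\cT^{\op}$ with $\ten_{\sA^{\bt}_{X^2}}$ on $\cP(X^2,\Rep(R^2))$ (which follows from $\iota_{X^2}(A)\ten_{\sA^{\bt}_{X^2}}\iota_{X^2}(B)\cong\iota_{X^2}(A\ten_{\R}B)$). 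Writing $\bD$ for the image of $D$, each of the four structure maps of a universal bialgebra (comultiplication, counit, multiplication, unit) then becomes a map of sheaves on the appropriate power of $X$, and the axioms transport because the equivalence is one of symmetric monoidal dg categories.

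First I would treat the coalgebra structure. Applying the equivalence to the counit $D \to \id_{\cT}$ and using $\id_{\cT}=r\Delta_*\iota_X(k)$ (with $r$ serving as the quasi-inverse on the objects in play) yields the $\sA^{\bt}_{X^2}$-linear map $\bD\to\Delta_*\iota_X(k)$. Likewise the comultiplication $D\to D\ten_{\cT}D$ is rewritten, via the $\ten_{\cT}$-formula, as the $\sA^{\bt}_{X^2}$-linear map
\[
 \bD\to \pr_{13*}\bigl((\pr_{12}^*\bD)\ten_{\sA^{\bt}_{X^3}}(\pr_{23}^*\bD)\bigr).
\]
Next I would treat the algebra structure. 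By Remark \ref{boxadjoint} the multiplication may be rephrased as $D\boxtimes^2D\to D$, and under $\boxtimes^2\leftrightarrow\ten_{\sA^{\bt}_{X^2}}$ this transports to the stated commutative unital multiplication on $\bD$, commutativity being inherited from the symmetry of $\boxtimes$ on $\cS=\Rep(R)$ and the unit coming from $({\mathbbm{1}}\ten{\mathbbm{1}})\ten_{\cT}\omega$-type normalisations. Finally, compatibility of the algebra and coalgebra structures (that $\Delta$ and the counit are algebra morphisms) carries over directly, since all the identifications above are natural with respect to the symmetric monoidal structures.

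The main obstacle is checking coherence beyond the single instances computed before the statement: the identities for $\ten_{\cT}$ and $\id_{\cT}$ were verified as isomorphisms of sheaves for fixed test pairs $(U,V)$, whereas coassociativity of $\Delta$ compares the two ways of iterating the formula and so requires the corresponding identity on $X^4$, namely that $\pr_{13*}$, the pullbacks $\pr_{ij}^*$, and $\ten_{\sA^{\bt}}$ commute as needed when the two parenthesisations of $D\ten_{\cT}D\ten_{\cT}D$ are matched up. This reduces to flat base change and the projection formula along the Cartesian squares relating the projections $X^4\to X^3\to X^2$, which hold because $\sA^{\bt}_X$ is a flat (indeed flabby) resolution of the constant sheaf and the projections are smooth. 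Once these base-change coherences are in place, counitality, associativity, commutativity, and the bialgebra compatibility are all formal consequences of transporting structure across the monoidal quasi-equivalence, and the lemma follows.
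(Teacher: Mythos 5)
Your proposal is correct and follows exactly the route the paper intends: the paper offers no separate proof, simply stating ``Combining these results gives'' after establishing the identities $\id_{\cA}=r\,\Delta_*\iota_X(k)$ and $M\ten_{\cT}N = r\,\pr_{13*}((\pr_{12}^*M)\ten_{\sA^{\bt}_{X^3}}(\pr_{23}^*N))$, and your argument is precisely the transport of the four structure maps across the quasi-equivalence of Lemma \ref{integratelemma} using that dictionary. Your additional remarks on base-change coherence over $X^4$ for coassociativity are a harmless (and reasonable) elaboration of what the paper leaves implicit.
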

Beware that although the co-unit $\bD \to r\Delta_*\iota_X(k)$ is  a quasi-isomorphism of sheaves, the induced map $ \bD \to \Delta_*\iota_X(k)$ is far from being so, with the object on the left locally constant  and that on the right supported on the diagonal. In some sense, $\bD$ is the universal  coalgebra under $\Delta_*k$ generated by $\cS\ten\cS^{\op}$. In the same way that a path space in topology is a fibrant replacement for the diagonal, $\bD$ is a cofibrant replacement for functions on the diagonal, which is why we think of it as functions on the  space of  algebraic paths generated by $\cS$.

\begin{example}\label{concHHbialgex}
Note that the construction of Propositions \ref{coalgconc1} and \ref{hopfalgconc1} gives an efficient choice
$
\iota_{X^2} N\CCC(\cT/\cS, i^{\op}\ten i) 
$
for the dg bialgebra $\bD$, in which case it becomes a dg Hopf algebra. Explicitly, we have
\[
 \iota_{X^2} N\CCC(\cT/\cS, i^{\op}\ten i)=N\CCC(\cT/\cS, (\pr_1^{-1}\iota_Xi^{\op})\ten (\pr_2^{-1}\iota_Xi)) \ten_{[(\pr_1^{-1}\sA^{\bt}_X)\ten (\pr_2^{-1}\sA^{\bt}_X)]} \sA^{\bt}_{X^2}, 
\]
where
\begin{align*}
& \uline{\CCC}_n(\cT/\cS, (\pr_1^{-1}\iota_Xi^{\op})\ten (\pr_2^{-1}\iota_Xi))=\\
 &(\pr_1^{-1}\iota_X\bO(R))\ten^R \underbrace{A^{\bt}(X, \bO(R))\ten^R \ldots \ten^R A^{\bt}(X, \bO(R))}_n\ten^R(\pr_2^{-1}\iota_X\bO(R)).
\end{align*}
 Thus $\uline{\CCC}_0(\cT/\cS, i^{\op}\ten i)=\iota_{X^2}(\bO(R)\ten^R\bO(R))$, which is quasi-isomorphic to the local system given by the $\pi_1(X)^2$-representation $O(R)$, with the two copies of $\pi_1(X)$ acting as left and right multiplication.
\end{example}

\begin{example}\label{irredHHrmk2}
Following Remark \ref{irredHHrmk}, for $i \co \cS \to \cT$ we may describe $N\CCC(\cT/\cS, i^{\op}\ten i)$ in terms of irreducibles.  Let $\{V_{\alpha}\}_{\alpha}$ be a set of irreducible objects of complex $R$-representations, with one in each isomorphism class. Complex conjugation $\Gal(\Cx/\R)$ acts on this set, and  then we have
\begin{align*}
& \uline{\CCC}_n(\cT/\cS, (\pr_1^{-1}\iota_Xi^{\op})\ten (\pr_2^{-1}\iota_Xi))\ten_{\R}\Cx\cong \\
&\bigoplus_{\alpha_0, \ldots, \alpha_n}  (\pr_1^{-1}\sA_X^{\bt}(V_{\alpha_0}^{\vee}))\ten_{\Cx}A^{\bt}(X,V_{\alpha_0}\ten_{\Cx}V_{\alpha_1}^{\vee})\ten_{\Cx}\ldots \ten_{\Cx}A^{\bt}(X,V_{\alpha_{n-1}}\ten_{\Cx}V_{\alpha_n}^{\vee})\ten_{\Cx}(\pr_2^{-1}\sA_X^{\bt}(V_{\alpha_n})),
\end{align*}
with $\uline{\CCC}_n(\cT/\cS, (\pr_1^{-1}\iota_Xi^{\op})\ten (\pr_2^{-1}\iota_Xi))$ given by taking $\Gal(\Cx/\R)$-invariants.

When $\cS$ is the category of constant local systems, corresponding to the real (nilpotent) homotopy type as in Examples \ref{nilpmalcev},  this simplifies to
\begin{align*}
 & \iota_{X^2} \uline{\CCC}_n(\cT/\cS,i^{\op}\ten i )\\
&\cong[\pr_1^{-1} \sA_X^{\bt}\ten_{\R}(A^{\bt}(X,\R))^{\ten n}\ten_{\R}\pr_2^{-1}\sA_X^{\bt}]\ten_{[(\pr_1^{-1}\sA^{\bt}_X)\ten (\pr_2^{-1}\sA^{\bt}_X)]} \sA^{\bt}_{X^2}\\
&=(A^{\bt}(X,\R))^{\ten n}\ten_{\R}\sA^{\bt}_{X^2}.
\end{align*}
In other words,
\[
 \iota_{X^2} \uline{\CCC}_{\bt}(\cT/\cS,i^{\op}\ten i )= \uline{\CCC}_{\bt}(A^{\bt}(X,\R), \sA^{\bt}_{X^2}).
\]
\end{example}

\begin{remark}\label{symmetry1rk}
Consider the case of a group $G$ acting on a manifold  $X$, with $\cS$ a $G$-equivariant rigid tensor subcategory of semisimple local systems (so $g^*U \in \cS$ whenever $U \in \cS$ and $g \in G$). Then we have an action of $G$ on $\bD$ over $X \by X$, with respect to the diagonal action of $G$ on $X \by X$. This is because $G$-equivariance of $\cS$ gives an action of $G$ on $\bO(R)$ over $X$ (i.e. compatible isomorphisms $\bO(R) \cong g^*\bO(R)$ for all $g \in G$), and hence an action on $A^{\bt}(X, \bO(R))$. For well-behaved $G$-actions, this allows us to regard $\bD$ as a sheaf of dg algebras on the quotient $(X \by X)/G$.
When $x \in X$ is a fixed point for the $G$-action, note that the dg Hopf algebra $C=(x,x)^*\bD$ inherits a $G$-action from $\bD$.  

Of course, in order to define a $G$-action on $\bD$, it suffices to have $G$-actions on $\cS$ and on the relative Malcev homotopy type $A^{\bt}(X, \bO(R))$. 
When $X$ is a compact K\"ahler manifold,  \cite[Theorem \ref{mhs2-mhspin}]{mhs2} show that  the Tannakian fundamental group  $\Pi(\MTS)$ of the category of mixed twistor structures acts algebraically on $A^{\bt}(X, \bO(R))$ for all $R$, with trivial action on $\cS$ (and hence  $\bO(R)$). This gives an algebraic action of $\Pi(\MTS)$ on $\bD$, so would allow us to regard  $\bD$ as an object of the derived category of mixed twistor modules, compatibly with the Hopf algebra structure. When the local systems in $\cS$ all underlie variations of Hodge structure,  there is also an algebraic circle action on $\cS$ and on $A^{\bt}(X, \bO(R))$, combining with the $\Pi(\MTS)$-action to give an action of the Tannakian fundamental group  $\Pi(\MHS)$ of the category of mixed Hodge  structures. Then $\bD$ would lie in  the derived category of mixed Hodge modules.
\end{remark}

\begin{remark}\label{pinlocsys}
Note that  the cohomology sheaf $\sH^0\bD$ is the local system $\bO(\varpi_1X^{R, \mal})$ on $X \by X$ defined in \cite[Corollary \ref{mhs2-kvmhspin}]{mhs2} as corresponding to  $O(\varpi_1(X^{R, \mal}, x))$ with its left and right actions by   $\pi_1(X, x)$. All of the cohomology sheaves of $\bD$ are necessarily local systems. 

The pullback $\Delta^*\bD$ to the diagonal is a sheaf of Hopf algebras   (the ring of functions on the  space of  algebraic loops generated by $\cS$). Then 
the higher cohomology sheaves of the sheaf of primitive elements of $\Delta^*\bD $ are dual to the local systems $\Pi^n(X^{\rho, \mal})$ of \cite[Corollary \ref{mhs2-kvmhspin}]{mhs2} corresponding to the relative Malcev homotopy groups
$\varpi_n(X^{R, \mal}, x)$ with their adjoint actions by $\pi_1(X, x)$.

When $f\co X \to Y$ is a fibration with section $p$, choose $R$ so  that $\Rep(R)$ contains the semisimplifications of the local systems $\oR^n f_*\R$ for all $n$. Then observe that we have a decomposition
\[
p^*\Delta^* \varpi_1X^{R, \mal} \cong \pi_1^{\dR}(X/Y,p) \rtimes \Delta^* \varpi_1Y^{R, \mal},
\]
where $\pi_1^{\dR}(X/Y,p)$ is Lazda's relative fundamental group from \cite{lazdaRelativeFundamental}.
\end{remark}

\subsection{$\Ql$-homotopy types} \label{qlhtpy}

Take a connected algebraic space $X$ and choose a full rigid tensor subcategory $\cS$ of the category of  semisimple  lisse $\Ql$-sheaves on $X$. Let $\cT$ be the cosimplicial tensor category with the same objects as $\cS$, but with morphisms
\[
 \cT(U,V)= \CC^{\bt}(X, U\ten V^{\vee}),
\]
where $\CC^{\bt}(X,-)$ is the $\ell$-adic Godement resolution of \cite[Definition 2.3]{paper1}. Note that $\H^0\cT \simeq \cS$, and that
\[
 \H^i\cT(U,V)\cong \H^i_{\et}(X, U\ten V^{\vee}),
\]
the $\ell$-adic \'etale cohomology groups.
As in Remark \ref{singDGcat}, we may apply the Thom--Sullivan functor $\Th$ to obtain a dg category $\Th(\cT)$.

Now, any geometric point $\bar{x}$ defines a fibre functor $\bar{x}^* \co \cT^0 \to \FD\Vect_{\Ql}$ sending $U$ to $U_{\bar{x}}$. As in \S \ref{derhamsn}, we may then construct an affine group scheme $R:=\Spec (\bar{x}^*)^{\vee}\ten_{\cS}\bar{x}^*$ over $\Ql$, with  an equivalence $\bar{x}^*\co \cS \to \Rep(R)$ of tensor categories. Equivalently, we have a Zariski-dense continuous group homomorphism $\rho \co \pi_1^{\et}(X,\bar{x}) \to R(\Ql)$. The $R$-equivariant dg algebra $A$ from \S \ref{tensorsubsn} is then just the dg algebra
\[
 \Th \CC^{\bt}(X, \bO(R))
\]
  of equivariant cochains from \cite[Definition \ref{weiln-CCdef}]{weiln}, so $\cT$ is equivalent to $\Rep(R,  \CC^{\bt}(X, \bO(R)) )$.  

\begin{proposition}\label{eqhtpy}
 The dg Hopf algebra $C\simeq (\bar{x}^*)^{\vee}\ten^{\oL}_{\Th(\cT)}\bar{x}^*$ of Corollary \ref{hopfalgconc3} associated to the fibre functor $\bar{x}^*\co \cT \to \FD\Vect$ is a model for the relative Malcev homotopy type $G(X,\bar{x})^{R,\mal}$ of $(X,\bar{x})$ under the equivalences of \cite[Theorem \ref{htpy-bigequiv}]{htpy}.
\end{proposition}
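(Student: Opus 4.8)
The plan is to follow the proof of Proposition \ref{propforms} essentially verbatim, substituting the $\ell$-adic cochain input for the de Rham complex; the only genuinely new ingredient is the comparison of the resulting dg algebra with the relative Malcev homotopy type. First I would check that $\Th(\cT)$ meets the hypotheses of Corollary \ref{hopfalgconc3}. It is a rigid tensor dg category over $\Ql$ (a field of characteristic $0$), with duals inherited from $\cS$; it is concentrated in non-negative degrees, since $\H^i\cT(U,V)\cong \H^i_{\et}(X,U\ten V^{\vee})$ vanishes for $i<0$; and $\H^0\Th(\cT)\simeq \cS\simeq \Rep(R)$ is abelian semisimple because $R$ is pro-reductive. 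The fibre functor $\bar{x}^*$ takes finite-dimensional values concentrated in degree $0$ and is faithful because $X$ is connected, so it is lax monoidal and quasi-strong in the sense required. The one structural point to keep in mind here, absent from the de Rham case, is that $\cT$ is a priori only cosimplicial, and it is the Thom--Sullivan normalisation $\Th$ that turns it into a genuine dg category, with $\Th(\cT)\simeq \Rep(R,\Th\CC^{\bt}(X,\bO(R)))$.

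With the hypotheses in place, Corollary \ref{hopfalgconc3} produces the dg Hopf algebra $C\simeq (\bar{x}^*)^{\vee}\ten^{\oL}_{\Th(\cT)}\bar{x}^*$, and Remark \ref{cfhtpybar} identifies it as $O(R\ltimes \bar{G}(A))$ for the equivariant cochain algebra $A=\Th\CC^{\bt}(X,\bO(R))$ of \cite[Definition \ref{weiln-CCdef}]{weiln}. Next I would apply the Dold--Kan denormalisation $D$ of \cite[Definition \ref{htpy-DKD}]{htpy}; exactly as in Proposition \ref{propforms}, $D$ is compatible with the formation of $R\ltimes\bar{G}(-)$, so that $DC=O(R\ltimes \bar{G}(DA))$, with $\bar{G}$ now the cosimplicial functor of \cite[Definition \ref{htpy-qdef}]{htpy}.

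Finally I would invoke the $\ell$-adic analogue of \cite[Proposition \ref{htpy-propforms}]{htpy}, established in \cite{weiln}, which asserts that the simplicial group scheme $G(X,\bar{x})^{R,\mal}$ is quasi-isomorphic to $R\ltimes \bar{G}(DA)$ for this choice of $A$. This gives $\Spec DC\simeq G(X,\bar{x})^{R,\mal}$, which is the assertion under the equivalences of \cite[Theorem \ref{htpy-bigequiv}]{htpy}. Since the formal manipulation is identical to the Betti case, the hard part is entirely contained in this last comparison: one must know that the Thom--Sullivan normalisation of the equivariant Godement cochains $\CC^{\bt}(X,\bO(R))$ correctly computes the relative Malcev homotopy type for arbitrary pro-reductive $R$ and connected algebraic space $X$, and that the tensor and fibre-functor structures survive the passage through $\Th$ so that Corollary \ref{hopfalgconc3} genuinely applies.
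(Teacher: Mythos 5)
Your proposal matches the paper's argument: the paper's proof is exactly "the proof of Proposition \ref{propforms} carries over, replacing \cite[Proposition \ref{htpy-propforms}]{htpy} with \cite[Theorem \ref{weiln-qleqhtpy}]{weiln}," which is the substitution you identify as the one genuinely new ingredient. Your additional verification of the hypotheses of Corollary \ref{hopfalgconc3} for $\Th(\cT)$ is a sensible (and correct) expansion of what the paper leaves implicit.
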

\begin{proof}
 The proof of Proposition \ref{propforms} carries over, replacing \cite[Proposition \ref{htpy-propforms}]{htpy} with \cite[Theorem \ref{weiln-qleqhtpy}]{weiln}.
\end{proof}

Now, we may regard $\per_{\dg}(\cT)$ as the dg subcategory of generated by $\cS$ in the dg category of $\sC^{\bt}_X(\Ql)$-modules in complexes of $\Ql$-sheaves, where $\sC^{\bt}_X$ is the sheaf version of the Godement resolution. Since $\sC^{\bt}_X(\Ql)$ is a flabby resolution of $\Ql$, this means that $\cD(\cT)$ is the derived category of $\Ql$-hypersheaves generated by $\cS$ under extensions, shifts and direct sums.

If $X$ is defined over a separably closed field $\bar{F}$, then $(X\by_{\bar{F}}X)_{\et} \cong X_{\et}\by X_{\et}$, which means that a universal bialgebra $D$ for $(X,\cS)$ corresponds to a bialgebra $\bD$ in the category of  $\Ql$-hypersheaves on $X \by_{\bar{F}} X$.

\begin{remark}\label{symmetry2rk}
Although we are working with \'etale homotopy types rather than Betti homotopy types, the argument of Remark \ref{symmetry1rk} carries over to say that symmetries of $X$ transfer to the universal bialgebra. In particular, this applies to Galois actions.

Explicitly, take an an algebraic space $X_0$ over a field $F$ with separable closure $\bar{F}$, and set $X= X_0\ten_F\bar{F}$. Assume that $\cS$ is generated by pullbacks of lisse sheaves on $X_0$ --- this is equivalent to saying that $\cS$ is $\Gal(F)$-equivariant with finite orbits. Then \cite[Theorem \ref{weiln-lfibrations}]{weiln} ensures that the relative Malcev homotopy type $G(X)^{R, \mal}$ carries a continuous Galois action, so we may regard the universal bialgebra $\bD$  as a Galois-equivariant  $\Ql$-hypersheaf on $X \by_{\bar{F}} X$, or equivalently as a $\Ql$-hypersheaf on $X_0 \by_{F} X_0$. For any basepoint $x \in X_0(F')$, this gives an action of $\Gal(F')$ on the dg bialgebra $(\bar{x}, \bar{x})^*\bD$, but the $\Gal(F)$-action on the universal bialgebra $\bD$ does not require $X(F)$ to be non-empty.
\end{remark}

\begin{remark}\label{generalbaseQl}
As in \cite[Remark \ref{HHtannaka1-generalbase}]{HHtannaka1} and Remark \ref{generalbaseBetti}, we can consider multiple basepoints instead, and taking  the set of all geometric points gives a dg Hopf algebroid $C$ with $C(\bar{x}, \bar{y}) \simeq (\bar{x}^*)^{\vee}\ten^{\oL}_{\Th(\cT)}\bar{y}^*$ as a model for the relative Malcev homotopy type $G(X_{\et})^{R,\mal}$.  
\end{remark}

\subsection{Motivic homotopy types}\label{motsn}

There is nothing special about the de Rham, Betti and $\ell$-adic cohomology theories considered so far in this section. Each construction of pro-algebraic homotopy types has only relied on a suitable sheaf of dg algebras, and a category of projective  modules over it. There are thus analogues for any mixed Weil cohomology theory in the sense of \cite{cisinskidegliseMixedWeil}, or if we are willing to replace Hopf algebras with coalgebras, for any stable cohomology theory. 

\subsubsection{Nilpotent homotopy types}\label{nilpmot}

We now look at the simplest relative Malcev homotopy types, when $R=1$, as in Examples \ref{nilpmalcev}. A mixed Weil cohomology theory $E$ has an associated sheaf $E_X$ of commutative dg algebras on each scheme $X$ over our base field $F$, and we write $E(X):= \Gamma(X, E_X)$. Set $\cS= \FD\Vect_k$ and   $\cT= E(X)\ten \cS$, so $\cT$ has the same objects as $\cS$, but $\cT(U,V)= E(X)\ten \cS(U,V)$.
We may then embed $\cD_{\dg}(\cT^{\op}\ten \cT)= \cD_{\dg}(E(X)^{\op}\ten E(X))$  into the category of $E_{X^2}$-modules by setting 
\[
 \iota_{X^2}(U,V)=U\ten_k E_{X^2}\ten_k V^{\vee}, 
\]
with the left and right actions of $E(X)$ on $E_{X^2}$ coming from the projections $X^2 \to X$.

As in \S \ref{univhalg}, we may now construct a universal Hopf algebra $\bD$ on $X^2=X \by X$, and regard it as the ring of functions on the  space of nilpotent algebraic paths. For an explicit model, we follow Examples \ref{HHcoalgex} and \ref{irredHHrmk2}, setting   
\[
 \bD:= \iota_{X^2} \CCC(\cT/\cS,i^{\op}\ten i )= \CCC(E(X), E_{X^2}),
\]
for $i \co \cS \to \cT$. This is the Hochschild homology complex of the DGA $E(X)$ with coefficients in the $E(X)$-bimodule $E_{X^2}$ in sheaves on $X^2$. 

As before, we have
\[
\bD=  \CCC(E(X), (\pr_1^{-1} E_X) \ten_k (\pr_2^{-1}E_X))\ten_{[(\pr_1^{-1} E_X) \ten_k (\pr_2^{-1}E_X)]} E_{X^2},
\]
which is  defined in terms of the  $(\pr_1^{-1} E_X) \ten_k (\pr_2^{-1}E_X)$-modules 
\[
\uline{\CCC}_n(\cT/\cS,(\pr_1^{-1}\iota_X i^{\op})\ten (\pr_2^{-1}\iota_Xi))\cong\pr_1^{-1} E_X\ten_kE(X)^{\ten n}\ten_k\pr_2^{-1}E_X.
\]
on $X^2$. Since $E$ is a mixed Weil cohomology theory,  this is quasi-isomorphic to
\[
 \pr_1^{-1} E_X\ten_kE(X^n)\ten_k\pr_2^{-1}E_X.
\]

Writing $h(X/Y)$ for the cohomological motive  $M_k(X/Y)^{\op} \in \cM_{\bA^1}(Y)^{\op}$ of $X$ over a base scheme $Y$ and $h(X):= h(X/F)$, we see that the sheaf $\bD$ comes from applying $E$ to the simplicial  motive
\[
n \mapsto  \pr_1^{-1}h(X/X) \ten_k h(X^n)\ten_k\pr_2^{-1}h(X/X)
\]
on $X^2$.

A choice of basepoint $a \co \Spec F \to X$ gives a fibre functor $a^* \co E(X) \to k$, and hence $\cT \to \FD\Vect_k$. The associated dg Hopf algebra is
\[
 C:= (a,a)^*\bD\cong E(X^{\bt}),
\]
with the outer boundary  maps  coming from pulling back by $(a, \id)^*, (\id,a)^*\co X^n \to X^{n+1}$.

Thus $C$ comes from applying the cohomology theory $E$ to the simplicial  motive
\[
\xymatrix@1{ h(F)  \ar@{.>}[r] & \ar@<1ex>[l]^{a^*}  \ar@<-1ex>[l]_{a^*}   h(X) \ar@<1ex>@{.>}[r] \ar@<-1ex>@{.>}[r]& \ar@<2ex>[l]^-{(a, \id)^*} \ar[l]|-{\Delta^*}  \ar@<-2ex>[l]_-{(\id,a)^*}   h(X\by X) \ldots },
\]
which is just $h(\cP_a(X))$, for Wojtkowiak's cosimplicial loop space $\cP_a(X)$ from \cite{wojtkowiakCosimplicial}. The motivic fundamental group of \cite{esnaultlevine} is then essentially just 
\[
 \Spec \H^0 h(\cP_a(X)),   
\]
 so becomes a special case of our Hochschild homology construction for Tannakian duals.

In fact, we can say much more. Following \cite[\S 6]{esnaultlevine}, we define a cosimplicial scheme
$X^{[0,1]}$
by 
$(X^{[0,1]})^n= X^{\Delta^1_n} \cong X^{n+2}$.
The vertices of $\Delta^1$ give a cosimplicial map from  $X^{[0,1]}$ to the constant cosimplicial scheme $X^2$, with $\cP_a(X)$ the fibre over $(a,a)$. Now, observe that the ring of functions $\bD$ on the  space of nilpotent algebraic paths is just given by applying our chosen cohomology theory to the simplicial cohomological motive $\sD:=h(X^I/X^2)$ over $X^2$.  

\subsubsection{Relative Malcev homotopy types}\label{relmalmot}

Rather than just looking at nilpotent homotopy types, we could consider more general motivic homotopy types by choosing a set $S$ of  rigid cohomological motives over $X$, the nilpotent case being $S= \{h(X/X)\}$. Taking $\cT$ to be the full dg category of $E_X$-modules on objects $E_X(M)$ for $M \in S$, 
we find that the universal coalgebra $\bD$ (thought of as the sheaf of functions on the  space of  algebraic paths generated by $S$) is the normalised total complex of the simplicial diagram given in level $n$ by 
\begin{align*}
 & \uline{\CCC}_n(\cT, (\pr_1^{-1}\iota_Xh_{\cT}^{\op})\ten (\pr_2^{-1}\iota_Xh_{\cT}) )\cong\\
 &\bigoplus_{M_0,\ldots, M_n \in S} \pr_1^{-1} E_X(M_0^{\vee}) \ten_kE(M_0\ten_XM_1^{\vee})\ten_k \ldots \ten_k E(M_{n-1}\ten_XM_n^{\vee})\ten_k\pr_2^{-1}E_X(M_n).       
\end{align*}
Here $M^{\vee}$ denotes the dual motive to $M$ over $X$, which is just $M(-d)[-2d]$ when $M=h(Y/X)$ is the motive  of a smooth and proper morphism $Y \to X$ of relative dimension $d$. We write $\ten_X$ for the derived  tensor product of motives over $X$ (i.e. with respect to $k_X:= h(X/X)$), and we set $E(N):= \Gamma(X, E_X(N))$.

Beware that  the duals and tensor products in this expression are only defined up to homotopy, so we have only described $\bD$ as a coalgebra in the derived category of $E$-modules   over $X \by X$, with respect to the tensor product 
\[
 (F,G) \mapsto \pr_{13*}((\pr_{12}^*F)\ten_{E_{X^3}}(\pr_{23}^*G)).
\]

Now, $\bD$ arises by applying $E$ to the  simplicial cohomological motive $\sD$ over $X^2$ given by
\[
 n \mapsto \bigoplus_{M_0, \ldots, M_n \in S} M_0^{\vee} \ten_k (M_0\ten_XM_1^{\vee}) \ten_k  \ldots \ten_k (M_{n-1}\ten_XM_n^{\vee}) \ten_k M_n,       
\]
 where the  $h(X^2)$-module structure  comes from the $h(X)$-module structures of  $M_0^{\vee}$ and $M_n$. 
When the set $S$ is closed under the  tensor product $\ten_X$, the universal coalgebra $\sD$ becomes a  $\ten_X$-bialgebra over $X^2$. 

To understand the relation between $\sD$ and the universal coalgebras of \S \ref{univcoalg}, observe that the six functors formalism of \cite{ayoub6OpsII} 
makes $\cM_{\bA^1}(X)$ a category enriched in $\cM_{\bA^1}(F)$ and linear over it. The $\ten_X$-coalgebra $\sD$ on $X^2$ is then  a resolution of the enriched $\Hom$ functor on objects in $S$ given by $(N,M) \mapsto \oR f_*(M\ten_XN^{\vee})$,  
for $f \co X \to \Spec F$. This construction is thus the direct generalisation of \S \ref{univcoalg} to enriched categories.

At a basepoint $a \in X$, the $E$-Malcev homotopy type of $(X,a)$ relative to $S$  is the dg coalgebra $C:= (a,a)^*\bD$, which just comes from applying $E$ to the simplicial cohomological $F$-motive $(a,a)^*\sD$ given by 
\[
 n \mapsto \bigoplus_{M_0, \ldots, M_n \in S} (M_0^{\vee})_a \ten_k (M_0\ten_XM_1^{\vee}) \ten_k  \ldots \ten_k (M_{n-1}\ten_XM_n^{\vee}) \ten_k (M_n)_a.
\]
In other words, we should think of $\sD$ as the motive of $\cM_{\bA^1}(F)$-valued functions on the  space of  algebraic paths generated by $S$. 
At any basepoint $a$, the motive $(a,a)^*\sD$ is then the geometric motivic homotopy type of $(X,a)$ relative to $S$. Note that the arithmetic homotopy type would replace the motive $(M_{i-1}\ten_XM_i^{\vee}) $ with its motivic cohomology complex.

As in Example \ref{HHcoalgex}, the motive $L:= \bigoplus_{M \in S} M^{\vee}\ten_kM$ is a $\ten_X$-coalgebra over $X^2$. Then $\sD_n = \underbrace{L\ten_XL \ten_X \ldots \ten_XL}_{n+1}$, so $\sD$ is just the \v Cech nerve of the comonoid $L$. Setting $S= \{k_X\}$ (the nilpotent case), we get $L= k_{X^2}=h(X^2/X^2)$ and recover the description $\sD= h(X^I/X^2)$ of \S \ref{nilpmot}.

For rigid motives $P,Q \in \cM_{\bA^1}(X)^{\op}$, we have
\begin{align*}
& \oR\HHom_{ \cM_{\bA^1}^{\op}(X\by X)}( \pr_1^*P^{\vee}\ten_{X \by X} \pr_2^*Q , \sD) \simeq \oR\HHom_{ \cM_{\bA^1}^{\op}(X)}(P,Q) \\
&\simeq\oR\HHom_{ \cM_{\bA^1}^{\op}(X\by X)}( \pr_1^*P^{\vee}\ten_{X \by X} \pr_2^*Q , h(X/X^2)),
\end{align*}
where the morphism $X \to X^2$ is the diagonal map. 
Thus the universal coalgebra $\sD$ is just the universal motive under $h(X/X^2)$ generated by motives in $S$. Since duals and tensor products are here only defined up to homotopy, we should perhaps think of  $h(X/X^2)$ (or at least its induced dg functor on rigid motives)  as the fundamental object. 

When $S$ is the set of all rigid motives and we have a basepoint $a \in X$, $a^*\sD \in \cD_{\bA^1}(F)$ is just Ayoub's motivic Hopf algebra 
from  \cite[\S 2.4]{ayoubGaloisMotivic2}.

\bibliographystyle{alphanum}
\bibliography{references.bib}
\end{document}